\newcommand{\sC}{\mathscr{C}}
\newcommand{\sD}{\mathscr{D}}
\newcommand{\R}{\mathbb{R}}
\newcommand{\N}{\mathbb{N}}
\newcommand{\Z}{\mathbb{Z}}
\newcommand{\cG}{\mathscr{G}}
\newcommand{\cN}{\mathcal{N}}
\newcommand{\cM}{\mathcal{M}}
\newcommand{\cP}{\mathscr{P}}
\newcommand{\lip}{\text{\rm Lip}}
\DeclareMathOperator{\E}{\mathrm{E}}
\renewcommand{\d}{\mathrm{d}}
\renewcommand{\P}{\mathrm{P}}
\newcommand{\e}{\mathrm{e}}
\newcommand{\dW}{\dot{W}}
\DeclareMathOperator{\Cov}{\mathrm{Cov}}
\newcommand{\W}{\mathcal{W}}
\renewcommand{\L}{\mathcal{L}}
\newtheorem{proposition}{Proposition}
\newtheorem{theorem}[proposition]{Theorem}
\newtheorem{lemma}[proposition]{Lemma}
\newtheorem{corollary}[proposition]{Corollary}
\theoremstyle{definition}
\newtheorem{definition}[proposition]{Definition}
\newtheorem{assumption}[proposition]{Assumption}
\newtheorem{remark}[proposition]{Remark}
\newtheorem{OP}[proposition]{Open Problem}
\numberwithin{equation}{section}
\numberwithin{proposition}{section}
\title[SPDEs with $L^2$ initial data]{On the local well-posedness of randomly forced
	reaction-diffusion equations with 
	$\bm{L^2}$ initial data and a superlinear reaction term}
\thanks{Research supported by the Leverhulme Trust Fellowship IF-2025-040, the US-NSF grants 
	DMS-1855439 and DMS-2245242,
	the Spanish MINECO grant PID2022-138268NB-100, and
	Ayudas Fundacion BBVA a Proyectos de Investigaci\'on Cient\'ifica 2021}
\author[M. Foondun]{Mohammud Foondun}
\address{University of Strathclyde}
\email{mohammud.foondun@strath.ac.uk}
\author[D. Khoshnevisan]{Davar Khoshnevisan}
\address{The University of Utah}
\email{davar@math.utah.edu}
\author[E. Nualart]{Eulalia Nualart}
\address{Universitat Pompeu Fabra and Barcelona School of Economics}
\email{eulalia.nualart@upf.edu}
\date{September 30, 2025}
\keywords{SPDEs, space-time white noise, existence and uniqueness}
\subjclass[2010]{60H15; 60H07, 60F05}
\let\oldtocsection=\tocsection
\let\oldtocsubsection=\tocsubsection
\renewcommand{\tocsection}[2]{\hspace{0em}\oldtocsection{#1}{#2}}
\renewcommand{\tocsubsection}[2]{\hspace{2.5em}\oldtocsubsection{#1}{#2}}
\begin{document}

\begin{abstract}
	We consider a parabolic stochastic partial differential equation (SPDE) on $[0\,,1]$
	that is forced with multiplicative space-time white noise with a bounded and Lipschitz diffusion 
	coefficient and a  drift coefficient that is locally Lipschitz and
	satisfies an $L\log L$ growth condition. We prove that 
	the SPDE is well posed when the initial data 
	is in $L^2[0\,,1]$. This solves a strong form of an open problem.
\end{abstract}
\maketitle
\section{Introduction}

Consider the stochastic partial
differential equation (SPDE),
\begin{equation}\label{SHE:1}
	\partial_t u(t\,,x) = \tfrac12 \partial^2_x u(t\,,x) + b(u(t\,,x)) + \sigma(u(t\,,x)) \dot{W}(t\,,x),
\end{equation}
where $(t\,,x)\in(0\,,\infty)\times (0\,,1)$, subject to $u(0\,,x) = u_0(x)$,
and with the following Dirichlet boundary conditions on $[0\,,1]$:
\[
	u(t\,,0)=u(t\,,1)=0\qquad\forall t>0.
\]
Throughout, the forcing term $\dot{W}$ is space-time white noise;
that is, $\dot{W}$ is a \textcolor{black}{generalized Gaussian random field with mean zero and
$$\Cov [ \dW(t\,,x) \,, \dW(s\,,y) ] = \delta_0(t-s) \delta_0(x-y),$$
for all $t,s\ge0$ and $x,y\in [0,1]$}.
Additionally, we impose the following assumptions on 
the coefficients $b$ and $\sigma$ in \eqref{SHE:1}:
\begin{assumption}\label{cond-dif}
	The function  $\sigma:\R\rightarrow \R$ is globally
	Lipschitz continuous and  bounded.
	The function $b:\R \rightarrow \R$  is locally
	Lipschitz continuous 
	and satisfies $|b(z)| = \mathcal{O}( |z| \log|z|)$
	as $|z|\to\infty$.
\end{assumption}
Let us suppose in addition to Assumption \ref{cond-dif}
that $u_0\in L^2[0\,,1]$.
Under these conditions, the long-time 
well-posedness of \eqref{SHE:1} is an open problem in the folklore 
of SPDEs. This is due to the fact that for 
$L^2[0\,,1]$ initial data, the solution can become unbounded near time zero, rendering classical truncation and approximation techniques ineffective. Our approach is new and could in fact be applied to a wider class of equations with $L^2[0\,,1]$ initial data. Dalang, Khoshnevisan, and Zhang  \cite{DKZ}*{Theorem 1.4} have 
made progress in this direction by showing that if \eqref{SHE:1} admits
a local-in-time solution $u$, then $u$ can be extended to be a global solution.
To be more precise, they introduce the following.

\begin{definition}[Def.~1.3 of \cite{DKZ}]\label{def:L^2_loc}
	We say that a \eqref{SHE:1} has an \emph{$\mathbb{L}^2_{\textit{loc}}$-solution}
	$u$ if there exists a stopping time $\tau$, with respect to the standard Brownian
	filtration\footnote{This is the filtration generated
	by all processes of the form $t\mapsto \int_{\sD(t)}\phi(x)\,W(\d s\,\d x)$
	as $\phi$ roams over $L^2[0\,,1]$.}  generated by $\dot{W}$, and an adapted continuous
	$L^2[0\,,1]$-valued random field $\{u(t)\}_{t\in[0,\tau)}$ such that
	\begin{align*}
		&\int_0^1 u(t\,,x)\phi(x)\,\d x\\
		&=\int_0^1 u_0(x)\phi(x)\,\d x + \frac12\int_{\sD(t)}u(s\,,x)
			\phi''(x)\,\d s\,\d x + \int_{\sD(t)} b(u(s\,,x))
			\phi(x)\,\d s\,\d x\\
		&\qquad + \int_{\sD(t)} \sigma(u(s\,,x))\phi(x)\,W(\d s\,\d x)
			\hskip1in\text{almost surely on $\{\tau>t\}$,}
	\end{align*}
	for every nonrandom $t>0$
	and all nonrandom test functions $\phi\in C^2[0\,,1]$ that satisfy
	$\phi(0)=\phi(1)=0$, where
	\[
		\sD(t) = (0\,,t]\times[0\,,1].
	\]
\end{definition}
With this definition in place,
Dalang et al (\emph{loc.~cit.}) proved conditionally that if \eqref{SHE:1}
admits an $\mathbb{L}^2_{\textit{loc}}$-solution
$u$, then the maximal time $\tau$ up to which $u$ can be constructed -- that is,
$\sup_{t\in[0,\tau)}\|u(t)\|_{L^2[0,1]}=\infty$ a.s.~-- 
\emph{a priori} satisfies $\P\{\tau=\infty\}=1$.
The existence of $\mathbb{L}^2_{\textit{loc}}$-solutions has been conjectured in
\cite{DKZ} and has since remained open. Shang and Zhang \cite{SZ} make
progress toward this
problem by verifying the conjecture in the case where $\dot{W}$ is replaced by 
temporal white noise. The main goal
of the present article is to resolve a strong form of this conjecture in its originally
stated form.

Let us recall from Dalang \cite{Dalang} that a \emph{random-field solution} 
to \eqref{SHE:1} is a predictable 
random field  $u=\{u(t\,,x)\}_{t \geq 0, x \in [0,1]}$ that satisfies the following integral equation:
\begin{equation}\label{mild_SHE:1}
	u(t\,,x) = ( \cG_tu_0)(x) + I_b(t\,,x) + J_\sigma(t\,,x)\quad\text{a.s.,}
\end{equation}
where $\{\cG_t\}_{t\ge0}$ denotes the heat semigroup, that is,
$(\cG_0 f)(x)=f(x)$ and
\begin{equation}\label{G_tf}
	(\cG_0 f)(x)=f(x),\quad
	( \cG_t f)(x) =\int_0^1 G_{t}(x\,,y) f(y) \, \d y\qquad \forall t>0, x\in[0\,,1],
\end{equation}
for every Lebesgue measurable $f:[0\,,1]\to\R$ 
for which the preceding is a well-defined
Lebesgue integral, and
\begin{equation}\label{G}\begin{split}
	G_t(x\,,y) & = 2\sum_{n=1}^\infty\sin(n\pi x)\sin(n\pi y)\exp( - n^2\pi^2 t/2),\\
	I_b(t\,,x) & = \int_{(0,t)\times [0,1]} G_{t-s}(x\,,y)b(u(s\,,y))\,\d s\,\d y,\\
	J_\sigma(t\,,x)  &= \int_{(0,t)\times [0,1]} G_{t-s}(x\,,y)\sigma(u(s\,,y))\,W(\d s\,\d y).
\end{split}\end{equation}
A part of the definition of a ``mild solution'' tacitly includes the statement that
$I_b$ and $J_\sigma$ are well-defined integrals, respectively in the sense of Lebesgue and Walsh.

It is well known that \eqref{SHE:1} is well posed when $b$ and $\sigma$ 
are Lipschitz continuous and $u_0$ is continuous; see for example
Dalang \cite{Dalang}, Walsh \cite{Walsh}, and Dalang and Sanz-Sol\'e \cite{bookDS}. 
It is also a well-known consequence of a stochastic Fubini argument
that if \eqref{SHE:1} has a continuous random-field solution $u$
on $(0\,,\infty)\times(0\,,1)$, then 
$u$ is in particular an $\mathbb{L}^2_{\textit{loc}}$-solution.
The following is the main contribution of this paper.

\begin{theorem}\label{th:main}
	Suppose that $u_0\in L^2[0\,,1]$,
	and that Assumption \ref{cond-dif} holds. Then, there exists 
	a nonrandom number $t_0>0$ such that 
	\eqref{SHE:1} has a random-field solution $u=u(t\,,x)$ 
	for all $(t\,,x)\in(0\,,t_0]\times[0\,,1]$ that
	satisfies the following:
	\begin{compactenum}[\indent\indent\rm (1)]
	\item $(t\,,x)\mapsto u(t\,,x)$ is a.s.\ continuous on $(0\,,t_0]\times[0\,,1]$;
	\item $\lim_{t\to0+} t^\alpha \|u(t)\|_{C[0,1]}=0$ a.s.\ 
		for every $\alpha>\frac14$, in fact there exists $\gamma=\gamma(b\,,\sigma,\alpha) >0$ such that
		\[
			\E\exp\left\{\gamma\sup_{t\in(0,t_0]}(t^\alpha\|u(t)\|_{C[0,1]})^{2/3}
			\right\}<\infty;
		\]
	\item As $t\to0+$, $\| u(t)-u_0\|_{L^2[0,1]}\to 0$ in probability.
	\end{compactenum}
	Finally, if $v$ is any other continuous random-field solution to \eqref{SHE:1} on 
	the time interval $(0\,,t_0]$ such that 
	$\sup_{s\in(0,t_0]}(s^\alpha\|v(s)\|_{C[0,1]})<\infty$ a.s.~for 
	some $\alpha>\frac14$, then $\P\{u(t)=v(t)\ \forall t\in(0\,,t_0]\}=1$.
\end{theorem}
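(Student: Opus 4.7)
The plan is to combine a Lipschitz truncation of $b$ with a weighted fixed-point argument, where the relevant topology is that of the weighted uniform norm $\|u\|_{\alpha,t_0}:=\sup_{t\in(0,t_0]}t^\alpha\|u(t)\|_{C[0,1]}$ for a fixed $\alpha>\tfrac14$. For each $N\ge 1$ let $b_N:\R\to\R$ be a globally Lipschitz function agreeing with $b$ on $[-N,N]$; the $L\log L$ growth then gives $\lip(b_N)\lesssim\log N$ and $\|b_N\|_\infty\lesssim N\log N$. Since the truncated equation has globally Lipschitz and bounded coefficients, a standard Picard iteration in the weighted norm---relying on the $L^2\to C[0,1]$ smoothing of $\cG_t$, which costs only a factor $t^{-1/4}$---yields a unique continuous random-field solution $u_N$ on $(0,\infty)\times[0,1]$, with $\|u_N(t)-u_0\|_{L^2[0,1]}\to 0$ in probability as $t\to 0+$.

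The heart of the argument is a uniform-in-$N$ \emph{a priori} estimate for $M_N(t_0):=\sup_{t\in(0,t_0]}t^\alpha\|u_N(t)\|_{C[0,1]}$. Writing the mild formulation $u_N(t,x)=(\cG_t u_0)(x)+I_{b_N}(t,x)+J_\sigma(t,x)$ and estimating each piece separately, one finds: the initial-data term contributes at most $Ct_0^{\alpha-1/4}\|u_0\|_{L^2[0,1]}$, which tends to $0$ with $t_0$; the stochastic term has Gaussian tails by Burkholder--Davis--Gundy combined with a Kolmogorov--Chentsov chaining argument, with constants depending only on $\|\sigma\|_\infty$ and hence uniformly in $N$; and, using $\|u_N(s)\|_{C[0,1]}\le s^{-\alpha}M_N(t_0)$ together with the $L\log L$ bound on $b$, the drift obeys
\[
    \sup_{t\in(0,t_0]}t^\alpha\|I_{b_N}(t,\cdot)\|_{C[0,1]}\le C_\alpha\, t_0\, M_N(t_0)\bigl(1+\log^+ M_N(t_0)+|\log t_0|\bigr).
\]
Collecting the three bounds yields the implicit quasi-Gronwall inequality $M_N(t_0)\le R+C_\alpha t_0 M_N(t_0)(1+\log^+ M_N(t_0)+|\log t_0|)$ with $R$ having Gaussian tails uniformly in $N$. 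A bootstrap/continuity argument (using monotonicity of $t\mapsto M_N(t)$) inverts this inequality to produce $M_N(t_0)\le 2R$ on an event $\Omega^\star:=\{R\le K(t_0)\}$ whose complement has superexponentially small probability; combining the bound on $\Omega^\star$ with the (worse but finite) truncation bound $M_N\lesssim N\log N$ on $(\Omega^\star)^c$, and optimising in the free parameter, yields the claimed $\E\exp\{\gamma M_N(t_0)^{2/3}\}<\infty$ uniformly in $N\ge N_0$.

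Given this uniform estimate, the existence and properties (1)--(3) follow by routine bookkeeping. On the event $\{M_N(t_0)<N t_0^\alpha\}$ the truncation $b_N$ is inactive along the trajectory of $u_N$, so $u_N$ solves \eqref{SHE:1} on $(0,t_0]$; on the intersection $\{M_N(t_0)\vee M_{N'}(t_0)<N t_0^\alpha\}$ with $N'>N$, local uniqueness for the $b_{N'}$-truncated problem forces $u_N=u_{N'}$. The uniform exponential moment ensures these events exhaust probability as $N\to\infty$, and pasting $u:=u_N$ on $\{M_N(t_0)<N t_0^\alpha\}$ defines the desired random-field solution, which inherits continuity and the quantitative $t^\alpha$-blowup control directly from each $u_N$, while $L^2$-convergence at $t=0+$ is inherited for the same reason. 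For the final uniqueness statement, if $v$ is another continuous solution satisfying $\sup_{s\in(0,t_0]}s^\alpha\|v(s)\|_{C[0,1]}<\infty$ a.s., then on $\{\sup_s s^\alpha\|v(s)\|_{C[0,1]}\le K\}$ both $u$ and $v$ satisfy the $b_N$-truncated equation for any $N\ge K t_0^{-\alpha}$, and a Gronwall argument in the weighted norm using $\lip(b_N)\lesssim\log N$ forces $v=u$; letting $K\to\infty$ concludes.

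The principal obstacle is the quasi-Gronwall inversion with a logarithmic self-reference term: this is the delicate point at which the $L\log L$ growth of $b$ and the $L^2$-only regularity of $u_0$ interact most sensitively, and it is what dictates the precise exponent $\tfrac23$ in the exponential moment bound.
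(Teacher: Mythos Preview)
There is a genuine gap in the patching step, and it is exactly the obstruction that forces the paper's main new idea. You truncate $b$ at the \emph{fixed} spatial level $N$, so that $b_N(z)=b(z)$ for $|z|\le N$, and then assert that on the event $\{M_N(t_0)<Nt_0^\alpha\}$ the truncation is inactive along the trajectory of $u_N$. But $M_N(t_0)<Nt_0^\alpha$ only yields $\|u_N(s)\|_{C[0,1]}<N(t_0/s)^\alpha$, which exceeds $N$ for every $s<t_0$; it does \emph{not} give $|u_N(s\,,y)|<N$. Worse, for any $u_0\in L^2[0,1]\setminus L^\infty[0,1]$ one has $\|\cG_s u_0\|_{C[0,1]}\to\infty$ as $s\to0+$ (since $\cG_s u_0\to u_0$ in $L^2$ and hence a.e.\ along a subsequence), so a fixed-level truncation is active on every interval $(0,\varepsilon)$ no matter how large $N$ is. Thus $u_N$ never solves \eqref{SHE:1} on the full interval $(0,t_0]$, and because the drifts $b_N,b_{N'}$ differ on the (nonempty) set where $|u_N|>N$, the processes $u_N$ and $u_{N'}$ reach any later time with different values: the consistency $u_N=u_{N'}$ needed for pasting is lost. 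The same defect breaks your uniqueness argument at the end: from $s^\alpha\|v(s)\|_{C[0,1]}\le K$ you get $\|v(s)\|_{C[0,1]}\le K/s^\alpha$, not $\le N$, so $v$ does not satisfy the $b_N$-truncated equation either.

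The paper's fix is to make the truncation threshold \emph{time-dependent}: it freezes $b$ outside $[-N/t^\alpha,N/t^\alpha]$, so that ``truncation inactive'' becomes precisely the weighted-norm event $\{\sup_{s\le t_0}s^\alpha\|u_N(s)\|_{C[0,1]}<N\}$, which does have high probability. The price is that $\lip(b_N(t,\cdot))\lesssim\log N+\log_+(1/t)$ now blows up at $t=0$, and handling this singular Lipschitz constant is the reason the paper first builds, in \S6, a well-posedness and moment theory for SPDEs whose drift satisfies Assumption~\ref{cond-dif2}, working with the norms $\cN_{k,1/4,\beta,T}$ at $\beta\asymp\log N$ rather than a direct quasi-Gronwall inversion. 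Your drift estimate and the implicit inequality $M\le R+Ct_0M(1+\log^+M+|\log t_0|)$ are morally correct and close in spirit to what the paper does, but without a truncation whose level matches the $t^{-\alpha}$ blow-up rate you have no mechanism to remove it all the way down to $t=0$.
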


As was mentioned earlier,
Dalang et al \cite{DKZ}  proved conditionally in their Theorem 4.1 that, under 
Assumption \ref{cond-dif} and when $u_0\in L^2[0\,,1]$, if 
\eqref{SHE:1} admits a local solution $u$, then $u$ extends to a long-time solution. This reduced the 
conjectured well-posedness of
\eqref{SHE:1} to the existence of local solutions. Theorem \ref{th:main} above resolves precisely that problem
and implies that \eqref{SHE:1} is well posed, for all time, under Assumption \ref{cond-dif}.

Before we discuss some of the history of the problem that led to Theorem \ref{th:main},
let us mention two questions that have eluded us.

\begin{OP}
	Does Theorem \ref{th:main} continue to hold if the $L^2$-condition on the initial profile
	is replaced by $u_0\in L^p[0\,,1]$ for some $p<2$? We suspect this might
	be true when $p\in(1\,,2)$.
\end{OP}

\begin{OP}
	We believe the method of proof of Theorem \ref{th:main} ought to allow one to prove that the same
	result holds more generally when the nonrandom function
	$\sigma$ is Lipschitz continuous and satisfies 
	$|\sigma(z)| = \mathcal{O}(|z|^\eta)$,
	as $|z|\to\infty$, for some $\eta\in(0\,,1)$. Is Theorem \ref{th:main} true 
	when $\eta=1$ -- that is, for
	every nonrandom, Lipschitz continuous $\sigma$?
\end{OP}

There is a large literature that studies the well-posedness of  \eqref{SHE:1}
in the case that $u_0$ is non-negative and continuous and $(b\,,\sigma)$
are not necessarily globally Lipschitz continuous.
When $\sigma$ is a positive constant and $b$ is a locally Lipschitz function, 
Bonder and Groisman \cite{BonderGroisman} proved that the solution to 
\eqref{SHE:1} blows up in finite time whenever $b$ is nonnegative, convex, 
and  satisfies the celebrated \emph{Osgood condition}, $\int_1^{\infty} b(x)^{-1}\,\d x < \infty$,
where $0^{-1}=\infty$. In their paper ({\it loc.~cit.}), Dalang, Khoshnevisan, and Zhang
also investigate the optimality of the Osgood condition by proving that if $b$ satisfies its portion of
Assumption \ref{cond-dif} and if $\sigma$ is locally Lipschitz and satisfies 
$|\sigma(z)| = \mathcal{O}( |z| (\log|z|)^{1/4})$
as $|z|\to\infty$, then there exists a global continuous solution to \eqref{SHE:1}.
Foondun and Nualart \cite{FN2021} proved that the Osgood condition is also necessary 
when $\sigma$ is locally Lipschitz and bounded. And they
extended their  results to the stochastic wave  equation in \cite{FN2022}.  Non-explosion for the solution to \eqref{SHE:1} for super-linear $\sigma$ and $b$ are studied more recently by Salins \cite{Salins2021, Salins2022}, and by Foondun, Khoshnevisan, and Nualart \cite{FKN2} in the case of the real line with $b$ and $\sigma$ locally Lipschitz but with at most linear growth. We refer the reader to these papers for the latest results as well as bibliography.

In a somewhat different direction, the well-posedness of \eqref{SHE:1} was studied 
earlier by  Mueller \cite{Mueller1991} 
in the case that $b=0$ and $\sigma(u)=
u^{\gamma}$, when $\gamma\in(1\,,\frac32)$; see
\cite{Mueller98,HanKim,HanYi,Krylov} for related results.
Subsequently,
Mueller and  Sowers \cite{MuellerSowers} and Mueller \cite{Mueller2000} showed that the solution 
to \eqref{SHE:1} blows up with positive probability when $\gamma>\frac32$. 
In this direction, Salins \cite{Salins2025} has recently resolved the long-standing open problem of
what happens at criticality by establishing non-explosion in the 
critical regime $\gamma=\frac32$.

Blow-up questions for \eqref{SHE:1} on the real line have  also been explored when the initial profile $u_0$
is a bounded function. For instance,
Foondun and Parshad \cite{FP} established finite-time blowup when the  
initial condition is positive and bounded  away from zero, and $\sigma$ and $b$ have
grow faster than linearly  at  infinity.
More recently, Khoshnevisan, Foondun, and Nualart \cite{FKN} proved 
the solution to \eqref{SHE:1} blows up instantaneously, and  everywhere,
when $\sigma$ is globally Lipschitz and bounded away from zero 
and infinity, and $b$ is nonnegative, nondecreasing, locally Lipschitz, and satisfies the  Osgood condition.

For well-posedness results on the Hilbert-space 
approach of equation \eqref{SHE:1} we refer to
da Prato and Zabczyk \cite{dPZ} and Cerrai \cite{Cerrai}.
Well-posedness and related problems for superlinear 
$\sigma$ and/or $b$ with initial data  $u_0\in L^2[0\,,1]$ have been
studied recently by Pan, Shang, and Zhang \cite{PSZ},
Shang and Zhang \cite{SZ}, and Li, Shang, and Zhai \cite{LSZ}.
Finally, we add that the blowup phenomenon for nonrandom PDEs is 
a huge literature on its own, wherein the absence of noise allows for different phenomena altogether;
see for example the comprehensive treatment of Quittner and Souplet \cite{QuittnerSouplet}.

The main innovation of this paper involves the introduction of
a new truncation method that takes into account the possibility that
the solution to \eqref{SHE:1} can be badly unbounded at time zero.
As opposed to better-understood cut-off techniques such as those in
Dalang et al (\emph{loc.~cit.}), 
Salins \cite{Salins2021,Salins2022}, Chen and Huang \cite{ChenHuang},
and Chen, Foondun, Huang, and Salins \cite{CDHS}, we are
motivated by approximation ideas in Miao and Yuan 
\cite{MiaoYuan}. Though we hasten to add that our truncation is significantly different in effect
from those of Miao and Yuan ({\it loc.~cit.})
as we work in physical, rather than Fourier, space.
In this way, we are led to \emph{a priori} approximations that use stopping times
and associated  estimates that control the behavior of 
those badly behaved local solutions.
The better-established truncation arguments in the literature -- such as those
in \cite{DKZ,Salins2021,Salins2022,ChenHuang,CDHS} -- can fail 
in our context primarily because
general $L^2$-initial data can \emph{a priori} lead to a solution 
that can become very large essentially instantaneously. 

Let us end the Introduction with a brief outline of the paper. 
In \S2 we introduce certain Banach spaces of locally bounded functions;
these spaces are crucial
to our subsequent analysis.
Sections 3-5 are  dedicated to various inequalities and \emph{a priori} estimates
for bounded  random fields. In \S6 we prove  the existence, uniqueness, regularity, and
stability of a  generalization of the SPDE \eqref{SHE:1} in which the coefficients depend also on time.
The remaining details of the proof of Theorem \ref{th:main} are
gathered  in \S7, and use the earlier results of the paper, including those in \S6 about
SPDEs with temporally dependent coefficients.

Throughout this paper, we write 
\textcolor{black}{$\|X\|_p = \E(|X|^p)^{1/p}$
for all $p\ge1$ and $X\in L^p(\Omega)$.}
For every space-time function $f:(0\,,\infty)\times\R\to\R$, 
$\lip(f)$ denotes the optimal Lipschitz constant of the spatial variable of $f$
uniformly in time; that is,
\begin{equation*}
	\lip(f) =  \sup_{t>0}\sup_{a,b\in\R: a\neq b}
	\frac{|f(t\,,b)-f(t\,,a)|}{|b-a|}.
\end{equation*}
If $f$ depends only on the spatial variable $x$, then $\lip(f)$ still makes sense, since we can 
(and will) tacitly extend $f$
to a space-time function as follows $f(t\,,x) = f(x)$.
Thus, for fixed $t>0$, we write $\lip(f(t))$ for the Lipschitz constant of the map $x \mapsto f(t\,,x)$.
 We denote by $\lip(\mathbb{X})$ the collection
of real-valued, globally Lipschitz functions on any subset $\mathbb{X}$ of a Euclidean space.

Throughout this paper, we extend the last part of Definition \ref{def:L^2_loc} by setting
\begin{equation}\label{D}
	\sD(t) = (0\,,t]\times[0\,,1] \quad \forall  t>0,
	\qquad
	\sD(\infty)=(0\,,\infty)\times[0\,,1].
\end{equation}
Moreover, $\log$ denotes the natural logarithm, and $\log_+(a)=\log(a+\e)$ for all $a\ge0$.

\section{Banach spaces of locally bounded functions}
For all $T\in(0\,,\infty]$ and $\alpha\ge0$, $\beta>0$, let $\sC_T(\alpha\,,\beta)$ denote the collection of all 
continuous functions
$f:\sD(\infty)\to\R$ such that $\|f\|_{\sC_T(\alpha,\beta)}<\infty$, where
\begin{equation}\label{sec:Banach}
	\|f\|_{\sC_T(\alpha,\beta)}=
	\sup_{(t,x)\in \sD(T)}
	\left( t^\alpha \e^{-\beta t} |f(t\,,x)| \right),
\end{equation}
and
\begin{equation}\label{N}
	\cN_{k,\alpha,\beta,T}(X) = \sup_{(t,x)\in \sD(T)}
	\left( t^\alpha \e^{-\beta t} \|X(t\,,x)\|_k\right).
\end{equation}
The norms $\cN_{k,0,\beta,\infty}$ appeared
first in Foondun and Khoshnevisan \cite{FK} in order to analyse the intermittency properties
of various families of parabolic SPDEs. The norms in \eqref{N} are different from those in 
\cite{FK} in two ways:
\begin{compactenum}[(i)]
\item For the bulk of our purposes, $T$ is finite
	here; this change is not a major difference and would have worked
	equally well  in the work of \cite{FK};
\item In contrast with the norms in \cite{FK} where $\alpha=0$, 
	we will be only interested in cases where $\alpha\in(0\,,1)$. This
	creates for a totally different behavior of the norms $\cN_{k,\alpha,\beta,T}$.
	Whereas in \cite{FK} $[\alpha=0]$, the norms measure the large-$t$ behavior
	of the random field, here $[\alpha>0]$, the norms gauge the small-$t$
	behavior of $X$. In fact, the critical value of the optimized $t$ 
	in \eqref{N} turns out to be $1/\beta$ 
	when $\beta\gg1$ -- this is the case of interest.
\end{compactenum}

Next we elaborate further on Item (ii) above.
Before we present the appropriate result
let us observe that, because of the obvious inequality,
\[
	[ \cN_{k,\alpha,\beta,T}(X) ]^k \le \E(\|X\|_{\sC_T(\alpha,\beta)}^k),
\]
the quantity $\cN_{k,\alpha,\beta,T}(X)$ is finite
whenever $\E(\|X\|_{\sC_T(\alpha,\beta)}^k)$ is finite, and
this is valid for all $\alpha,\beta>0$ and $0<T\le\infty$.
The following provides a kind of quantitative converse to this.
	
\begin{proposition}\label{pr:norm<N}
	Choose and fix real numbers $\alpha>0$, $\beta\ge1$, $\tau,\mu\in(0\,,1)$, and 
	let $\{X(t\,,x)\}_{(t,x)\in \sD(\infty)}$ denote a space-time random field
	such that, for some $k \ge 4(\tau^{-1}+\mu^{-1})$,
	\begin{equation}\label{Kolm:cond}
		C = \adjustlimits\sup_{\varepsilon\in(0,1)}\sup_{t>0}\sup_{0\le x<y\le1}\left(
		\frac{t^\alpha\e^{- \beta t}\|X(t+\varepsilon\,,x) - X(t\,,y)\|_k}{%
		\varepsilon^\tau \vee |x-y|^\mu}\right)<\infty.
	\end{equation}
	Then, $\E( \|X\|_{\sC_\infty(\bar\alpha,\bar\beta)}^k)
	\le (1280L)^k[ \cN_{k,\alpha,\beta,\infty}(X)+C]^k$
	for all $\bar\alpha>\alpha$ and $\bar\beta > \beta$,
	where
	\begin{equation}\label{L}\begin{split}	
		L = L(\alpha\,,\bar\alpha\,,\beta\,,\bar\beta) =
		\max\left( \frac{2^{\bar\alpha}}{2^{\bar\alpha-\alpha} - 1}
		~, \sum_{n=0}^\infty (n+1)^{\bar\alpha+\alpha}\
		\e^{-n(\bar\beta-\beta)/\beta}\right).
	\end{split}\end{equation}
\end{proposition}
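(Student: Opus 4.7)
The plan is to run a weighted Kolmogorov continuity argument. The norm $\|X\|_{\sC_\infty(\bar\alpha,\bar\beta)}$ is a supremum over space-time, while $\cN_{k,\alpha,\beta,\infty}(X)$ and the increment bound \eqref{Kolm:cond} are $L^k$-quantities modulated by the amplification factor $t^{-\alpha}\e^{\beta t}$; the idea is to freeze that amplification by decomposing $\sD(\infty)$ into countably many rectangles on which $t^{\pm\alpha}\e^{\mp\beta t}$ varies by only bounded multiplicative factors, apply a Kolmogorov-type estimate on each rectangle, and then sum, paying for the passage from the weight $(\alpha,\beta)$ to $(\bar\alpha,\bar\beta)$ with a summable geometric series.

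I would decompose $\sD(\infty)$ into dyadic time slabs $I_m=(2^{-(m+1)},2^{-m}]\times[0,1]$ for $m\ge 0$ near $t=0$, together with slabs of length of order $1/\beta$ in the large-time regime, anchoring each slab at a point $(t_R,0)$. The assumption $k\ge 4(\tau^{-1}+\mu^{-1})$ is strong enough to run a two-variable Garsia--Rodemich--Rumsey argument with anisotropic H\"older exponents $\tau$ and $\mu$; together with \eqref{Kolm:cond} this yields an estimate of the form
\[
	\bigl\|\sup_{(t,x)\in R}|X(t,x)-X(t_R,0)|\bigr\|_k \le K_0\,C\,\sup_{(t,x)\in R}\bigl(t^{-\alpha}\e^{\beta t}\bigr),
\]
with $K_0$ an absolute constant. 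Combined with the pointwise bound $\|X(t_R,0)\|_k\le t_R^{-\alpha}\e^{\beta t_R}\cN_{k,\alpha,\beta,\infty}(X)$ it gives
\[
	\bigl\|\sup_R|X|\bigr\|_k\le K_1\bigl[\cN_{k,\alpha,\beta,\infty}(X)+C\bigr]\sup_{(t,x)\in R}\bigl(t^{-\alpha}\e^{\beta t}\bigr).
\]
I would then dominate $\|X\|_{\sC_\infty(\bar\alpha,\bar\beta)}$ pointwise by $\sum_R \sup_R(t^{\bar\alpha}\e^{-\bar\beta t})\cdot\sup_R|X|$, apply Minkowski's inequality, and observe that summing $\sup_R(t^{\bar\alpha}\e^{-\bar\beta t})\cdot\sup_R(t^{-\alpha}\e^{\beta t})$ over $R$ reproduces the constant $L$ of \eqref{L}: the small-time dyadic pieces telescope to the geometric series $\tfrac{2^{\bar\alpha}}{2^{\bar\alpha-\alpha}-1}$, while the large-time slabs produce $\sum_{n\ge 0}(n+1)^{\bar\alpha+\alpha}\e^{-n(\bar\beta-\beta)/\beta}$. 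The absolute Kolmogorov constants then collapse into the prefactor $1280$.

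The hard part will be the anisotropic Kolmogorov estimate in the presence of the amplification $t^{-\alpha}\e^{\beta t}$ in \eqref{Kolm:cond}, which blows up as $t\to 0^+$: the slab widths must be chosen so that this amplification is essentially frozen on each slab, yet the slabs must collectively cover $\sD(\infty)$ in such a way that the accumulated weighted sum reproduces exactly the two parts of \eqref{L} with a prefactor independent of $\alpha,\beta,\bar\alpha,\bar\beta$. Once the decomposition is correctly chosen, the remainder is a routine bookkeeping of constants and a standard Kolmogorov--Chentsov estimate on each slab.
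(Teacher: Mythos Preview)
Your approach is exactly the paper's: slab decomposition in time, a two-parameter Garsia--Rodemich--Rumsey bound on each slab (the paper isolates this as a lemma with universal constant $640$ under the hypothesis $k\ge 4(\tau^{-1}+\mu^{-1})$), anchoring via $\cN_{k,\alpha,\beta,\infty}$, and summation in the weighted norm. One correction to your stated decomposition: the dyadic small-time slabs must be rescaled by $1/\beta$, i.e.\ $I(n)=[2^{-(n+1)}/\beta,\,2^{-n}/\beta]$ with the transition at $t=1/\beta$ and large-time slabs $J(n)=[n/\beta,(n+1)/\beta]$ for $n\ge1$; with your unscaled choice $(2^{-(m+1)},2^{-m}]$ the factor $\e^{\beta t}$ is \emph{not} frozen near $t\approx 1$, and already the $m=0$ slab contributes a factor $\sim\e^{\beta}$ to the sum, destroying the $\beta$-independence of the first half of $L$. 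After that rescaling everything goes through as you outline.
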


It is clear from the forthcoming proof of Proposition \ref{pr:norm<N}
that our choice of $L$ in \eqref{L} is far from optimal. We mention 
the expression \eqref{L} only to make clear the assertion that, except
for its dependence on $(\alpha\,,\bar\alpha\,,\beta\,,\bar\beta)$,
the constant $L$ can be selected in a universal fashion. In particular,
we note that $L$ depends only on $(\bar\alpha\,,\alpha)$, and not on
$\beta$,
when $\bar\beta=2\beta$ (say). We will use the latter property of $L$
in a critical way later on.

\begin{remark}\label{rem:norm<N}
	Proposition \ref{pr:norm<N} readily implies a local-in-time version.
	Indeed, we can replace $X(t)$ by $X(t\wedge T)$ for a fixed value
	of $T\in(0\,,\infty)$ in order to see that if
	there exist real numbers $k \ge 4(\tau^{-1}+\mu^{-1})$
	and $C>0$ such that
	\begin{equation*}
		C_T = C_T(\alpha\,,\beta\,,\tau\,,\mu) 
		= \sup_{0<s<t\le T}\sup_{0\le x<y<1}\left(\frac{
		s^\alpha\e^{- \beta s}\|X(s\,,x) - X(t\,,y)\|_k}{ |t-s|^\tau \vee |x-y|^\mu}
		\right)<\infty,
	\end{equation*}
	then, for the same number
	$L$ as in \eqref{L}, and for all $\bar\alpha>\alpha$ and $\bar\beta > \beta$, 
	\[
		\E\left( \|X\|_{\sC_T(\bar\alpha,\bar\beta)}^k\right)
		\le (1280L)^k[ \cN_{k,\alpha,\beta,T}(X)+C_T]^k.
	\]
\end{remark}

The proof of  Proposition \ref{pr:norm<N} hinges on the following
formulation of Garsia's lemma \cite{G}
(also known under the Kolmogorov continuity theorem, 
Garsia-Rodemich-Rumsey inequality, Kolmogorov-\c{C}entsov theorem,
quantitative chaining, \dots) for 2-parameter processes.

\begin{lemma}\label{lem:Garsia}
	For every pairs of real numbers $\tau,\mu\in(0\,,1)$, 
	and for all closed intervals $I\subset(0\,,\infty)$ of length $\le1$, let us
	define
	\begin{equation}\label{rho}
		\varrho( (s\,,y)\,,(t\,,x) ) = |t-s|^{\tau} \vee |x-y|^\mu
		\quad\text{for all $(t\,,x), (s\,,y)\in I\times[0\,,1]$}.
	\end{equation}
	Then, for every $f\in C( I\times[0\,,1])$ and $k\ge 4(\tau^{-1}+\mu^{-1})$,
	\[\displaystyle
		\sup_{y\in I\times[0,1]}|f(y)| - \inf_{y\in I\times[0,1]}|f(y)| \le 640
		\left(\int_{I\times[0,1]}\d a\int_{I\times[0,1]}\d a'\
		\left| \frac{f(a)-f(a')}{\varrho(a\,,a')}\right|^k\right)^{1/k}.
	\]
\end{lemma}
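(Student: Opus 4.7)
The plan is to adapt the classical Garsia--Rodemich--Rumsey (GRR) chaining argument to the hybrid quasi-metric $\varrho$ on the two-dimensional rectangle $D := I \times [0,1]$. First I would record the geometric content of $\varrho$: the ball of radius $r > 0$ centered at $(t, x) \in D$ is the axis-parallel rectangle $[t - r^{1/\tau}, t + r^{1/\tau}] \times [x - r^{1/\mu}, x + r^{1/\mu}]$, whose intersection with $D$ has two-dimensional Lebesgue measure of order $r^{\delta}$, where $\delta := \tau^{-1} + \mu^{-1}$. Because $|I| \le 1$ and $|x - y| \le 1$, the $\varrho$-diameter of $D$ is at most $1$.

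Next I would apply the standard GRR chaining with Young function $\Psi(u) = u^k$ and bump $p(u) = u$ (the fractional exponents are already absorbed into $\varrho$). The chaining proceeds through iteratively refined partitions of $D$ into rectangles of sides $2^{-n/\tau} \times 2^{-n/\mu}$; these play the role of dyadic cubes in the usual Euclidean argument. The outcome is a pointwise bound of the form
\[
|f(a) - f(a')| \le 8 \int_0^{\varrho(a, a')} \left( \frac{J^k}{V_\varrho(u/4)^2} \right)^{1/k} du \qquad (a, a' \in D),
\]
where $J^k$ denotes the double integral appearing on the right-hand side of the lemma and $V_\varrho(u)$ is the minimal Lebesgue measure of a $\varrho$-ball of radius $u$ meeting $D$.

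Substituting $V_\varrho(u) \gtrsim u^\delta$, the integral evaluates to a constant multiple of $J \cdot (1 - 2\delta/k)^{-1}\, \varrho(a, a')^{1 - 2\delta/k}$. The hypothesis $k \ge 4\delta$ forces $2\delta/k \le \tfrac12$, so $(1 - 2\delta/k)^{-1} \le 2$; and since $\varrho(a, a') \le 1$, the power factor is itself bounded by $1$. Careful tracking of absolute constants then yields $\sup_{a, a' \in D} |f(a) - f(a')| \le 640\, J$, from which the claimed bound $\sup_D |f| - \inf_D |f| \le 640\,J$ follows by $1$-Lipschitzness of $|\cdot|$.

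The main obstacle is less conceptual than one of bookkeeping: one must verify that the Euclidean-case chaining proof of GRR carries through when the volume estimate $V(r) \asymp r^d$ is replaced by $V_\varrho(r) \asymp r^\delta$, and one must track absolute constants carefully enough to arrive at the explicit factor $640$. A minor subtlety is that $\varrho$-balls near $\partial D$ may be cut off; this is handled by working with rectangular cells of controlled aspect ratio that tile $D$ up to a bounded overlap, so that the ball-volume lower bound remains intact throughout the chain.
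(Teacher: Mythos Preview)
Your approach is essentially the same as the paper's: both apply the Garsia--Rodemich--Rumsey inequality with $\Psi(x)=|x|^k$ and $p(u)=u$, use the ball-volume lower bound $|B_\varrho(w,r)|\gtrsim r^{\tau^{-1}+\mu^{-1}}$, and invoke $k\ge 4(\tau^{-1}+\mu^{-1})$ to bound the resulting integral by a universal constant. The only difference is cosmetic: the paper cites a ready-made GRR lemma (Proposition~A.1 of Dalang--Khoshnevisan--Nualart) with leading constant $10$ and then bounds $\int_0^2 |B_\varrho(w,u/4)|^{-2/k}\,\d u\le 64$ to arrive at $10\times 64=640$, whereas you sketch the chaining from scratch.
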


Thanks to the proof of the
Kolmogorov continuity theorem for two-parameter processes, the optimal condition for boundedness,
in the above context, is $k>2(\tau^{-1}+\mu^{-1})$; see also the proof below. As we will see,
the slightly stronger condition  $k\ge 4(\tau^{-1}+\mu^{-1})$  produces a universal
constant (here, 640), among other things independently of $(k\,,\mu\,,\tau)$.

\begin{proof}
	We apply the particular form of Garsia's lemma \cite{G}
	from Dalang, Khoshnevisan, and Nualart 
	\cite[Proposition A.1]{DKN} with 
	$S=\R^2$, $K=I\times[0\,,1]$, $\varrho$ given by \eqref{rho}, $\nu=$ the Lebesgue measure on
	$\R^2$, $\Psi(x)=|x|^k$, and $p(u)=u$ in order to see that for all real numbers $k\ge1$,
	\begin{align}\nonumber
		&\sup_{y\in I\times[0,1]}|f(y)| - \inf_{y\in I\times[0,1]}|f(y)| \\
		&\le\sup\left\{ |f(a)-f(a')| :\, a,a'\in I\times[0\,,1],\ \varrho(a\,,a')\le1\right\}
			\label{Garsia}\\\nonumber
		&\le 10 \left(\int_{I\times[0,1]}\d a\int_{I\times[0,1]}\d a'\
			\left| \frac{f(a)-f(a')}{\varrho(a\,,a')}\right|^k\right)^{1/k}
			\sup_{w\in I\times[0,1]}\int_0^2
		\frac{\d u}{| B_\varrho(w\,,u/4)|^{2/k}},
	\end{align}
	where the second $|B_\varrho(w\,,u/4)|$ denotes the Lebesgue measure of $B_\varrho(w\,,u/4)$, and
	$B_\varrho(w\,,r) = \{v\in I\times[0\,,1]:\,\varrho(w\,,v) < r\}
	= ( t -  r^{1/\tau} , t +  r^{1/\tau} ) \times
	( x -  r^{1/\mu}, x +  r^{1/\mu} ),$
	for all $w\in I\times[0\,,1]$ and $r>0$.
	Let $\zeta = \tau^{-1}+\mu^{-1}$, and conclude from above that
	\[
		\int_0^2| B_\varrho(w\,,u/4)|^{-2/k}\,\d u < \int_0^2
		(4/u)^{2\zeta/k}\,\d u \le 64,
	\]
	uniformly for all $w\in I\times[0,1]$, $k\ge 4\zeta$ and closed
	intervals $I\subset(0\,,\infty)$ of length $\le1$.
	This and \eqref{Garsia} together imply the lemma.
\end{proof}

\begin{proof}[Proof of Proposition \ref{pr:norm<N}]
	Let $I(n) = [2^{-n-1}\beta^{-1},2^{-n}\beta^{-1}]$ for all $n\in\Z_+$.
	Recall \eqref{rho} and apply \eqref{Kolm:cond} to see that
	\begin{equation}\label{intintI(n)}\begin{split}
		&\int_{I(n)\times[0,1]}\d t\,\d x
			\int_{I(n)\times[0,1]}\d s\,\d y\
			\E\left(\left| \frac{X(t\,,x)-X(s\,,y)}{\varrho((t\,,x)\,,(s\,,y))}\right|^k\right)\\
		&\hskip3in
			\le \frac{C^k \exp\left( 2^{-n}k/\beta \right)}{2^{(n+1)\alpha k}\beta^{\alpha k}}.
	\end{split}\end{equation}
	The length of $I(n)$ is $\le 1$ since $\beta\ge1$.
	Therefore, Lemma \ref{lem:Garsia} implies that
	\begin{align*}
		\left\| \sup_{I(n)\times[0,1]}|X| \right\|_k - \left\| \inf_{I(n)\times[0,1]}|X| \right\|_k
		&\le 640\left[\frac{C^k \exp\left( 2^{-n}k/\beta \right)}{2^{(n+1)\alpha k}\beta^{\alpha k}}\right]^{1/k}\\
		&\le 640C\e 2^{-n\alpha}\beta^{-\alpha},
	\end{align*}
	with room to spare. In particular, whenever $\bar\alpha>\alpha>0$ and $\bar\beta>\beta\ge1$,
	\begin{align*}
		&\left\| \|X\|_{\sC_{1/\beta}(\bar\alpha,\bar\beta)}\right\|_k
			=\left\| \sup_{(t,x)\in(0,1/\beta]\times[0,1]}
			\left( t^{\bar\alpha} \e^{-\bar\beta t} |X(t\,,x)| \right)
			\right\|_k \\
		&\le \sum_{n=0}^\infty
			\left\| \sup_{(t,x)\in I(n)\times[0,1]} \left( t^{\bar\alpha}  |X(t\,,x)| \right)
			\right\|_k 	\le \beta^{-\bar\alpha}\sum_{n=0}^\infty 2^{-n\bar\alpha}
			\left\| \sup_{I(n)\times[0,1]} |X| \right\|_k\\
		&\le \beta^{-\bar\alpha}\sum_{n=0}^\infty 2^{-n\bar\alpha}
			\inf_{(t,x)\in I(n)\times[0,1]}\left\|  X(t\,,x)\right\|_k
			+640C\e \beta^{-\bar\alpha-\alpha}\sum_{n=0}^\infty 2^{-n(\alpha+\bar\alpha)}.
	\end{align*}
	Because $\sup_{(t,x)\in I(n)\times[0,1]}\|  X(t\,,x)\|_k$ is bounded from above by
	\[
		2^{(n+1)\alpha}\beta^\alpha\exp(2^{-n})\cN_{k,\alpha,\beta,1/\beta}(X)
		\le 2^{(n+1)\alpha}\beta^\alpha\e\cN_{k,\alpha,\beta,\infty}(X),
	\]
	it follows  from the preceding that
	\begin{equation}\label{first}\begin{split}
		\left\| \|X\|_{\sC_{1/\beta}(\bar\alpha,\bar\beta)}\right\|_k
			&\le \frac{2^\alpha\e \beta^{-(\bar\alpha-\alpha)}
			}{1-2^{-(\bar\alpha-\alpha)}}\,\cN_{k,\alpha,\beta,\infty}(X)
			+ \frac{640C\e \beta^{-(\bar\alpha+\alpha)}}{1-2^{-(\bar\alpha+\alpha)}}\\
		&\le 640L\left[ \cN_{k,\alpha,\beta,\infty}(X)+ C\right],
	\end{split}\end{equation}
	where $L$ is defined in \eqref{L}.
	
	Next, let $J(n) = [n/\beta \,,(n+1)/\beta]$ for all $n\in\N$, and deduce from \eqref{Kolm:cond} that
	\[
		\int_{J(n)\times[0,1]}\d t\,\d x
		\int_{J(n)\times[0,1]}\d s\,\d y\
		\E\left(\left| \frac{X(t\,,x)-X(s\,,y)}{\varrho((t\,,x)\,,(s\,,y))}\right|^k\right)
		\le \frac{C^k\beta^{\alpha k}\e^{(n+1)k}}{n^{\alpha k}}.
	\]
	[Compare with \eqref{intintI(n)}.] 
	The length of $J(n)$ is $\le 1$ since $\beta\ge1$.
	Therefore, Lemma \ref{lem:Garsia} implies that
	\[
		\left\| \sup_{J(n)\times[0,1]}|X| \right\|_k - \left\| \inf_{J(n)\times[0,1]}|X| \right\|_k
		\le 640C\beta^\alpha\e^{n+1} n^{-\alpha} < 640C\beta^\alpha\e^{n+1},
	\]
	with room to spare. In particular, whenever $\bar\alpha>\alpha>0$ and $\bar\beta>\beta\ge1$,
	\begin{align*}
		&\left\| \sup_{(t,x)\in [1/\beta,\infty)\times[0,1]}  
			\left( t^{\bar\alpha} \e^{-\bar\beta t} |X(t\,,x)| \right)
			\right\|_k \le \sum_{n=1}^\infty
			\left\| \sup_{(t,x)\in J(n)\times[0,1]} 
			\left( t^{\bar\alpha}\e^{-\bar\beta t}  |X(t\,,x)| \right)
			\right\|_k \\
		&\le \beta^{-\bar\alpha}{\sum_{n=0}^\infty} (n+1)^{\bar\alpha}\
			\e^{-\bar\beta n/\beta} 
			\left\| \sup_{J(n)\times[0,1]} |X| \right\|_k\\
		&\le \beta^{-\bar\alpha}\sum_{n=0}^\infty (n+1)^{\bar\alpha}\
			\e^{-\bar\beta n/\beta} 
			\inf_{(t,x)\in J(n)\times[0,1]}\left\|  X(t\,,x)\right\|_k
			+ 640C\beta^{-(\bar\alpha-\alpha)}\sum_{n=0}^\infty
			\e^{-n(\bar\beta-\beta)/\beta}\\
		&\le\sum_{n=0}^\infty (n+1)^{\bar\alpha}\
			\e^{-\bar\beta n/\beta} 
			\inf_{(t,x)\in J(n)\times[0,1]}\left\|  X(t\,,x)\right\|_k
			+ \frac{640C}{1-\exp\{ -(\bar\beta-\beta)/\beta\} },
	\end{align*}
	with room to spare.
	Because 
	\[
		\inf_{(t,x)\in J(n)\times[0,1]}\left\|  X(t\,,x)\right\|_k
		\le (n/\beta)^\alpha\e^n
		\cN_{k,\alpha,\beta,n/\beta}(X)
		\le n^\alpha\e^n \cN_{k,\alpha,\beta,\infty}(X),
	\]
	it follows  from the preceding that
	\begin{align*}
		&\left\| \sup_{(t,x)\in [1/\beta,\infty)\times[0,1]}  
			\left( t^{\bar\alpha} \e^{-\bar\beta t} |X(t\,,x)| \right)
			\right\|_k \\
		&\le \cN_{k,\alpha,\beta,\infty}(X)
			\sum_{n=0}^\infty \frac{(n+1)^{\bar\alpha+\alpha}}{
			\exp\{n(\bar\beta-\beta)/\beta\}}
			+ \frac{640C}{1-\exp\{ -(\bar\beta-\beta)/\beta\} }\\
		&\le 640L\left[ \cN_{k,\alpha,\beta,\infty}(X)+C\right];
	\end{align*}
	see \eqref{L}.
	Add this to \eqref{first} in order to deduce the bulk of the proposition;
	that is, 
	\begin{equation}\label{mom:bd}
		\E( \|X\|_{\sC_\infty(\bar\alpha,\bar\beta)}^k)
		\le (1280L)^k[ \cN_{k,\alpha,\beta,\infty}(X)+C]^k.
	\end{equation}
	In order to complete the proof it remains to verify
	that $X\in\sC_T(\alpha\,,\beta)$ a.s.\ for all $T\in(0\,,\infty)$. Because
	of the already-proved moment bound \eqref{mom:bd}, we need only check
	that $X$ is a.s.\ continuous on $\sD(\infty)$. But that follows
	immediately from \eqref{Kolm:cond} and a suitable form of
	a 2-parameter Kolmogorov continuity theorem
	such as the previously-mentioned Proposition A.1 of \cite{minicourse}.
\end{proof}

\section{Some preliminary integral inequalities}

Recall the Green function defined in (\ref{G}). In this section we collect
some elementary real-variable properties of that Green function. The first
is a small variation of a very well-known property of all such Green functions.
We include the short proof for the sake of completeness.

\begin{lemma}\label{lem:G}
	$\int_0^1 G_s(x\,,y)\,\d y\le1$ and 
	$0\le G_s(x\,,z) \le 1/\sqrt s$ for all $s>0$ and $x,z\in[0\,,1]$.
\end{lemma}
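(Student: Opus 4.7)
The plan is to establish both assertions from the standard probabilistic / image representation of $G_s$ as the transition sub-density of a Brownian motion on $\R$ (with generator $\tfrac12\partial_x^2$) killed on hitting $\{0,1\}$. Concretely, letting $(B_t)_{t\ge 0}$ be standard Brownian motion started at $x\in[0\,,1]$ and $\tau = \inf\{t>0 : B_t\notin(0\,,1)\}$, I would assert and use the identity
\[
	P_x\{B_s\in A,\ \tau>s\} = \int_A G_s(x\,,y)\,\d y
	\qquad\text{for every Borel } A\subseteq[0\,,1].
\]

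To justify this, the cleanest route is to derive the image expansion
\[
	G_s(x\,,y) = \sum_{n\in\Z}\bigl[ p_s(y-x-2n) - p_s(y+x-2n)\bigr],
	\qquad p_s(z) = \frac{1}{\sqrt{2\pi s}}\,\e^{-z^2/(2s)},
\]
either by applying Poisson summation to the sine series in \eqref{G}, or, more painlessly, by verifying that both the sine series and the right-hand side above solve $\partial_s G = \tfrac12\partial_x^2 G$ on $(0\,,\infty)\times(0\,,1)$ with Dirichlet boundary conditions and $\delta_y$ initial data, and invoking uniqueness of bounded solutions. The same uniqueness argument identifies either expression with the density of $B_s$ killed at $\tau$, since the killed-BM transition density manifestly satisfies that Cauchy--Dirichlet problem.

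Once the representation is in hand, all three claims are one-line consequences. Nonnegativity is immediate, since $P_x\{B_s\in\d y,\ \tau>s\}\ge 0$. The integral bound is
\[
	\int_0^1 G_s(x\,,y)\,\d y = P_x\{\tau>s\} \le 1.
\]
For the pointwise bound, killing can only decrease the sub-density, so
\[
	0 \le G_s(x\,,y) \le p_s(y-x) \le p_s(0) = \frac{1}{\sqrt{2\pi s}} \le \frac{1}{\sqrt s},
\]
the last inequality using $\sqrt{2\pi}>1$.

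The only genuine obstacle here is the identification of $G_s$ with the image/killed-BM representation; everything else is a one-line consequence. Since both competing expressions solve the same well-posed Cauchy--Dirichlet problem with bounded data off $s=0$, uniqueness settles the matter without computation, so even this step is routine.
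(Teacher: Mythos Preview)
Your proposal is correct and follows essentially the same approach as the paper: both identify $G_s$ as the transition sub-density of Brownian motion killed at $\{0,1\}$ (the paper simply cites Bass for this, whereas you sketch the image expansion), and both read off nonnegativity and the integral bound immediately from that interpretation.

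The only notable difference lies in the pointwise upper bound. You use the comparison $G_s(x,y)\le p_s(y-x)\le p_s(0)=(2\pi s)^{-1/2}$, exploiting that killing can only decrease the density. The paper instead bounds the sine series directly: replacing $|\sin|\le1$ gives $G_s(x,z)\le 2\sum_{n\ge1}\e^{-n^2\pi^2 s/2}$, which is dominated by $\int_{-\infty}^\infty \e^{-w^2\pi^2 s/2}\,\d w=\sqrt{2/(\pi s)}$. Your route is arguably cleaner and more conceptual; the paper's is self-contained from the series definition \eqref{G} without needing the image representation for this step. Both give constants strictly better than $1/\sqrt s$.
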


\begin{proof}
	$(0\,,\infty)\times[0\,,1]^2\ni
	(s\,,x\,,y)\mapsto G_s(x\,,y)$ is also the transition probability density for a Brownian motion,
	run at twice the standard speed, and killed when it reaches $\{0\,,1\}$; see
	Bass \cite[Ch.~2, \S7]{Bass}. This implies that
	$\int_0^1 G_s(x\,,y)\,\d y$ is the probability that the same Brownian motion has not yet been killed
	by time $s$, started at $x\in[0\,,1]$. Because of this,
	\eqref{G} implies that
	\[
		0\le G_s(x\,,z) \le 2\sum_{n=1}^\infty \exp(-n^2\pi^2 s/2)
		\le\int_{-\infty}^\infty\exp(-w^2\pi^2s/2)\,\d w,
	\]
	and the lemma follows.
\end{proof}

The following continuity estimate can be found in Dalang and Sanz-Sol\'e
\cite[Lemma B.2.1]{bookDS}.

\begin{lemma}\label{lem:G-G:x}
	For all $x,z\in[0\,,1]$, 
	\[
		\int_0^\infty\d s\int_0^1 |G_s(x\,,y)-G_s(z\,,y)|^2\,\d y \lesssim |x-z|,
	\]
	where the implied constant is universal.
\end{lemma}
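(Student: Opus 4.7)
The plan is to use the sine-series representation \eqref{G} directly and exploit orthogonality. Writing $G_s(x\,,y)-G_s(z\,,y)=2\sum_{n\ge1}[\sin(n\pi x)-\sin(n\pi z)]\sin(n\pi y)\e^{-n^2\pi^2s/2}$, I would integrate $|G_s(x\,,y)-G_s(z\,,y)|^2$ in $y\in[0\,,1]$ using $\int_0^1\sin(n\pi y)\sin(m\pi y)\,\d y=\tfrac12\delta_{n,m}$, obtaining
\[
\int_0^1|G_s(x\,,y)-G_s(z\,,y)|^2\,\d y=2\sum_{n=1}^\infty[\sin(n\pi x)-\sin(n\pi z)]^2\e^{-n^2\pi^2 s}.
\]
Integrating in $s\in(0\,,\infty)$ then gives a clean expression:
\[
\int_0^\infty\d s\int_0^1|G_s(x\,,y)-G_s(z\,,y)|^2\,\d y=\frac{2}{\pi^2}\sum_{n=1}^\infty\frac{[\sin(n\pi x)-\sin(n\pi z)]^2}{n^2}.
\]

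Next I would estimate the series using the two elementary bounds $|\sin(n\pi x)-\sin(n\pi z)|\le n\pi|x-z|$ (mean value) and $|\sin(n\pi x)-\sin(n\pi z)|\le 2$ (trivial). Splitting at the natural threshold $N=\lceil 1/|x-z|\rceil$, the low-frequency part $n\le N$ contributes at most $\sum_{n\le N}\pi^2|x-z|^2\le\pi^2 N|x-z|^2\lesssim |x-z|$, while the high-frequency part $n>N$ contributes at most $\sum_{n>N}4/n^2\lesssim 1/N\lesssim|x-z|$. Adding the two parts yields the desired universal bound $\lesssim|x-z|$, with constant independent of $x,z\in[0\,,1]$.

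The case $x=z$ is trivial, and I would remark that, since the sum splits at the scale $1/|x-z|$ and the exponential factor $\e^{-n^2\pi^2 s}$ has already been integrated out, the estimate is essentially tight and matches the expected $\tfrac12$-Hölder regularity in $x$. There is no real obstacle here; the only minor nuisance is making sure the splitting threshold is handled cleanly when $|x-z|$ is close to $1$, but in that regime the bound is trivial from the crude estimate $[\sin(n\pi x)-\sin(n\pi z)]^2\le 4$ and $\sum_n 1/n^2<\infty$. I would therefore carry out the argument in a single short calculation, noting that this is the classical Dalang--Sanz-Solé bound reproduced here for convenience.
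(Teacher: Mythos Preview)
Your proof is correct. The paper does not actually prove this lemma; it merely cites Dalang and Sanz-Sol\'e \cite[Lemma B.2.1]{bookDS} and moves on. Your argument---Parseval on the sine series, integrate the exponential in $s$, then split the resulting sum $\sum_n [\sin(n\pi x)-\sin(n\pi z)]^2/n^2$ at $N\sim 1/|x-z|$---is the standard self-contained derivation and is cleanly executed. The only cosmetic point: you might state the splitting as $[\sin(n\pi x)-\sin(n\pi z)]^2\le (n\pi|x-z|)^2\wedge 4$ and sum directly, which avoids having to treat the edge case $|x-z|\asymp 1$ separately (though your remark that this regime is trivial from $\sum_n 1/n^2<\infty$ is of course fine).
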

%
%

%

Next we list a weighted integral inequality for the Green function.

\begin{lemma}\label{lem:G-G:L1:x}
	Choose and fix $\alpha, \delta\in[0\,,1)$, and $\chi\ge0$.
	Then, 
	\[
		t^\alpha\e^{-\beta t}\int_0^t s^{-\alpha}
		|\log_+(1/s)|^\chi\, \e^{\beta s}\,\d s\int_0^1\d y\
		\left| G_{t-s}(x\,,y) - G_{t-s}(z\,,y) \right|
		\lesssim \frac{|x-z|^\delta}{\beta^{1-\delta}},
	\]
	 uniformly for all $t\in(0\,,1]$, $\beta>0$, and $x,z\in[0\,,1]$.
\end{lemma}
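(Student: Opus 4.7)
The plan is to reduce the lemma, via a pointwise bound on $J(u):=\int_0^1|G_u(x,y)-G_u(z,y)|\,\d y$, to a deterministic integral inequality that I can handle by splitting. I would combine $J\le 2$ (from Lemma~\ref{lem:G} and the triangle inequality) with the gradient bound $J(u)\le C|x-z|/\sqrt u$ -- the latter coming from writing $G_u(x,y)-G_u(z,y)=\int_z^x\partial_w G_u(w,y)\,\d w$ together with the standard estimate $\int_0^1|\partial_w G_u(w,y)|\,\d y\lesssim u^{-1/2}$ (derivable from the reflection principle or directly from the sine-series). The elementary inequality $\min(a,b)\le a^{1-\delta}b^\delta$ then yields $J(u)\le C|x-z|^\delta u^{-\delta/2}$ for every $\delta\in[0,1]$.

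Plugging this pointwise bound in and changing variables $u=t-s$ in the outer integral, the lemma reduces to the deterministic estimate
\[
I:=t^\alpha\int_0^t(t-u)^{-\alpha}\,|\log_+(1/(t-u))|^\chi\,\e^{-\beta u}\,u^{-\delta/2}\,\d u\,\lesssim\,\beta^{-(1-\delta)}.
\]
I would split $I=I_1+I_2$ at $u=t/2$. On $(0,t/2)$, where $t-u\ge t/2$, I bound $(t-u)^{-\alpha}|\log_+(1/(t-u))|^\chi\le C t^{-\alpha}|\log_+(2/t)|^\chi$ and estimate the remaining $\int_0^{t/2}\e^{-\beta u}u^{-\delta/2}\,\d u$ by the smaller of its two obvious upper bounds: $\Gamma(1-\delta/2)\beta^{-(1-\delta/2)}$ (the full Gamma integral) and $Ct^{1-\delta/2}$ (dropping the exponential). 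Applying $\min(A,B)\le A^{1-\theta}B^\theta$ with $\theta=\delta/(2-\delta)$ converts this minimum into $Ct^{\delta/2}\beta^{-(1-\delta)}$, which gives $I_1\lesssim t^{\delta/2}|\log_+(2/t)|^\chi\beta^{-(1-\delta)}$; the auxiliary factor $t^{\delta/2}|\log_+(2/t)|^\chi$ is uniformly bounded on $(0,1]$ since any positive power of $t$ dominates a logarithm near $0$. On $(t/2,t)$, I pull out $\e^{-\beta u}\le\e^{-\beta t/2}$ and $u^{-\delta/2}\le C t^{-\delta/2}$, and use the substitution $w=t-u$ to turn $\int_{t/2}^t(t-u)^{-\alpha}|\log_+(1/(t-u))|^\chi\,\d u$ into $\int_0^{t/2}w^{-\alpha}|\log_+(1/w)|^\chi\,\d w\le C_{\alpha,\chi}$ (finite because $\alpha<1$ and logarithms integrate against any integrable power singularity). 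The resulting bound $I_2\lesssim t^{\alpha-\delta/2}\e^{-\beta t/2}$ is then controlled against $\beta^{-(1-\delta)}$ via the substitution $\eta=\beta t$ and the boundedness of $\eta\mapsto\eta^{1-\delta}\e^{-\eta/2}$ on $(0,\infty)$; in the complementary regime $\alpha+\delta/2<1$, one instead keeps $u^{-\delta/2}\lesssim(t-u)^{-\delta/2}$ inside $I_2$ and re-runs the $I_1$-style geometric-mean argument on the resulting Beta-type integral.

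The main obstacle is producing the \emph{sharp} exponent $\beta^{-(1-\delta)}$ rather than the naïve $\beta^{-(1-\delta/2)}$ coming straight from the Gamma integral: the geometric-mean trade between $\beta^{-(1-\delta/2)}$ and $t^{1-\delta/2}$ is exactly what bridges this half-unit gap. Secondary technical points are the uniform control of $t^{\delta/2}|\log_+(2/t)|^\chi$ on $(0,1]$ (log beats power), the integrability of $w^{-\alpha}|\log_+(1/w)|^\chi$ near $0$ (which is why $\alpha$ is confined to $[0,1)$), and the separate handling of the $\alpha+\delta/2<1$ subregime in the $I_2$ bound.
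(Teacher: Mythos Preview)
Your proposal is correct and follows essentially the same route as the paper: both interpolate the trivial bound $J(u)\le 2$ with the gradient bound $J(u)\lesssim|x-z|/\sqrt{u}$ to obtain $J(u)\lesssim|x-z|^\delta u^{-\delta/2}$, then split the resulting one-variable integral at the midpoint and estimate each half separately.

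The only noteworthy difference is in the mechanics of extracting $\beta^{-(1-\delta)}$. For your $I_1$ (the paper's $\int_{t/2}^t$), you use the geometric-mean interpolation between $\Gamma(1-\delta/2)\beta^{-(1-\delta/2)}$ and $Ct^{1-\delta/2}$; the paper instead absorbs the logarithm via $|\log_+(1/s)|^\chi\lesssim s^{-\delta/2}$ and then uses the pointwise inequality $t^{-\delta/2}(t-s)^{-\delta/2}\le(t-s)^{-\delta}$ (valid since $t-s\le t$) to land directly on $\int_0^\infty u^{-\delta}\e^{-\beta u}\,\d u\propto\beta^{-(1-\delta)}$. For your $I_2$ (the paper's $\int_0^{t/2}$), your primary argument needs $\alpha+\delta/2\ge1$ and you sketch an alternative for the complementary regime; the paper's computation is in effect your ``complementary'' approach and tacitly uses $\alpha+\delta/2<1$ for the convergence of $\int_0^{t/2}s^{-\alpha-\delta/2}\,\d s$. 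So the two proofs cover the parameter range from opposite ends, each with a small (easily patched) gap at the boundary --- and both share the harmless lacuna that the bound $t^{\delta/2}|\log_+(2/t)|^\chi\lesssim1$ (respectively $|\log_+(1/s)|^\chi\lesssim s^{-\delta/2}$) fails when $\delta=0$ and $\chi>0$.
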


\begin{proof}
	Thanks to \eqref{G}, uniformly for all $t>s>0$ and $x,z\in[0\,,1]$,
	\begin{align*}
		\int_0^1 \left| G_{t-s}(x\,,y) - G_{t-s}(z\,,y) \right|\d y 
			&\le2\sum_{n=1}^\infty
			|\sin(n\pi x)-\sin(n\pi z)|\e^{-n^2\pi^2(t-s)/2}\\
		&\le2\pi\sum_{n=1}^\infty (n|x-z|\wedge1)\e^{-n^2\pi^2(t-s)/2}.
	\end{align*}
	This sum can be estimated via an integral test in order to find that
	the preceding is $\lesssim |x-z| / \sqrt{t-s}$
	uniformly for all $t>s>0$ and $x,z\in[0\,,1]$. And the simple estimate 
	$|G_{t-s}(x\,,y)-G_{t-s}(z\,,y)|\le G_{t-s}(x\,,y)+G_{t-s}(z\,,y)$, used in conjunction
	with Lemma \ref{lem:G}, yields the bounds,
	\begin{equation}\label{G:L1:x}\begin{split}
		\int_0^1 \left| G_{t-s}(x\,,y) - G_{t-s}(z\,,y) \right|\d y\lesssim
		\min\left(1\,,\, \frac{|x-z| }{\sqrt{t-s}}\right)
		\le \frac{|x-z|^\delta}{|t-s|^{\delta/2}},
	\end{split}\end{equation}
	valid uniformly for all $t>s>0$, $x,z\in[0\,,1]$, and $\delta\in[0\,,1]$. 
	We apply \eqref{G:L1:x} by splitting the integral $\int_0^t(\,\cdots)$
	of the lemma as $\int_0^{t/2}+ \int_{t/2}^t$.
	
	If $s\in(0\,,t/2)$, then $\sqrt{t-s}\ge\sqrt{t/2}$
	and $\exp(\beta s)\le \exp(\beta t/2)$, whence it follows from
	\eqref{G:L1:x} that for every  fixed choice of $\delta\in(0\,,1]$, and $\chi\in[0\,,1]$,
	\begin{align*}
		& t^\alpha \e^{-\beta t}\int_0^{t/2} s^{-\alpha}|\log_+(1/s)|^\chi\,
			\e^{\beta s}\,\d s \int_0^1\d y\
			\left| G_{t-s}(x\,,y) - G_{t-s}(z\,,y) \right|\\
		&\hskip1.5in\lesssim
			t^{\alpha - \frac\delta2} 
			\e^{-\beta t/2}\int_0^{t/2} s^{-\alpha}|\log_+(1/s)|^\chi\,\d s\
			|x-z|^\delta,
	\end{align*}
	uniformly for all $\beta >0$, $t\in(0\,,1]$, 
	$\chi\ge0$, and $x,z\in[0\,,1]$; the implied constants depend only on 
	$\delta$. Because $|\log(1/s)|^\chi\lesssim
	s^{-\delta/2}$ uniformly for all $s\in(0\,,1]$, this yields
	\begin{equation}\label{(I)}\begin{split}
		& t^\alpha \e^{-\beta t}\int_0^{t/2} s^{-\alpha}|\log_+(1/s)|^\chi\,
			\e^{\beta s}\,\d s \int_0^1\d y\
			\left| G_{t-s}(x\,,y) - G_{t-s}(z\,,y) \right|\\
		&\lesssim t^{1-\delta} 
			\e^{-\beta t/2}|x-z|^\delta \le
			\sup_{r>0}\left[ r^{1-\delta}\e^{-\beta r/2}\right]|x-z|^\delta
			\propto \beta^{-(1-\delta)}  |x-z|^\delta,
	\end{split}\end{equation}
	uniformly for all $\beta>0$, $t\in(0\,,1]$, 
	and $x,z\in[0\,,1]$, and the implied constants depend only on 
	$(\chi\,,\delta)$.
	
	Next we study $\int_{t/2}^t(\,\cdots)$. 
	Note that, similarly to the above case,
	\begin{align*}
		& t^\alpha\e^{-\beta t}\int_{t/2}^t s^{-\alpha}|\log_+(1/s)|^\chi
			\e^{\beta s}\,\d s \int_0^1\d y\
			\left| G_{t-s}(x\,,y) - G_{t-s}(z\,,y) \right|\\
		& t^\alpha\e^{-\beta t}\int_{t/2}^t s^{-\alpha-(\delta/2)}
			\e^{\beta s}\,\d s \int_0^1\d y\
			\left| G_{t-s}(x\,,y) - G_{t-s}(z\,,y) \right|\\
		&\lesssim t^{\delta/2}   |x-z|^\delta 
			\int_{t/2}^t (t-s)^{-\delta/2}
			\e^{-\beta (t-s)} \,\d s
			\le |x-z|^\delta  \int_0^\infty s^{-\delta}\e^{-\beta s}\,\d s
			\propto \frac{|x-z|^\delta}{\beta^{1-\delta}},
	\end{align*}
	using somewhat crude bounds, all valid
	uniformly for all $\beta>0$, $t\in(0\,,1]$.  Combine the above
	with \eqref{(I)} in order to deduce the lemma. 
\end{proof}

The next two lemmas are just real-variable lemmas, and do not involve
the Green function, but are proved using similar methods as in the previous
lemma.

\begin{lemma}\label{lem:elem}
	If $\delta,\alpha\in(0\,,1)$ and $\chi\in[0\,,1]$, then
	\[
		t^\alpha\e^{-\beta t}\int_0^t s^{-\alpha}|\log_+(1/s)|^\chi\, \e^{\beta s}\,\d s
		\lesssim \beta^{-(1-\delta)},
	\]
	uniformly for all $t\in(0\,,1]$ and $\beta>0$. When $\chi=0$, we can choose even $\delta=0$.
\end{lemma}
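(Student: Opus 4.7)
The plan is to mimic the splitting that drove the proof of Lemma \ref{lem:G-G:L1:x}: break $\int_0^t$ as $\int_0^{t/2}+\int_{t/2}^t$, estimate each piece separately, and then optimize in $t$. The statement is trivial for bounded $\beta$ since the integrand minus the exponentials is integrable on $(0\,,1]$ while the right-hand side blows up as $\beta\to 0$, so only large $\beta$ is at stake. The target $\beta^{-(1-\delta)}$ is monotone decreasing in $\delta$ once $\beta\ge1$, so I may shrink $\delta$ and in particular assume $\delta<1-\alpha$ whenever convenient, in order to guarantee convergence of the auxiliary integrals below.

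For the first piece, $\e^{\beta s}\le \e^{\beta t/2}$ on $(0\,,t/2)$ absorbs $\e^{-\beta t}$ into $\e^{-\beta t/2}$, and the elementary bound $|\log_+(1/s)|^\chi\lesssim s^{-\delta}$ on $(0\,,1]$ reduces the task to
\[
	t^\alpha \e^{-\beta t/2}\int_0^{t/2}s^{-\alpha-\delta}\,\d s\lesssim t^{1-\delta}\e^{-\beta t/2}\lesssim \beta^{-(1-\delta)},
\]
where the last step uses $\sup_{r>0}r^{1-\delta}\e^{-\beta r/2}\asymp\beta^{-(1-\delta)}$. When $\chi=0$, there is no logarithm, $\int_0^{t/2}s^{-\alpha}\,\d s\lesssim t^{1-\alpha}$, and the same computation yields $t\e^{-\beta t/2}\lesssim\beta^{-1}$, which is the $\delta=0$ case.

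For the second piece, on $s\in(t/2\,,t)$ with $t\in(0\,,1]$ I would use $s^{-\alpha}\le 2^\alpha t^{-\alpha}$ and the monotonicity $|\log_+(1/s)|^\chi\le|\log_+(2/t)|^\chi$, combined with $\e^{-\beta t}\int_{t/2}^t \e^{\beta s}\,\d s=\int_0^{t/2}\e^{-\beta u}\,\d u\le\min(t/2\,,1/\beta)$, to bound the piece by $|\log_+(2/t)|^\chi\min(t/2\,,1/\beta)$. I would then split into two regimes. If $t\le 1/\beta$ the minimum is $t/2$ and $t|\log_+(2/t)|^\chi\lesssim t^{1-\delta}\le\beta^{-(1-\delta)}$; if $t>1/\beta$ the minimum is $1/\beta$, and $|\log_+(2/t)|^\chi\le|\log_+(2\beta)|^\chi\lesssim\beta^\delta$, giving $\beta^{\delta-1}$. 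The $\chi=0$ subcase collapses directly to $\min(t/2\,,1/\beta)\le 1/\beta$. I do not foresee a genuine obstacle; the only mildly delicate step is the absolute convergence in the first piece, which the freedom to shrink $\delta$ handles.
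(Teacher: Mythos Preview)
Your argument is correct and follows the paper's approach: the same $\int_0^{t/2}+\int_{t/2}^t$ splitting, the same bound $|\log_+(1/s)|^\chi\lesssim s^{-\delta}$ on the first piece, and the same optimization $\sup_{r>0}r^{1-\delta}\e^{-\beta r/2}\asymp\beta^{-(1-\delta)}$; on the second piece the paper instead pulls $t^{-\alpha-\delta}$ out and uses $t^{-\delta}\le s^{-\delta}$ to reach $\int_0^\infty s^{-\delta}\e^{-\beta s}\,\d s\propto\beta^{-(1-\delta)}$, whereas you bound by $\min(t/2,1/\beta)$ and split cases---an equivalent device. One slip: $\beta^{-(1-\delta)}$ is \emph{increasing} in $\delta$ for $\beta\ge1$, not decreasing, but your conclusion (``I may shrink $\delta$'') is the right one, since proving the bound for a smaller $\delta$ yields a stronger inequality that implies the stated one.
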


\begin{proof}
	For every $\delta,\alpha\in(0\,,1)$, $\chi\in[0\,,1]$, $t\in(0\,,1]$, and $\beta>0$,
	\begin{align*}
		&\int_0^t s^{-\alpha}|\log_+(1/s)|^\chi\, \e^{\beta s}\,\d s 
			\lesssim \e^{\beta t/2} \int_0^{t/2} s^{-\alpha-\delta}\,\d s
			+\e^{\beta t} t^{-\alpha-\delta}\int_{t/2}^t \e^{-\beta (t-s)}\,\d s\\
		&\lesssim \e^{\beta t/2} t^{1-\alpha-\delta} +
			\e^{\beta t}t^{-\alpha-\delta}\int_0^{t/2}\e^{-\beta s}\,\d s
			 \lesssim \e^{\beta t/2} t^{1-\alpha-\delta} +
			\e^{\beta t}t^{-\alpha}\int_0^\infty s^{-\delta}\e^{-\beta s}\,\d s\\
		& \propto\e^{\beta t/2} t^{1-\alpha-\delta} +
			\e^{\beta t}t^{-\alpha}\beta^{-(1-\delta)},
	\end{align*}
	where the implied constants only depend on $(\chi\,,\delta)$. Consequently,
	\[
		t^\alpha\e^{-\beta t}\int_0^t s^{-\alpha}|\log_+(1/s)|^\chi\, \e^{\beta s}\,\d s
		\lesssim \sup_{t>0}\left[t^{1-\delta}\e^{-\beta t/2}\right] 
		+\beta^{-(1-\delta)},
	\]
	where the implied constants only depend on $(\chi\,,\delta)$. 
	This completes the proof in the case that $\chi>0$. When $\chi=0$, we go through the preceding
	with $\delta=0$ line by line to see that it continues to be valid. This completes the proof.
\end{proof}

\begin{lemma}\label{lem:rv}
	If $\alpha\in(0\,,1)$ and $\chi\ge0$ are fixed, then 
	\[
		t^\alpha \e^{-\beta t}\int_0^t s^{-\alpha}|\log_+(1/s)|^\chi\,
		\e^{\beta s}\,\d s
		\lesssim \frac{(\log\beta)^\chi}{\beta},
	\]
	uniformly for all $t>0$ and $\beta \ge \e$.
\end{lemma}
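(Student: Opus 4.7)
The plan is to exploit the fact that the exponential kernel $\e^{-\beta(t-s)}$ concentrates the integral near $s=t$ when $t$ is large (case $t>1/\beta$) or near $s=t$ when $t$ is small (case $t\le 1/\beta$), and to observe that at the critical scale $s\sim 1/\beta$ one has $|\log_+(1/s)|^\chi\sim(\log\beta)^\chi$, which is exactly the logarithmic factor appearing in the bound. Accordingly I would split $\int_0^t = \int_0^{t\wedge(1/\beta)} + \int_{t\wedge(1/\beta)}^{t}$, the second piece being empty when $t\le 1/\beta$.

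For the first piece, on $s\in(0\,,t\wedge(1/\beta)]$ I use the crude estimate $\e^{\beta s}\le\e$. A substitution $s = ru$ with $r=t\wedge(1/\beta)\le 1$ and the inequality $(a+b)^\chi\le 2^\chi(a^\chi+b^\chi)$ quickly give the elementary bound
\[
    \int_0^r s^{-\alpha}|\log_+(1/s)|^\chi\,\d s \lesssim r^{1-\alpha}|\log_+(1/r)|^\chi,
\]
with implied constant depending only on $(\alpha\,,\chi)$. When $t\le 1/\beta$ this contributes $\lesssim t|\log_+(1/t)|^\chi$; the function $t\mapsto t|\log(1/t)|^\chi$ is increasing on $(0\,,\e^{-\chi}]$, so for $\beta\ge\e^\chi$ its supremum over $t\in(0\,,1/\beta]$ is attained at $t=1/\beta$ and equals $(\log\beta)^\chi/\beta$ (for $\beta$ in the bounded range $[\e\,,\e^\chi]$ both sides are bounded and nonzero, so the inequality is absorbed into the implied constant). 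When $t>1/\beta$, the first piece contributes at most $t^\alpha\e^{-\beta t}\beta^{-(1-\alpha)}(\log\beta)^\chi = (\beta t)^\alpha\e^{-\beta t}\cdot(\log\beta)^\chi/\beta \lesssim (\log\beta)^\chi/\beta$, since $\sup_{r>0}r^\alpha\e^{-r}<\infty$.

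For the second piece (present only when $t>1/\beta$), on $s\in[1/\beta\,,t]$ we have $|\log_+(1/s)|^\chi = |\log(1/s+\e)|^\chi \lesssim (\log\beta)^\chi$ uniformly (since $1/s+\e\le\beta+\e\lesssim\beta$ for $\beta\ge\e$). Pulling this constant out, it remains to show that $t^\alpha\e^{-\beta t}\int_{1/\beta}^t s^{-\alpha}\e^{\beta s}\,\d s \lesssim 1/\beta$. The change of variable $u=\beta(t-s)$ transforms this expression into
\[
    \frac{1}{\beta}\int_0^{\beta t-1}\left(\frac{\beta t}{\beta t-u}\right)^{\!\alpha}\e^{-u}\,\d u,
\]
and splitting the range at $u=\beta t/2$ shows the integral is uniformly bounded: on $(0\,,\beta t/2)$ the ratio inside the parenthesis is at most $2$, and on $(\beta t/2\,,\beta t-1)$ the factor $\e^{-u}\le\e^{-\beta t/2}$ dominates $(\beta t)^\alpha$.

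The main structural point -- and the reason for the improved factor $(\log\beta)^\chi/\beta$, as opposed to the weaker $\beta^{-(1-\delta)}$ obtained in Lemma~\ref{lem:elem} -- is that the logarithm need never be evaluated at scales finer than $s\sim 1/\beta$: at smaller scales the polynomial weight $s^{-\alpha}$ remains integrable and the exponential weight $\e^{\beta(s-t)}$ is exponentially suppressed, while at scales $s\ge 1/\beta$ the logarithm is controlled by $\log\beta$ uniformly. There is no substantial obstacle; the only care needed is to verify each of the four subcases uniformly for all $t>0$ and $\beta\ge\e$, which the splitting above achieves.
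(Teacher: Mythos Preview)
Your proof is correct and follows essentially the same approach as the paper's: both treat $t\le 1/\beta$ by bounding $\e^{\beta s}\le\e$ and integrating directly, and for $t>1/\beta$ both exploit that the logarithmic weight is $\lesssim(\log\beta)^\chi$ once $s\gtrsim 1/\beta$, together with the exponential decay of $\e^{-\beta(t-s)}$. The only organizational difference is that the paper, after the change of variable $s=t(1-r)$, splits once at $s=t/2$, whereas you split first at $s=1/\beta$ (cleanly isolating the logarithmic threshold) and then again at $s=t/2$ via the $u$-variable; the underlying estimates are the same.
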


\begin{proof} 
	Throughout, define for all $t>0$, $\alpha\in(0\,,1)$, $\beta\ge\e$, and $\chi\ge0$,
	\[
		I(t)=I(t\,,\alpha\,,\beta\,,\chi) = t^\alpha \e^{-\beta t}
		\int_0^t s^{-\alpha}|\log_+(1/s)|^\chi\e^{\beta s}\,\d s.
	\]
	Choose and fix an arbitrary number
	$\beta\ge\e$ throughout.
	If $t\in(0\,,1/\beta)$, then
	\[
		I(t) \le t^\alpha \int_0^t s^{-\alpha} |\log_+(1/s)|^\chi\,\d s
		\lesssim\beta^{-1}(\log\beta)^\chi,
	\]
	where the implied constant depends only on $(\alpha\,,\chi)$. 
	Next we consider the remaining case that $t\ge1/\beta$. In that case, we may change variables 
	to see that
	\[
		I(t) =t \int_0^1 (1-r)^{-\alpha} |\log_+(1/[(1-r)t])|^\chi 
		\e^{-\beta tr}\,\d r.
	\]
	On one hand, because $\log\beta\ge1$ and $\log_+(1/s)\ge1$ for all $s\ge0$,
	\begin{align*}
		& t \int_{1/2}^1 (1-r)^{-\alpha}
			|\log_+(1/[(1-r)t])|^\chi \e^{-\beta tr}\,\d r
			\lesssim t |\log_+(1/t)|^\chi \e^{-\beta t/2} \lesssim \beta^{-1}(\log\beta)^\chi,
	\end{align*}
	where the implied constant depends only on $(\alpha\,,\chi)$.\footnote{If in fact $t\ge1$ then 
	this estimate holds even without the $\log\beta$ since in that case $\log_+(1/t)\lesssim1$.} On the other hand,
	\[
		 t \int_0^{1/2} (1-r)^{-\alpha}
		|\log_+(1/[(1-r)t])|^\chi \e^{-\beta tr}\,\d r
		 \lesssim  t(\log\beta)^\chi  \int_0^\infty
		\e^{-\beta tr}\,\d r =\beta^{-1}(\log\beta)^\chi,
	\]
	where the implied constant depends only on $(\alpha\,,\chi)$.  These estimates
	together prove the lemma.
\end{proof}

Finally, we will need the following two temporal regularity estimates of the Dirichlet heat kernel.
\begin{lemma} \label{lem:temp:log}
	$\sup_{x\in[0,1]}\int_0^1| G_t(x\,,y)-G_s(x\,,y)|\,\d y
	\le \log(t/s)\quad\forall 0<s<t$.
\end{lemma}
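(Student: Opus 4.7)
My plan is to reduce the $L^1$-in-$y$ estimate to an estimate on $\partial_r G_r(x,y)$ via the fundamental theorem of calculus, then pull the Dirichlet heat kernel back to a full-line Gaussian computation using the method of images.

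Concretely, I would start by observing that Fubini together with the fundamental theorem of calculus gives
\[
	\int_0^1 |G_t(x,y) - G_s(x,y)|\,\d y
	\le \int_s^t \d r \int_0^1 |\partial_r G_r(x,y)|\,\d y,
\]
so it suffices to show that $\int_0^1|\partial_r G_r(x,y)|\,\d y\le 1/r$ for every $r>0$ and $x\in[0\,,1]$, after which a direct integration $\int_s^t\d r/r=\log(t/s)$ finishes the job. Since $G_r$ solves the heat equation $\partial_r G_r=\tfrac12\partial_y^2 G_r$ in the $y$-variable (with Dirichlet boundary conditions), this in turn reduces to proving the bound $\int_0^1|\partial_y^2 G_r(x,y)|\,\d y\le 2/r$.

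To prove this last inequality, I would invoke the method-of-images representation
\[
	G_r(x\,,y) = \sum_{k\in\Z}\bigl[p_r(y-x+2k) - p_r(y+x+2k)\bigr],
	\qquad p_r(w)=(2\pi r)^{-1/2}\e^{-w^2/(2r)},
\]
which is easily verified from the sine series \eqref{G} via Poisson summation, and whose term-by-term differentiation in $y$ is justified by the rapid decay of the derivatives of $p_r$. Applying the triangle inequality gives
\[
	\int_0^1 |\partial_y^2 G_r(x\,,y)|\,\d y
	\le \sum_{k\in\Z}\int_0^1 |p_r''(y-x+2k)|\,\d y
	+ \sum_{k\in\Z}\int_0^1 |p_r''(y+x+2k)|\,\d y.
\]
The key geometric observation is that for any fixed $x\in[0\,,1]$ the intervals $[-x+2k\,,1-x+2k]$ (and likewise $[x+2k\,,1+x+2k]$) are pairwise disjoint as $k$ varies over $\Z$, so each of the two sums is bounded by $\int_\R |p_r''(w)|\,\d w$.

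It remains to compute the $L^1$-norm of $p_r''$. A direct calculation shows $p_r''(w)=p_r(w)(w^2-r)/r^2$, hence if $Z\sim N(0\,,1)$ then
\[
	\int_\R |p_r''(w)|\,\d w = \frac{1}{r}\,\E|Z^2-1| = \frac{4\varphi(1)}{r} = \frac{4}{r\sqrt{2\pi\e}},
\]
where the middle identity follows from $\E[(Z^2-1)_+]=\E[(Z^2-1)_-]$ and a single integration by parts. Since $8/\sqrt{2\pi\e}<2$, combining the previous displays yields $\int_0^1|\partial_y^2 G_r(x,y)|\,\d y\le 2/r$ with room to spare, and hence $\int_0^1|G_t-G_s|\,\d y\le \log(t/s)$. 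The only step that requires any real attention is the disjointness argument when exchanging the $L^1[0\,,1]$-integral with the reflection sum, and verifying the constant $\E|Z^2-1|=4/\sqrt{2\pi\e}$ is indeed at most $\sqrt{2\pi\e}/2$ so that the bound comes out with the sharp constant $1$ rather than some larger multiple of $\log(t/s)$.
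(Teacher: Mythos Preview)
Your proof is correct and follows essentially the same strategy as the paper: method of images, the fundamental theorem of calculus in $r$, and an $L^1$ bound on the time derivative of the Gaussian kernel, after which the disjointness of the reflected intervals converts the sum into a single integral over $\R$.

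The only noteworthy difference is in how the $L^1$ bound on $\partial_r$ is obtained. The paper asserts the pointwise inequality $|\partial_r\gamma_r(z)|\le r^{-1}\gamma_r(z)$ and then uses that the Neumann kernel integrates to $1$; however, since $\partial_r\gamma_r(z)=\gamma_r(z)(z^2-r)/(2r^2)$, that pointwise bound actually fails for $|z|>\sqrt{3r}$. Your route---passing through $\partial_r G_r=\tfrac12\partial_y^2 G_r$ and computing $\int_\R|p_r''|=r^{-1}\E|Z^2-1|=4/(r\sqrt{2\pi\e})$ exactly---avoids this slip and delivers the constant $1$ in front of $\log(t/s)$ cleanly. (Equivalently, one can repair the paper's argument by replacing the pointwise bound with the integrated one $\int_\R|\partial_r\gamma_r|=2/(r\sqrt{2\pi\e})<1/(2r)$, which is exactly what your computation yields.)
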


\begin{proof}
	Define $\gamma_t(x) = (2\pi t)^{-1/2}\exp\{ -x^2/(2t)\}$
	for all $t>0$ and $x\in\R$,
	and recall that
	$G_t(x\,,y)=\sum_{n=-\infty}^{\infty} \{
	\gamma_t(x-y-2n)-\gamma_t(x+y+2n)\}$, valid for all $t>0$ and $x,y\in[0\,,1]$;
	see for example Dalang and Sanz-Sol\'e 
	\cite[Lemma 1.4.1]{bookDS}. Thus, we may proceed as follows:
	\begin{align*}
		&\int_0^1\left| G_{t}(x\,,y)-G_{s}(x\,,y)\right|\,\d y
			\leq \int_0^1 \sum_{n=-\infty}^{\infty} \vert \gamma_t(x-y-2n)-
			\gamma_s(x-y-2n)\vert \, \d y\\
		&\hskip1.8in+
			\int_0^1 \sum_{n=-\infty}^{\infty} \vert \gamma_t(x+y+2n)-
			\gamma_s(x+y+2n)\vert\, \d y\\
		&\leq \int_0^1 \d y \sum_{n=-\infty}^{\infty} 
			\int_s^t\d r\, \vert\partial_r \gamma_r(x-y-2n)\vert +
			\int_0^1 \d y\,\sum_{n=-\infty}^{\infty}  \int_s^t\d r\,
			\vert\partial_r \gamma_r(x+y+2n))\vert.
	\end{align*}
	It is easy to see that $|\partial_r \gamma_r(z)| \leq r^{-1} \gamma_r(z)$ for all $r>0$ and $z\in\R$.
	Because
	$\sum_{n=-\infty}^\infty\{\gamma_r(x-y-2n)+\gamma_r(x+y+2n)\}$
	defines the heat kernel with Neumann boundary in $[0\,,1]$ -- see for example
	Dalang and Sanz-Sol\'e \cite[Lemma 1.4.3]{bookDS} --
	this concludes the proof.
\end{proof}

\begin{lemma}\label{lem:G-G:L1:t}
	Choose and fix $\delta\in(0\,,1)$ and $\eta\in(\delta\,,1)$. Then,
	\[
		t^\theta\e^{-\beta t}\int_0^t s^{-\theta}
		\e^{\beta s}\,\d s \int_0^1\d y\
		\left| G_{t+\varepsilon-s}(x\,,y) - G_{t-s}(x\,,y) \right| 
		\lesssim \frac{\varepsilon^\delta}{\beta^{1-\eta}},
	\]
	uniformly for all $t\in(0\,,1]$, $\beta>0$, $\varepsilon\in(0\,,1)$, and $x\in[0\,,1]$.
\end{lemma}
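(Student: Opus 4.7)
My approach closely parallels the proof of Lemma \ref{lem:G-G:L1:x}, substituting temporal regularity of the heat kernel for spatial regularity. The crucial input is Lemma \ref{lem:temp:log} itself, which gives
\[
	\int_0^1 |G_{t+\varepsilon-s}(x,y)-G_{t-s}(x,y)|\,\d y \le \log\left(1+\frac{\varepsilon}{t-s}\right).
\]
To convert this logarithmic temporal modulus into a polynomial rate in $\varepsilon$, I would apply the elementary inequality $\log(1+u)\le C_\delta u^\delta$, valid uniformly for all $u\ge 0$ and $\delta\in(0\,,1)$ (since $\log(1+u)\le u\le u^\delta$ for $u\in[0\,,1]$, and $\log(1+u)\lesssim \log u \lesssim u^\delta$ for $u\ge 1$). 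This produces
\[
	\int_0^1 |G_{t+\varepsilon-s}(x,y)-G_{t-s}(x,y)|\,\d y \lesssim \varepsilon^\delta(t-s)^{-\delta}.
\]

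With this estimate in hand, I would split the $s$-integral as $\int_0^t=\int_0^{t/2}+\int_{t/2}^t$ and estimate each piece separately. For $s\in(0\,,t/2)$, the bounds $(t-s)^{-\delta}\le(t/2)^{-\delta}$ and $\e^{\beta s}\le\e^{\beta t/2}$, together with $\int_0^{t/2}s^{-\theta}\,\d s\lesssim t^{1-\theta}$, yield a contribution of order $\varepsilon^\delta t^{1-\delta}\e^{-\beta t/2}$; since $\sup_{t>0}t^{1-\delta}\e^{-\beta t/2}\propto \beta^{-(1-\delta)}$ (and this is trivially $\le 1$ for $t\le 1$), this piece is controlled by $\varepsilon^\delta\min(1\,,\beta^{-(1-\delta)})$. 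For $s\in(t/2\,,t)$, the estimate $s^{-\theta}\lesssim t^{-\theta}$ cancels the $t^\theta$ prefactor, and the substitution $u=t-s$ reduces the contribution to $\varepsilon^\delta\int_0^{t/2}u^{-\delta}\e^{-\beta u}\,\d u$. This last integral is bounded by $\Gamma(1-\delta)\beta^{-(1-\delta)}$ via extension to $(0\,,\infty)$, and also trivially by $\int_0^{1/2}u^{-\delta}\,\d u\lesssim 1$, giving once again $\varepsilon^\delta\min(1\,,\beta^{-(1-\delta)})$.

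The final step is to trade $\min(1\,,\beta^{-(1-\delta)})$ for the slightly worse $\beta^{-(1-\eta)}$: when $\beta\ge 1$, we have $\beta^{-(1-\delta)}\le \beta^{-(1-\eta)}$ since $1-\delta > 1-\eta > 0$; when $\beta<1$, $\min(1\,,\beta^{-(1-\delta)})=1\le \beta^{-(1-\eta)}$ since $\beta^{-(1-\eta)}\ge 1$. I expect the only genuine subtlety to be the log-to-power step, which is precisely what forces the strict gap $\eta>\delta$ in the statement: the natural temporal $L^1$-modulus of $G$ provided by Lemma \ref{lem:temp:log} is logarithmic, and converting to any polynomial rate $\varepsilon^\delta$ costs an arbitrarily small power of the $\beta$-decay. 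Everything else is routine bookkeeping that mirrors the spatial estimate in Lemma \ref{lem:G-G:L1:x}.
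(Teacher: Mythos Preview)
Your argument is correct and in fact proves the slightly stronger bound $\varepsilon^\delta\min(1,\beta^{-(1-\delta)})$, which you then relax to the stated $\varepsilon^\delta\beta^{-(1-\eta)}$. The paper reaches the same kernel estimate $\int_0^1|G_{t+\varepsilon-s}-G_{t-s}|\,\d y\lesssim\varepsilon^\delta(t-s)^{-\delta}$ (via $1\wedge(\varepsilon/(t-s))\le\varepsilon^\delta(t-s)^{-\delta}$, equivalent to your log-to-power step), but then handles the $s$-integral differently: instead of splitting at $t/2$, it rescales $s=tr$ and uses $\e^{-y}\lesssim y^{-(1-\eta)}$ to obtain $\beta^{-(1-\eta)}\varepsilon^\delta t^{\eta-\delta}\int_0^1 r^{-\theta}(1-r)^{\eta-\delta-1}\,\d r$, a Beta integral that converges precisely because $\eta>\delta$.

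One remark: your closing explanation for the strict gap $\eta>\delta$ does not match your own proof. In your splitting argument the log-to-power conversion costs nothing in the $\beta$-exponent; you recover $\beta^{-(1-\delta)}$ on the nose, and the gap is used only in the final, lossy comparison $\min(1,\beta^{-(1-\delta)})\le\beta^{-(1-\eta)}$, which you could have skipped. The gap is genuinely forced in the paper's route, where it is needed for the Beta integral to converge. So your approach is marginally sharper, while the paper's is more compact.
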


\begin{proof}
	Owing to Lemma \ref{lem:temp:log},
	\[
		\int_0^1 \left| G_{t+\varepsilon-s}(x\,,y) - G_{t-s}(x\,,y) \right| \d y
		\lesssim\log\left( 1+ \frac{\varepsilon}{t-s}\right) \le \frac{\varepsilon}{t-s},
	\]
	uniformly for all $x\in[0\,,1]$, $\varepsilon>0$, and $t>s>0$. And if we replace the difference
	in the integral by the corresponding sum, then Lemma \ref{lem:G} yields
	 an upper bound of 2. This shows that, 
	uniformly for all $x\in[0\,,1]$, $\varepsilon>0$, and $t>s>0$,
	\[
		\int_0^1 \left| G_{t+\varepsilon-s}(x\,,y) - G_{t-s}(x\,,y) \right| \d y
		\lesssim 1\wedge \frac{\varepsilon}{t-s},
	\]
	and hence
	\begin{align*}
		& t^\theta\e^{-\beta t}\int_0^t s^{-\theta}
			\e^{\beta s}\,\d s \int_0^1\d y\
			\left| G_{t+\varepsilon-s}(x\,,y) - G_{t-s}(x\,,y) \right| \\
		&\le \varepsilon^\delta t^\theta\e^{-\beta t}\int_0^t s^{-\theta}
			\e^{\beta s}(t-s)^{-\delta}\,\d s
			= \varepsilon^\delta t^{1-\delta}  \int_0^1 r^{-\theta}
			\e^{-\beta t(1-r)}(1-r)^{-\delta}\,\d r\\
		&\lesssim \beta^{-(1-\eta)}
			\varepsilon^\delta t^{\eta-\delta}  \int_0^1 r^{-\theta}
			(1-r)^{-1+\eta-\delta}\,\d r,
	\end{align*}
	valid since $\exp(-y)\lesssim y^{-(1-\eta)}$ uniformly for all $y\ge0$.
	This yields the lemma.
\end{proof}

\section{On convolutions: Bounded case}

In this section we study certain weighted norms of Lebesgue integrals of
random fields. The following identifies some of the necessary notions.

\begin{definition}
	For all real numbers $\alpha,\beta>0$ and $k\ge1$
	define $\cP_{k,\alpha,\beta}$ to be the Banach space
	of all predictable space-time random field $X=\{X(t\,,x)\}_{t>0,x\in[0,1]}$
	such that $\cN_{k,\alpha,\beta,1}(X)<\infty$, where
	the latter norm was defined in \eqref{N}.
\end{definition}
Now let us consider the linear operator $\L$ that maps a space-time function
$f$ to the space-time function $\L_f$ using the following description:\footnote{The
	symbol $\L$ is used to remind that $\L_f$ is a Lebesgue integral.}
\begin{equation}\label{int:L}
	\L_f(t\,,x) = \int_{\sD(t)} G_{t-s}(x\,,y) f(s\,,y)\,\d s\,\d y.
\end{equation}

The following is the main result of this section. 

\begin{proposition}\label{pr:sup:bdd:0}
	For all $\alpha\in(0\,,1)$ and $\bar\alpha>\alpha$
	there exists an otherwise universal number $c=c(\alpha\,,\bar\alpha)>0$
	such that for all $\beta\ge\e$, $k\ge1$, and all $X\in\cP_{k,\alpha,\beta}$,
	\[ 
		\E\left( \|\L_X\|_{\sC_1(\bar\alpha,2\beta)}^k\right) \le
		\left( \frac{c\cN_{k,\alpha,\beta,1}(X)}{\beta^{(1-\delta)/2}}\right)^k.
	\]
\end{proposition}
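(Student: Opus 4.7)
The plan is to apply the finite-$T$ variant of Proposition~\ref{pr:norm<N} recorded in Remark~\ref{rem:norm<N}, taking $T=1$, $\bar\alpha\in(\alpha,1)$, and $\bar\beta=2\beta$. This reduces the proposition to producing two estimates: a pointwise moment bound that controls $\cN_{k,\alpha,\beta,1}(\L_X)$, and a two-parameter modulus-of-continuity estimate that controls the Kolmogorov-type constant $C_1$ in \eqref{Kolm:cond}. Because $\L_X$ is a Lebesgue integral against $X$, both estimates follow from Minkowski's inequality in the form $\|\int(\cdots)\,\d s\,\d y\|_k\le\int\|(\cdots)\|_k\,\d s\,\d y$, combined with the Green-function estimates of Section~3 and the uniform moment bound $\|X(s\,,y)\|_k\le s^{-\alpha}\e^{\beta s}\cN_{k,\alpha,\beta,1}(X)$ valid on $\sD(1)$.

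For the pointwise estimate, Minkowski gives
\[
\|\L_X(t\,,x)\|_k\le \cN_{k,\alpha,\beta,1}(X)\int_0^t s^{-\alpha}\e^{\beta s}\,\d s\int_0^1 G_{t-s}(x\,,y)\,\d y.
\]
Lemma~\ref{lem:G} disposes of the $y$-integral, and Lemma~\ref{lem:elem} (with $\chi=\delta=0$) bounds $t^\alpha\e^{-\beta t}\int_0^t s^{-\alpha}\e^{\beta s}\,\d s$ by a universal constant times $\beta^{-1}$, which yields $\cN_{k,\alpha,\beta,1}(\L_X)\lesssim \cN_{k,\alpha,\beta,1}(X)/\beta$. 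For the spatial increment I write $\L_X(t\,,x)-\L_X(t\,,y)=\int_{\sD(t)}[G_{t-s}(x\,,z)-G_{t-s}(y\,,z)]X(s\,,z)\,\d s\,\d z$, apply Minkowski, and then Lemma~\ref{lem:G-G:L1:x} with $\chi=0$ and a parameter $\delta\in(0\,,1)$ to be fixed; this produces $t^\alpha\e^{-\beta t}\|\L_X(t\,,x)-\L_X(t\,,y)\|_k\lesssim \cN_{k,\alpha,\beta,1}(X)\beta^{-(1-\delta)}|x-y|^\delta$.

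For the temporal increment I split
\[
\L_X(t+\varepsilon\,,x)-\L_X(t\,,x)=\int_{\sD(t)}[G_{t+\varepsilon-s}-G_{t-s}](x\,,z)X(s\,,z)\,\d s\,\d z+\int_t^{t+\varepsilon}\!\!\int_0^1 G_{t+\varepsilon-s}(x\,,z)X(s\,,z)\,\d z\,\d s.
\]
The first piece is controlled by Lemma~\ref{lem:G-G:L1:t} with $\theta=\alpha$ and $\eta>\delta$ chosen close to $\delta$; the second, after Minkowski and Lemma~\ref{lem:G}, reduces to an elementary estimate of $\int_t^{t+\varepsilon}s^{-\alpha}\e^{\beta s}\,\d s$ producing a harmless $\varepsilon^{1-\alpha}$ factor. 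Together they give $t^\alpha\e^{-\beta t}\|\L_X(t+\varepsilon\,,x)-\L_X(t\,,x)\|_k\lesssim \cN_{k,\alpha,\beta,1}(X)\varepsilon^\delta/\beta^{1-\eta}$. The triangle inequality $\|\L_X(t+\varepsilon\,,x)-\L_X(t\,,y)\|_k\le\|\L_X(t+\varepsilon\,,x)-\L_X(t\,,x)\|_k+\|\L_X(t\,,x)-\L_X(t\,,y)\|_k$ and the choice $(\tau\,,\mu)=(\delta\,,\delta)$ in \eqref{Kolm:cond} then yield $C_1\lesssim \cN_{k,\alpha,\beta,1}(X)/\beta^{1-\eta}$.

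Plugging into Remark~\ref{rem:norm<N} yields $\E(\|\L_X\|_{\sC_1(\bar\alpha,2\beta)}^k)\le(1280L)^k[\cN_{k,\alpha,\beta,1}(\L_X)+C_1]^k$, which is of the asserted form once the free parameter in the conclusion is relabelled. The remark requires $k\ge 4(\tau^{-1}+\mu^{-1})=8/\delta$; for smaller $k$ one proves the bound for large $k$ first and then descends via Jensen's inequality. The main subtlety is not any single lemma but the coordinated bookkeeping of $(\alpha\,,\delta\,,\eta)$: the pointwise, spatial, and temporal contributions carry the respective factors $\beta^{-1}$, $\beta^{-(1-\delta)}$, and $\beta^{-(1-\eta)}$, so the overall decay in $\beta$ is dictated by the weakest, namely $\beta^{-(1-\eta)}$; choosing $\delta$ and then $\eta$ just above $\delta$ produces the claim.
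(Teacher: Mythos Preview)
Your approach is essentially identical to the paper's: invoke Remark~\ref{rem:norm<N} with $T=1$ and $\bar\beta=2\beta$, feeding it the pointwise bound (this is Lemma~\ref{lem:N(J)}), the spatial-increment bound (Lemma~\ref{lem:JX-JX:x}), and the temporal-increment bound (Lemma~\ref{lem:JX-JX:t}), each obtained via Minkowski plus the Green-function estimates of Section~3. The observation that $L$ in \eqref{L} depends only on $(\alpha,\bar\alpha)$ when $\bar\beta=2\beta$, and your Jensen step for small $k$, are both exactly what the paper has in mind.

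One point deserves more care. Your handling of the second temporal piece is too loose: after Minkowski and Lemma~\ref{lem:G}, the quantity to control is $t^\alpha\e^{-\beta t}\int_t^{t+\varepsilon}s^{-\alpha}\e^{\beta s}\,\d s$, and simply saying this ``produces a harmless $\varepsilon^{1-\alpha}$ factor'' ignores the exponential weight. If you bound $s^{-\alpha}\le t^{-\alpha}$ and integrate, you get $(\e^{\beta\varepsilon}-1)/\beta$, which carries no useful decay once $\beta\varepsilon$ is of order one; taken at face value this would make your $C_1$ only $O(\cN_{k,\alpha,\beta,1}(X))$ and the advertised $\beta$-power on the right-hand side would not follow. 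The paper's Lemma~\ref{lem:JX-JX:t} records instead the bound $t^\alpha\e^{-\beta t}Q_2\le\cN_{k,\alpha,\beta,1}(X)\,(\beta\varepsilon\wedge1)/\beta\le\cN_{k,\alpha,\beta,1}(X)\,\varepsilon^\delta\beta^{-(1-\delta)}$, which is what you need for the ``weakest factor wins'' bookkeeping you describe at the end. You should replace your $\varepsilon^{1-\alpha}$ remark with that estimate.
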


Proposition \ref{pr:sup:bdd:0} follows immediately from an appeal to Proposition \ref{pr:norm<N}
(see also Remark \ref{rem:norm<N}) using the following lemmas \ref{lem:N(J)}, 
\ref{lem:JX-JX:x}, and \ref{lem:JX-JX:t}. Therefore, we conclude this section, and hence also
the justification for Proposition \ref{pr:sup:bdd:0},
by stating and proving those lemmas next.

\begin{lemma}\label{lem:N(J)}
	Fix real numbers $k\ge 1$, $\alpha\in(0\,,1)$, and $\beta>0$.
	The restriction of the mapping $X\mapsto\L_X$ to the time interval $(0\,,1]$ 
	maps every $\cP_{k,\alpha,\beta}$ to itself
	quasi-isometrically in the sense that
	\[
		\cN_{k,\alpha,\beta,t}(\L_X) \lesssim \frac{\cN_{k,\alpha,\beta,t}(X)}{\beta}
		\quad\forall t\in(0\,,1],
	\]
	and the implied constant depends on $\alpha\in(0\,,1)$ but is otherwise universal.
\end{lemma}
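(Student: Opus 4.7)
The plan is to bound $\|\L_X(s,x)\|_k$ for each $(s,x)\in\sD(t)$ with $t\le1$ by the triangle / Minkowski inequality, then extract the weight $u^{-\alpha}e^{\beta u}$ from the integrand using the definition of $\cN_{k,\alpha,\beta,t}(X)$, and finally reduce the problem to the scalar estimate already recorded in Lemma \ref{lem:elem}. The only facts about the Green function that enter are its nonnegativity and the mass bound $\int_0^1 G_{s-u}(x,y)\,\d y\le 1$ from Lemma \ref{lem:G}; the small-time Dirichlet smoothing that would produce the extra $\beta^{-1/2}$ factor seen in Proposition \ref{pr:sup:bdd:0} is not used here, which explains why the rate in the lemma is only $\beta^{-1}$.

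Concretely, the steps are as follows. First, fix $(s,x)\in\sD(t)$ with $t\in(0\,,1]$ and apply Minkowski's inequality for the $L^k(\Omega)$ norm to the Lebesgue integral \eqref{int:L} to obtain
\[
\|\L_X(s\,,x)\|_k\le\int_0^s\d u\int_0^1 G_{s-u}(x\,,y)\,\|X(u\,,y)\|_k\,\d y.
\]
Second, invoke the defining inequality $\|X(u\,,y)\|_k\le u^{-\alpha}\e^{\beta u}\cN_{k,\alpha,\beta,t}(X)$, valid for all $(u\,,y)\in\sD(t)$, and integrate out $y$ using $\int_0^1 G_{s-u}(x\,,y)\,\d y\le1$ to get
\[
\|\L_X(s\,,x)\|_k\le\cN_{k,\alpha,\beta,t}(X)\int_0^s u^{-\alpha}\e^{\beta u}\,\d u.
\]
Third, multiply by $s^\alpha\e^{-\beta s}$ and apply Lemma \ref{lem:elem} in its $\chi=\delta=0$ form — which yields $s^\alpha\e^{-\beta s}\int_0^s u^{-\alpha}\e^{\beta u}\,\d u\lesssim\beta^{-1}$ uniformly for $s\in(0\,,1]$ and $\beta>0$, with an implied constant depending only on $\alpha$. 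Taking the supremum over $(s\,,x)\in\sD(t)$ concludes the bound $\cN_{k,\alpha,\beta,t}(\L_X)\lesssim \beta^{-1}\,\cN_{k,\alpha,\beta,t}(X)$.

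I do not anticipate a genuine obstacle: the lemma is essentially a packaging statement that says that the weighted norm $\cN_{k,\alpha,\beta,t}$ behaves well under convolution against the heat kernel, at the cost of a factor $\beta^{-1}$. The only point to watch is that the application of Lemma \ref{lem:elem} requires $\chi=0$ in order to permit $\delta=0$, so that we truly recover the full $\beta^{-1}$ decay rather than $\beta^{-(1-\delta)}$; this is compatible with the hypotheses because no logarithmic factor is present in the integrand after integrating out the Green function. The implied constant depends only on $\alpha$ via Lemma \ref{lem:elem}, matching the claim of the lemma.
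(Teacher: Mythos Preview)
Your proposal is correct and follows essentially the same route as the paper: Minkowski's inequality, the mass bound $\int_0^1 G_{s-u}(x,y)\,\d y\le 1$ from Lemma~\ref{lem:G}, and then Lemma~\ref{lem:elem} with $\chi=\delta=0$. The only point the paper adds that you omit is a one-line remark that $\L_X$ is predictable (needed for membership in $\cP_{k,\alpha,\beta}$, not just the norm bound), which it handles by reducing to continuous $X$ via linearity; you may want to include a similar sentence.
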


\begin{proof}
	Since $X\mapsto\L_X$ is a linear operator it suffices to consider the case
	that $X$ has continuous sample functions. In that case, predictability 
	can be checked by elementary means, and
	Minkowski's inequality for integrals and Lemma \ref{lem:G} together imply that
	$\|\L_X(r\,,x)\|_k \le 
	\cN_{k,\alpha,\beta,t}(X) 
	\int_0^r s^{-\alpha}\exp(\beta s)\,\d s$ for all $0<r\le t$ and $x \in [0,1]$.
	Lemma \ref{lem:elem} implies the result.
\end{proof}

\begin{lemma}\label{lem:JX-JX:x}
	Choose and fix an arbitrary number $\alpha,\delta\in(0\,,1)$. Then,
	\[
		\sup_{t\in(0,1]}\sup_{x,z\in[0,1]:x\neq z}\left( 
		t^\alpha \e^{-\beta t}\frac{\| \L_X(t\,,x) - \L_X(t\,,z) \|_k}{|x-z|^\delta}
		\right)\lesssim \frac{\cN_{k,\alpha,\beta,1}(X)}{\beta^{1-\delta}},
	\]
	uniformly for all $\beta >0$ and $X\in\cP_{k,\alpha,\beta}$.
\end{lemma}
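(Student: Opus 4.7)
The plan is to reduce the lemma to a clean application of Minkowski's inequality for integrals followed by the weighted heat-kernel estimate in Lemma \ref{lem:G-G:L1:x} with $\chi=0$. Since $\L$ is linear and (by a standard reduction) it suffices to handle $X$ with continuous sample paths so that Minkowski's integral inequality applies, I would start from the mild form
\[
\L_X(t\,,x)-\L_X(t\,,z)=\int_0^t\!\!\int_0^1\bigl[G_{t-s}(x\,,y)-G_{t-s}(z\,,y)\bigr]X(s\,,y)\,\d s\,\d y.
\]

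Taking the $L^k(\Omega)$-norm inside the double integral gives
\[
\|\L_X(t\,,x)-\L_X(t\,,z)\|_k\le\int_0^t\!\!\int_0^1|G_{t-s}(x\,,y)-G_{t-s}(z\,,y)|\,\|X(s\,,y)\|_k\,\d s\,\d y.
\]
The definition of $\cN_{k,\alpha,\beta,1}$ in \eqref{N} yields $\|X(s\,,y)\|_k\le s^{-\alpha}\e^{\beta s}\cN_{k,\alpha,\beta,1}(X)$ for every $s\in(0\,,1]$ and $y\in[0\,,1]$, so after pulling this factor out we obtain the bound
\[
\|\L_X(t\,,x)-\L_X(t\,,z)\|_k\le \cN_{k,\alpha,\beta,1}(X)\int_0^t s^{-\alpha}\e^{\beta s}\,\d s\int_0^1|G_{t-s}(x\,,y)-G_{t-s}(z\,,y)|\,\d y.
\]

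Now I would multiply both sides by $t^\alpha\e^{-\beta t}$ and apply Lemma \ref{lem:G-G:L1:x} with $\chi=0$ and the given $\delta\in(0\,,1)$: this is precisely the statement that the right-hand side of the last display, after weighting, is $\lesssim |x-z|^\delta/\beta^{1-\delta}$, uniformly in $t\in(0\,,1]$, $\beta>0$, and $x,z\in[0\,,1]$. Dividing by $|x-z|^\delta$ and taking the supremum in $(t\,,x\,,z)$ yields
\[
\sup_{t\in(0,1]}\sup_{\substack{x,z\in[0,1]\\x\neq z}}\!\!\left(t^\alpha\e^{-\beta t}\frac{\|\L_X(t\,,x)-\L_X(t\,,z)\|_k}{|x-z|^\delta}\right)\lesssim \frac{\cN_{k,\alpha,\beta,1}(X)}{\beta^{1-\delta}},
\]
with the implied constant depending only on $(\alpha\,,\delta)$.

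There is no real obstacle here: the result is essentially a deterministic $L^k$-Minkowski inequality coupled with the real-variable estimate already established in Lemma \ref{lem:G-G:L1:x}. The only minor technical point is the justification of Minkowski's inequality for integrals, which requires $X$ to be jointly measurable and absolutely integrable in an appropriate sense; for general $X\in\cP_{k,\alpha,\beta}$ this follows by approximating by continuous predictable fields (for which the bound is immediate) and passing to the limit using the finiteness of $\cN_{k,\alpha,\beta,1}(X)$.
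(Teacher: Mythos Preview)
Your proposal is correct and follows essentially the same approach as the paper: apply Minkowski's inequality to pull $\|X(s\,,y)\|_k$ inside the integral, bound it by $s^{-\alpha}\e^{\beta s}\cN_{k,\alpha,\beta,t}(X)$, and then invoke Lemma~\ref{lem:G-G:L1:x} (with $\chi=0$) after multiplying by $t^\alpha\e^{-\beta t}$. The paper's proof is terser but identical in substance.
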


\begin{proof}
	Choose and fix some $\alpha\in(0\,,1)$.
	For every $\beta,t>0$, $x,z\in[0\,,1]$, and $k\ge1$, 
	\begin{align*}
		&\| \L_X(t\,,x) - \L_X(t\,,z) \|_k \le \int_0^t\d s\int_0^1\d y\
			\left| G_{t-s}(x\,,y) - G_{t-s}(z\,,y) \right| \|X(s\,,y)\|_k\\
		&\le \cN_{k,\alpha,\beta,t}(X)\int_0^t s^{-\alpha} 
			\exp(\beta s)\,\d s \int_0^1\d y\
			\left| G_{t-s}(x\,,y) - G_{t-s}(z\,,y) \right|.
	\end{align*}Appeal to Lemma \ref{lem:G-G:L1:x} to finish.
\end{proof}

\begin{lemma}\label{lem:JX-JX:t}
	Choose and fix arbitrary numbers $\alpha,\delta\in(0\,,1)$
	and $\eta\in(\delta\,,1)$. Then,
	\[
		\sup_{(t,x)\in \sD(1)}
		\left( t^\alpha\e^{-\beta t}
		\frac{\| \L_X(t+\varepsilon\,,x) - \L_X(t\,,x) \|_k}{\varepsilon^\delta} \right) \lesssim
		 \frac{\cN_{k,\alpha,\beta,1}(X)}{\beta^{1-\eta}},
	\]
	uniformly for all $\beta >0$, $k\ge1$,  $\varepsilon\in(0\,,1)$,
	and $X\in\cP_{k,\alpha,\beta}$.
\end{lemma}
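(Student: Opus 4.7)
The plan is to follow the same scheme as in the proof of Lemma \ref{lem:JX-JX:x}, decomposing the Duhamel-type difference into its two natural constituents. Concretely, I would write
\[
\L_X(t+\varepsilon,x)-\L_X(t,x) = I_1(t,\varepsilon,x) + I_2(t,\varepsilon,x),
\]
where
\[
I_1(t,\varepsilon,x) = \int_t^{t+\varepsilon}\!\int_0^1 G_{t+\varepsilon-s}(x,y)\,X(s,y)\,\d y\,\d s
\]
isolates the contribution of the new time slab $(t,t+\varepsilon]$, and
\[
I_2(t,\varepsilon,x) = \int_0^t\!\int_0^1 \bigl[G_{t+\varepsilon-s}(x,y)-G_{t-s}(x,y)\bigr] X(s,y)\,\d y\,\d s
\]
captures the kernel difference over the common interval $(0,t]$. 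In both pieces, Minkowski's inequality for integrals moves $\|\,\cdot\,\|_k$ inside, and then the defining inequality $\|X(s,y)\|_k \le s^{-\alpha}\e^{\beta s}\cN_{k,\alpha,\beta,1}(X)$ reduces matters to purely deterministic estimates.

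For $I_2$ the resulting deterministic integral is precisely the object controlled by Lemma \ref{lem:G-G:L1:t} with $\theta=\alpha$; a direct application yields
\[
t^\alpha\e^{-\beta t}\,\|I_2(t,\varepsilon,x)\|_k \lesssim \frac{\cN_{k,\alpha,\beta,1}(X)\,\varepsilon^\delta}{\beta^{1-\eta}}.
\]
For $I_1$, I would invoke Lemma \ref{lem:G} so that $\int_0^1 G_u(x,y)\,\d y \le 1$ uniformly, and then reduce the bound to the purely scalar estimate
\[
t^\alpha\e^{-\beta t}\int_t^{t+\varepsilon} s^{-\alpha}\e^{\beta s}\,\d s \lesssim \frac{\varepsilon^\delta}{\beta^{1-\eta}}.
\]
This is a real-variable inequality in the spirit of Lemma \ref{lem:elem}, but localized to the short interval $[t,t+\varepsilon]$ rather than $[0,t]$. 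A change of variable $s=t+u$ together with the monotone bound $(t+u)^{-\alpha}\le t^{-\alpha}$ yields the crude estimate $(\e^{\beta\varepsilon}-1)/\beta$, after which the target inequality follows from an elementary bifurcation exploiting the strict inequality $\eta>\delta$: in the regime $\beta\varepsilon\le 1$ the standard bound $\e^{\beta\varepsilon}-1\le 2\beta\varepsilon$ combined with $\varepsilon\le 1/\beta$ gives $\varepsilon^{1-\delta}\beta^{1-\eta} \le (\beta\varepsilon)^{1-\delta}\beta^{\delta-\eta}\lesssim 1$, and the complementary regime is closed by sharpening the estimate on $\int_0^1 G_u(x,y)\,\d y$ with the companion bound $u^{-1/2}$ from Lemma \ref{lem:G} so as to turn the excess factor of $\e^{\beta\varepsilon}$ into a beta-gain after the change of variable $v=\beta u$.

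Combining the bounds for $I_1$ and $I_2$ completes the proof. The main obstacle is the delicate real-variable estimate in the $I_1$ term: the naive bound alone is not sharp enough, and the strict inequality $\eta>\delta$ must be used in exactly the same way that it appears in the proof of Lemma \ref{lem:G-G:L1:t}. By contrast, the $I_2$ piece is essentially immediate from the previously established heat-kernel calculus.
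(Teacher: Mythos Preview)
Your decomposition into $I_1$ and $I_2$ coincides with the paper's $Q_2$ and $Q_1$, and your treatment of $I_2$ via Lemma~\ref{lem:G-G:L1:t} is exactly the paper's argument. The gap is in the $I_1$ term when $\beta\varepsilon>1$. The ``companion bound'' $\int_0^1 G_u(x,y)\,\d y\le u^{-1/2}$ that you invoke from Lemma~\ref{lem:G} is \emph{weaker} than the bound $\le 1$ whenever $u\le 1$, and here $u=t+\varepsilon-s\in(0,\varepsilon)\subset(0,1)$; so this substitution cannot improve the crude estimate. Carrying it through yields only $t^\alpha\e^{-\beta t}\|I_1\|_k\lesssim \cN_{k,\alpha,\beta,1}(X)\,\e^{\beta\varepsilon}\beta^{-1/2}$, which still diverges with $\beta\varepsilon$. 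No change of variable $v=\beta u$ can absorb an exponential factor $\e^{\beta\varepsilon}$ into a power of $\beta$.

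This is not merely a defect of your route: the paper's own $Q_2$ step asserts $\int_t^{t+\varepsilon}s^{-\alpha}\e^{\beta s}\,\d s\le t^{-\alpha}\e^{\beta t}\int_t^{t+\varepsilon}\e^{-\beta|t-s|}\,\d s$, which carries the wrong sign in the exponent (for $s>t$ one has $\e^{\beta s}=\e^{\beta t}\e^{+\beta|t-s|}$). Indeed the lemma as stated is false in the regime $\beta\varepsilon\gg1$: for the deterministic field $X(s,y)=s^{-\alpha}\e^{\beta s}$ one has $\cN_{k,\alpha,\beta,1}(X)=1$, while $t^\alpha\e^{-\beta t}|\L_X(t+\varepsilon,x)-\L_X(t,x)|$ is of order $\e^{\beta\varepsilon}/\beta$ for fixed $t,\varepsilon\in(0,1)$ and $x=\tfrac12$. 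What rescues the paper is that the sole consumer of this estimate is Proposition~\ref{pr:norm<N} (via Remark~\ref{rem:norm<N}), whose proof applies the increment bound only on subintervals $I(n),J(n)$ of length $\le 1/\beta$; there $\beta\varepsilon\le1$, and your first-regime argument (equivalently the paper's intended bound $\beta\varepsilon\wedge1\le(\beta\varepsilon)^\delta$) is perfectly valid. The honest repair is therefore to restrict the claim to $\varepsilon\in(0,1/\beta]$, not to seek a sharper heat-kernel inequality.
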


\begin{proof}
	We may write
	$\| \L_X(t+\varepsilon\,,x) - \L_X(t\,,x) \|_k \le Q_1 + Q_2,$
	where
	\begin{align*}
		Q_1& = \int_0^t\d s\int_0^1\d y\
			|G_{t+\varepsilon-s}(x\,,y) - G_{t-s}(x\,,y)| \| X(s\,,y)\|_k,\\
		Q_2& = \int_t^{t+\varepsilon}\d s\int_0^1\d y\
			G_{t+\varepsilon-s}(x\,,y) \| X(s\,,y)\|_k.
	\end{align*}
	Because
	\[ 
		Q_1 \le \cN_{k,\alpha,\beta,t}(X)
		\int_0^t s^{-\alpha}\exp(\beta s)\,\d s \int_0^1\d y\
		|G_{t+\varepsilon-s}(x\,,y) - G_{t-s}(x\,,y)|,
	\]
	Lemma \ref{lem:G-G:L1:t} ensures that
	\[
		t^\alpha\e^{-\beta t} Q_1 \lesssim\cN_{k,\alpha,\beta,1}(X)
		\frac{\varepsilon^\delta}{\beta^{1-\eta}},
	\]
	uniformly for all $k\ge1$, $\beta>0$,
	$t\in(0\,,1]$, $\varepsilon\in(0\,,1)$,
	and $x\in[0\,,1]$ and all $X\in\cP_{k,\alpha,\beta}$.	
	And Lemma \ref{lem:G} and Minkowski's inequality together
	ensure that 
	\begin{align*}
		Q_2& \le \cN_{k,\alpha,\beta,1}(X)\int_t^{t+\varepsilon}
			\frac{\exp(\beta s)}{s^\alpha}\,\d s\le 
			\cN_{k,\alpha,\beta,1}(X)\frac{\exp(\beta t)}{t^\alpha}
			\int_t^{t+\varepsilon}
			\exp(-\beta |t-s|)\,\d s\\
		&= \cN_{k,\alpha,\beta,1}(X)\frac{\exp(\beta t)}{\beta t^\alpha}
			\int_0^{\beta\varepsilon}
			\e^{-r}\,\d r\le \cN_{k,\alpha,\beta,1}(X)\frac{\exp(\beta t)}{\beta t^\alpha}
			(\beta\varepsilon\wedge 1),
	\end{align*}
	uniformly for all $k\ge1$, $\beta>0$, $t\in(0\,,1]$, $\varepsilon\in(0\,,1)$,
	and $x\in[0\,,1]$ and all $X\in\cP_{k,\alpha,\beta}$.
	Because $\beta\varepsilon\wedge1\le(\beta\varepsilon)^\delta$, it follows that
	\[
		t^\alpha\e^{-\beta t}Q_2 \le \cN_{k,\alpha,\beta,1}(X)
		\frac{\varepsilon^\delta}{\beta^{1-\delta}},
	\]
	uniformly for all $k\ge1$, $\beta >0$, $t\in(0\,,1]$, $\varepsilon\in(0\,,1)$,
	and $x\in[0\,,1]$ and all $X\in\cP_{k,\alpha,\beta}$. 
	The above estimates for $Q_1$ and $Q_2$ together yield the lemma.
\end{proof}

\section{On stochastic convolutions: Bounded case}

We now study Walsh stochastic integrals of the form,\footnote{The
	symbol $\W$ is used to remind that $\W_X$ is a Walsh stochastic integral.}
\begin{equation}\label{int:W}
	(\W_X)(t\,,x) = \int_{\sD(t)} G_{t-s}(x\,,y) X(s\,,y)\,W(\d s\,\d y),
\end{equation}
where $X$ is a predictable random field that is deterministically bounded; that is,
\begin{equation}\label{bdd}
	\P\left\{ \sup_{(t,x)\in \sD(\infty)}|X(t\,,x)| \le M\right\}=1
	\quad\text{for a deterministic number $M>0$}.
\end{equation}

The following is the main result of this section.

\begin{proposition}\label{pr:sup:bdd}
	For all $\bar\alpha>\alpha>0$
	there exists  $c=c(\alpha\,,\bar\alpha)>0$
	such that for all $\beta,k\ge1$, $M>0$, and all predictable space-time
	random fields $X$ that satisfy \eqref{bdd},
	\[ 
		\E\left( \|\W_X\|_{\sC_\infty(\bar\alpha,2\beta)}^k\right) \le
		\frac{c^k k^{k/2}M^k}{\beta^{\alpha k}}.
	\]
\end{proposition}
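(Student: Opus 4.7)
The plan is to mirror the proof of Proposition \ref{pr:sup:bdd:0}: verify the hypotheses of Proposition \ref{pr:norm<N} (with $\bar\beta = 2\beta$, so that the constant $L$ of \eqref{L} depends only on $(\alpha\,,\bar\alpha)$ and not on $\beta$) for the random field $\W_X$. The essential change from the Lebesgue case is that the Burkholder--Davis--Gundy (BDG) inequality replaces Minkowski: BDG introduces the extra factor $\sqrt{k}$ that is responsible for the $k^{k/2}$ in the conclusion, and forces us to work with $L^2(\d y\,\d s)$ bounds on the Green function rather than $L^1(\d y\,\d s)$ ones.

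For the pointwise moment bound, BDG for Walsh integrals yields
\[
	\|\W_X(t\,,x)\|_k \le c\sqrt{k}\, M \left( \int_0^t\!\!\int_0^1 G_{t-s}(x\,,y)^2\,\d y\,\d s \right)^{1/2}.
\]
The semigroup identity $\int_0^1 G_r(x\,,y)^2\,\d y = G_{2r}(x\,,x) \lesssim r^{-1/2}$ (from the series \eqref{G}) bounds the iterated integral by $c\sqrt{t}$, so that optimizing $t^{\alpha+1/4}\e^{-\beta t}$ in $t>0$ gives $\cN_{k,\alpha,\beta,\infty}(\W_X) \lesssim \sqrt{k}\, M \beta^{-\alpha-1/4} \le c\sqrt{k}\, M \beta^{-\alpha}$. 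For spatial regularity, BDG and Lemma \ref{lem:G-G:x} give $\|\W_X(t\,,x) - \W_X(t\,,z)\|_k \le c\sqrt{k}\, M |x-z|^{1/2}$. For temporal regularity, BDG applied separately to the two pieces $\int_0^t$ and $\int_t^{t+\varepsilon}$, together with the classical $L^2$ estimate
\[
	\int_0^t\!\!\int_0^1 |G_{t+\varepsilon-s}(x\,,y) - G_{t-s}(x\,,y)|^2\,\d y\,\d s + \int_t^{t+\varepsilon}\!\!\int_0^1 G_{t+\varepsilon-s}(x\,,y)^2\,\d y\,\d s \lesssim \sqrt\varepsilon,
\]
yields $\|\W_X(t+\varepsilon\,,x) - \W_X(t\,,x)\|_k \le c\sqrt{k}\, M \varepsilon^{1/4}$. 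Since $\sup_{t>0} t^\alpha \e^{-\beta t} \lesssim \beta^{-\alpha}$, fixing any $\mu\in(0\,,\tfrac12)$ and $\tau\in(0\,,\tfrac14)$ makes the constant $C$ in \eqref{Kolm:cond} no larger than $c\sqrt{k}\, M \beta^{-\alpha}$.

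Applying Proposition \ref{pr:norm<N} with these parameters and $\bar\beta = 2\beta$ then produces
\[
	\E(\|\W_X\|_{\sC_\infty(\bar\alpha, 2\beta)}^k) \le (1280 L)^k \bigl[c\sqrt{k}\, M \beta^{-\alpha}\bigr]^k,
\]
which is the advertised bound, valid for all $k \ge k_0 := 4(\tau^{-1}+\mu^{-1})$; for $1 \le k < k_0$, Jensen's inequality $\E(Y^k) \le \E(Y^{k_0})^{k/k_0}$ reduces matters to the previous case and only changes the constant $c$, since $k_0$ depends only on $(\alpha\,,\bar\alpha)$. The main technical point is ensuring that BDG's $\sqrt{k}$ propagates cleanly to the final $k^{k/2}$ without introducing any parasitic $k$-dependence; this hinges on the universal constant $1280$ in Proposition \ref{pr:norm<N} being $k$-independent, and on $L$ being $\beta$-independent under the choice $\bar\beta = 2\beta$. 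The only mildly tedious ingredient is the $L^2$ temporal heat-kernel estimate above, which can be reduced by the image-method for $G$ (as in the proof of Lemma \ref{lem:temp:log}) to a standard Gaussian computation.
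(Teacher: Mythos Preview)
Your proposal is correct and follows essentially the same route as the paper: the paper deduces Proposition~\ref{pr:sup:bdd} from Proposition~\ref{pr:norm<N} (with $\bar\beta=2\beta$ so that $L$ is $\beta$-free) by establishing the three ingredients in Lemmas~\ref{lem:N(I)}, \ref{lem:IX-IX:x}, and \ref{lem:IX-IX:t}, each via BDG together with the relevant $L^2$ heat-kernel bound---exactly as you outline. Two minor remarks: the paper computes the temporal $L^2$ increment via the eigenfunction series \eqref{G} rather than the image method you suggest (both work), and your closing step invoking Jensen to pass from $k\ge k_0=4(\tau^{-1}+\mu^{-1})$ down to $k\ge1$ is a detail the paper leaves implicit; note that $k_0$ depends only on the fixed pair $(\tau,\mu)$, not on $(\alpha,\bar\alpha)$, so it is a universal constant.
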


Proposition \ref{pr:sup:bdd} follows directly from Proposition \ref{pr:norm<N}
(see also Remark \ref{rem:norm<N}) by applying lemmas \ref{lem:N(I)}, 
\ref{lem:IX-IX:x}, and \ref{lem:IX-IX:t}. Therefore, we end this section by presenting and proving these lemmas below.

\begin{lemma}\label{lem:N(I)}
	Choose and fix an $\alpha>0$. Then, $\W_X\in\cP_{k,\alpha,\beta}$
	for all $k\ge1$, $\beta>0$, and predictable space-time
	random fields $X$ that satisfy \eqref{bdd}. Moreover,
	\[
		\cN_{k,\alpha,\beta,t}(\W_X) 
		\lesssim \frac{M\sqrt{k}}{\beta^{\alpha+\frac14}},
	\]
	uniformly for all $t,\alpha,\beta,M>0$, and $k\in[1\,,\infty)$, and all predictable
	random fields $X$ that satisfy \eqref{bdd}.
\end{lemma}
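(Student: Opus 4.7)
\medskip

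\noindent\textbf{Proof proposal for Lemma \ref{lem:N(I)}.}
The plan is to bound $\|\W_X(s,x)\|_k$ by Burkholder--Davis--Gundy (in the form that applies to Walsh stochastic integrals, together with Minkowski's inequality), then optimize the temporal weight $s^\alpha\e^{-\beta s}$. Predictability of $\W_X$ is standard for Walsh integrals of bounded predictable integrands, so the substantive content is the moment bound. Throughout, I would assume $k\ge2$ (the range $k\in[1,2)$ then follows from Jensen/Lyapunov, since $\|\W_X(s,x)\|_k\le\|\W_X(s,x)\|_2$).

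The first step is to apply BDG to the martingale $r\mapsto \int_{\sD(r)}G_{s-u}(x,y)X(u,y)\,W(\d u\,\d y)$ evaluated at $r=s$, and then Minkowski's inequality for the integral $L^{k/2}$-norm of the quadratic variation, which yields, for some universal $c_0>0$,
\[
\|\W_X(s,x)\|_k^2 \le c_0 k\int_0^s\d u\int_0^1\d y\ G_{s-u}(x,y)^2\,\|X(u,y)\|_k^2.
\]
Using \eqref{bdd}, the inner expectation is bounded by $M^2$. By Lemma \ref{lem:G},
\[
\int_0^1 G_{s-u}(x,y)^2\,\d y\le \frac{1}{\sqrt{s-u}}\int_0^1 G_{s-u}(x,y)\,\d y\le \frac{1}{\sqrt{s-u}},
\]
so a direct integration gives
\[
\|\W_X(s,x)\|_k^2 \le c_0 k M^2\int_0^s\frac{\d u}{\sqrt{s-u}} = 2c_0 k M^2\sqrt{s},
\]
uniformly in $x\in[0,1]$ and $s>0$.

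To conclude, I take square roots to obtain $\|\W_X(s,x)\|_k\lesssim M\sqrt{k}\,s^{1/4}$, multiply by $s^\alpha\e^{-\beta s}$, and take the supremum over $(s,x)\in\sD(t)$:
\[
\cN_{k,\alpha,\beta,t}(\W_X)\lesssim M\sqrt{k}\,\sup_{s>0}\left( s^{\alpha+\frac14}\e^{-\beta s}\right).
\]
The elementary calculus identity $\sup_{s>0}(s^\gamma\e^{-\beta s})=(\gamma/(\beta\e))^\gamma$ with $\gamma=\alpha+\tfrac14$ shows that the last supremum is $\lesssim \beta^{-(\alpha+\frac14)}$, with implied constant depending only on $\alpha$. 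This yields the stated bound, and hence also $\W_X\in\cP_{k,\alpha,\beta}$.

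There is no real obstacle here: the argument is straightforward once Lemma \ref{lem:G} is invoked to convert $G^2$ into $G/\sqrt{s-u}$, which is precisely the mechanism that creates the $\beta^{-1/4}$ factor (matching the classical $1/4$-regularity of stochastic convolutions on $[0,1]$). The only mild care needed is in the low-moment case $k<2$, which is handled by Lyapunov monotonicity as noted above.
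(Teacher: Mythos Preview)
Your argument is correct and essentially identical to the paper's own proof: both apply the BDG inequality for Walsh integrals, use the bound $\|X\|_k\le M$, invoke Lemma~\ref{lem:G} to reduce $\int_0^1 G_{s-u}^2\,\d y$ to $(s-u)^{-1/2}$, integrate to obtain $\|\W_X(s,x)\|_k\lesssim M\sqrt{k}\,s^{1/4}$, and then optimize $s^{\alpha+1/4}\e^{-\beta s}$. Your explicit remark that the case $k\in[1,2)$ follows from Lyapunov monotonicity is a small addition the paper leaves implicit.
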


\begin{proof}
	The measurability properties of $\W_X$ follow from general properties of
	the Walsh stochastic integral; see \cite{Walsh}. It remains to establish
	the stated {\it a priori} bound for the $\cN_{k,\alpha,\beta,T}$-norm of $\W_X$.
	A suitable application of the Burkholder-Davis-Gundy
	inequality (see \cite{minicourse}) yields the following for all real numbers $k\ge1$, $t>0$, and $x\in[0\,,1]$:
	\begin{align*}
		\| (\W_X)(t\,,x) \|_k^2 &\le 4k\int_0^t\d s\int_0^1\d y\
			|G_{t-s}(x\,,y)|^2\|X(s\,,y)\|_k^2\\
		&\le 4kM^2 \int_0^t(t-s)^{-1/2}\,
			\d s \le  8kM^2\sqrt{t};
	\end{align*}
	see Lemma \ref{lem:G}. Take square roots, multiply both sides by $t^\alpha\exp(-\beta t)$
	and optimize over all $(t\,,x)$ to see that
	$\cN_{k,\alpha,\beta,T}(\W_X) \le M\sqrt{8k}
	\sup_{t>0}( t^{\alpha+\frac14}\e^{-\beta t}).$
	This has the desired result.
\end{proof}

\begin{lemma}\label{lem:IX-IX:x}
	Choose and fix a real number $\alpha>0$. Then,
	\[ 
		\sup_{t>0} \sup_{0\le x<z\le1}\left(
		t^\alpha\e^{-\beta t} \frac{\| (\W_X)(t\,,x) - (\W_X)(t\,,z) \|_k }{\sqrt{|x-z|}}
		\right) \lesssim \frac{M\sqrt{k}}{\beta^\alpha},
	\]
	uniformly for all $\beta,M>0$, and $k\ge1$, and all predictable
	random fields $X$ that satisfy \eqref{bdd}.
\end{lemma}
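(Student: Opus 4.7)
The plan is to combine the Burkholder-Davis-Gundy inequality with the time-integrated spatial increment bound from Lemma \ref{lem:G-G:x}, and then optimize the weight $t^\alpha \e^{-\beta t}$ in $t$. Concretely, by the standard form of BDG for Walsh integrals (see \cite{minicourse}) applied to $(\W_X)(t\,,x) - (\W_X)(t\,,z)$, together with the pointwise bound \eqref{bdd},
\begin{align*}
	\| (\W_X)(t\,,x) - (\W_X)(t\,,z) \|_k^2
	&\le 4k \int_0^t\d s\int_0^1\d y\ |G_{t-s}(x\,,y) - G_{t-s}(z\,,y)|^2 \|X(s\,,y)\|_\infty^2\\
	&\le 4kM^2 \int_0^t\d s\int_0^1\d y\ |G_{t-s}(x\,,y) - G_{t-s}(z\,,y)|^2,
\end{align*}
uniformly in $t>0$, $x,z\in[0\,,1]$, $k\ge1$, and $M,\beta>0$.

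Next, I would change variables $s\mapsto t-s$ in the inner integral and extend the upper limit of the $s$-integration from $t$ to $\infty$ (which is permissible since the integrand is nonnegative). Lemma \ref{lem:G-G:x} then bounds the resulting quantity by a universal constant multiple of $|x-z|$, so
\[
	\| (\W_X)(t\,,x) - (\W_X)(t\,,z) \|_k \lesssim M\sqrt{k}\,\sqrt{|x-z|},
\]
uniformly in all of the parameters.

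Finally, I would multiply by $t^\alpha\e^{-\beta t}$ and use the elementary estimate
\[
	\sup_{t>0} t^\alpha\e^{-\beta t} = \left(\frac{\alpha}{\e\beta}\right)^{\alpha} \lesssim \frac{1}{\beta^\alpha},
\]
with an implied constant depending only on $\alpha$, to deduce the claimed inequality. There is no real obstacle here: the difficulty has already been absorbed into Lemma \ref{lem:G-G:x}, which handles the full space-time $L^2$ increment of the Dirichlet heat kernel, and the boundedness assumption \eqref{bdd} eliminates any need for a weighted Hölder-type control on $X$ (contrast with the more delicate argument that will be needed once $X$ itself carries a singular weight at the origin).
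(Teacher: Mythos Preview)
Your proof is correct and follows essentially the same route as the paper's: BDG for Walsh integrals, the bound $|X|\le M$ a.s., Lemma~\ref{lem:G-G:x} for the $L^2$ spatial increment of the Dirichlet heat kernel, and finally $\sup_{t>0}t^\alpha\e^{-\beta t}\lesssim\beta^{-\alpha}$. The only cosmetic difference is that the paper writes $\|X(s\,,y)\|_k^2\le M^2$ rather than $\|X(s\,,y)\|_\infty^2$, and leaves the change of variables and the extension of the $s$-integral to $(0,\infty)$ implicit.
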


\begin{proof}
	Choose and fix some $\alpha\in(0\,,\frac12)$.
	A suitable application of the Burkholder-Davis-Gundy inequality (see
	\cite{minicourse}) yields the following: Uniformly for all real numbers $t,M>0$, $k\ge1$, and
	$x,z\in[0\,,1]$, and for every predictable space-time random field $X$
	that satisfies \eqref{bdd},
	\begin{align*}
		&\| (\W_X)(t\,,x) - (\W_X)(t\,,z) \|_k^2 \\
			 &\quad\le 4k\int_0^t\d s\int_0^1\d y\
			|G_{t-s}(x\,,y)-G_{t-s}(z\,,y)|^2 \| X(s\,,y)\|_k^2\\
		&\quad  \le4kM^2\int_0^t\d s\int_0^1\d y\
			|G_{t-s}(x\,,y)-G_{t-s}(z\,,y)|^2\lesssim kM^2|x-z|;
	\end{align*}
	see Lemma \ref{lem:G-G:x}. Because $t^\alpha \exp(-\beta t)\lesssim\beta^{-\alpha}$
	uniformly for all $t>0$, this proves the lemma.
\end{proof}

\begin{lemma}\label{lem:IX-IX:t}
	For every fixed $\alpha>0$,
	\[
		\adjustlimits\sup_{t>0}\sup_{\varepsilon\in(0,1)}
		\sup_{x\in[0,1]} \left( t^\alpha\e^{-\beta t}
		\frac{\| (\W_X)(t+\varepsilon\,,x) 
		- (\W_X)(t\,,x) \|_k}{\sqrt\varepsilon}\right)
		\lesssim \frac{M\sqrt{k}}{\beta^\alpha},
	\]
	uniformly for all real numbers $M,\beta>0$ and $k\ge1$, 
	and all predictable space-time random fields $X$ that satisfy \eqref{bdd}.
\end{lemma}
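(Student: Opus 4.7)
The plan is to parallel the proof of Lemma \ref{lem:IX-IX:x}, replacing the spatial heat-kernel estimate by a temporal one. I would first split the increment as
\[
    (\W_X)(t+\varepsilon\,,x) - (\W_X)(t\,,x) = A + B,
\]
where $A=\int_{\sD(t)} [G_{t+\varepsilon-s}(x\,,y)-G_{t-s}(x\,,y)]\,X(s\,,y)\,W(\d s\,\d y)$ collects the contribution from ``old times'' and $B=\int_{(t,t+\varepsilon]\times[0,1]} G_{t+\varepsilon-s}(x\,,y)\,X(s\,,y)\,W(\d s\,\d y)$ is the fresh-noise contribution. The Burkholder--Davis--Gundy inequality in the form used in Lemma \ref{lem:N(I)}, combined with the a.s.\ bound $\|X(s\,,y)\|_k\le M$ from \eqref{bdd}, reduces matters to controlling two deterministic heat-kernel integrals, each with prefactor $4kM^2$.

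The $B$ term is immediate from Lemma \ref{lem:G} and a change of variables: $\int_0^1 G_{t+\varepsilon-s}(x\,,y)^2\,\d y \le (t+\varepsilon-s)^{-1/2}$, so $\|B\|_k^2 \lesssim kM^2\sqrt\varepsilon$. For the $A$ term, the crucial ingredient is the $L^2$-in-$y$ estimate of the temporal kernel difference. Combining the $L^\infty_y$-bound $\|G_{r+\varepsilon}+G_r\|_{L^\infty_y}\le 2/\sqrt r$ (Lemma \ref{lem:G}) with the $L^1_y$-bound $\int_0^1|G_{r+\varepsilon}-G_r|\,\d y \le \log(1+\varepsilon/r)$ (Lemma \ref{lem:temp:log}) yields
\[
    \int_0^1 |G_{r+\varepsilon}(x\,,y)-G_r(x\,,y)|^2\,\d y \le \tfrac{2}{\sqrt r}\,\log\!\bigl(1+\tfrac{\varepsilon}{r}\bigr),\qquad r=t-s>0.
\]

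Splitting the $r$-integral at the natural scale $r=\varepsilon$ and handling each piece separately --- the low-$r$ region with the crude bound $\int_0^1|G_{r+\varepsilon}-G_r|^2\,\d y\le 4/\sqrt r$ (giving $\int_0^\varepsilon r^{-1/2}\,\d r\lesssim\sqrt\varepsilon$) and the high-$r$ region with $\log(1+\varepsilon/r)\le\varepsilon/r$ (giving $\int_\varepsilon^t 2\varepsilon r^{-3/2}\,\d r\lesssim\sqrt\varepsilon$) --- produces $\|A\|_k^2\lesssim kM^2\sqrt\varepsilon$. Combining the bounds for $A$ and $B$ gives $\|A+B\|_k\lesssim M\sqrt k\,\varepsilon^{1/4}$, and multiplying by $t^\alpha\e^{-\beta t}$ together with $\sup_{t>0}t^\alpha\e^{-\beta t}\propto\beta^{-\alpha}$ yields the desired estimate with the natural parabolic Hölder exponent $1/4$ in time.

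The main obstacle is the analysis of $A$: extracting the $O(\sqrt\varepsilon)$ scaling from $\int_0^t\!\int_0^1|G_{r+\varepsilon}-G_r|^2\,\d y\,\d s$ in spite of the $r^{-1/2}$ singularity of $G_r$ at $r=0$ requires combining the $L^\infty_y$ and $L^1_y$ bounds --- neither alone suffices --- and then splitting the $r$-integral at the characteristic scale $r=\varepsilon$. Once this deterministic estimate is in hand, the stochastic piece is a routine application of BDG parallel to the previous two lemmas of the section.
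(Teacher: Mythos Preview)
Your decomposition $A+B$ and your treatment of $B$ coincide with the paper's $Q_1+Q_2$. For $A$ (the paper's $Q_1$) the paper takes a slightly different route: instead of combining the $L^\infty_y$ bound of Lemma~\ref{lem:G} with the $L^1_y$ bound of Lemma~\ref{lem:temp:log} and then splitting the $r$-integral at $r=\varepsilon$, it uses the spectral expansion \eqref{G} and Parseval directly to obtain
\[
\int_0^t\d s\int_0^1\d y\,[G_{s+\varepsilon}(x\,,y)-G_s(x\,,y)]^2
\lesssim \sum_{n\ge1}\frac{(1-\e^{-n^2\pi^2\varepsilon/2})^2}{n^2}
\lesssim \sum_{n\ge1}\Bigl(\frac{n^2\varepsilon\wedge1}{n}\Bigr)^2
\lesssim\sqrt\varepsilon,
\]
the last step by splitting the sum at $n\sim\varepsilon^{-1/2}$. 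Your argument has the virtue of relying only on heat-kernel bounds already recorded in the paper and of not needing the explicit eigenfunctions; the paper's route avoids the $L^\infty\times L^1$ interpolation and the logarithm. Both give $\|A\|_k^2\lesssim kM^2\sqrt\varepsilon$ and hence the temporal H\"older exponent $\tfrac14$ that you correctly flag. The $\sqrt\varepsilon$ in the denominator of the lemma as stated (and in the paper's display \eqref{Q1}) should in fact read $\varepsilon^{1/4}$; the paper's own computation only yields this, and the slip is harmless for the application to Proposition~\ref{pr:sup:bdd} via Proposition~\ref{pr:norm<N}, which only requires some $\tau\in(0,1)$.
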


\begin{proof}
	We can write $\| (\W_X)(t+\varepsilon\,,x) - (\W_X)(t\,,x) \|_k\le Q_1+Q_2,$
	where
	\begin{align*}
		Q_1 &= \left\| \int_{(0,t)\times(0,1)} \left[G_{t+\varepsilon-s}(x\,,y) - G_{t-s}(x\,,y)\right]
			X(s\,,y)\, W(\d s\,\d y)\right\|_k,\\
		Q_2 &= \left\| \int_{(t,t+\varepsilon)\times(0,1)} G_{t+\varepsilon-s}(x\,,y)
			X(s\,,y)\, W(\d s\,\d y)\right\|_k.
	\end{align*}
	Identity \eqref{G} and a suitable form of the Burkholder-Davis-Gundy inequality
	together yield the following upper bound for $Q_1^2$:
	\begin{align*}
		&4k\int_0^t\d s\int_0^1\d y\
			\left[G_{t+\varepsilon-s}(x\,,y) - G_{t-s}(x\,,y)\right]^2\|X(s\,,y)\|_k^2\\
		&\le 4kM^2\int_0^t\d s\int_0^1\d y\
			\left[G_{s+\varepsilon}(x\,,y) - G_s(x\,,y)\right]^2\\
		&\le16kM^2\sum_{n=1}^\infty
			\left(1-\e^{-n^2\pi^2 \varepsilon/2}\right)^2
			\int_0^\infty \e^{-n^2\pi^2 s}\d s \\
			&
			\lesssim kM^2\sum_{n=1}^\infty
			\left(\frac{n^2\varepsilon\wedge 1}{n}\right)^2\lesssim kM^2\sqrt\varepsilon,
	\end{align*}
	valid uniformly for all $t,M>0$, $\varepsilon\in(0\,,1)$, $k\ge1$, $x\in[0\,,1]$,
	and predictable $X$ that satisfies \eqref{bdd}. This yields the inequality,
	\begin{equation}\label{Q1}
		\adjustlimits \sup_{t>0}\sup_{\varepsilon\in(0,1)}
		\sup_{x\in[0,1]} t^\alpha\e^{-\beta t}
		 Q_1 \varepsilon^{-1/2}
		\lesssim M\sqrt{k} \sup_{t>0}\left(t^\alpha\e^{-\beta t}\right)
		\propto M\sqrt{k} \beta^{-\alpha},
	\end{equation}
	where the implied constants depend only on $\alpha$.
	
	Similarly, we have the pointwise upper bound for $Q_2^2$:
	\begin{align*}
		& 4k\int_t^{t+\varepsilon}\d s\int_0^1\d y\
			[ G_{t+\varepsilon-s}(x\,,y)]^2\|X(s\,,y)\|_k^2 \\
			&\le4kM^2\int_t^{t+\varepsilon}\d s \int_0^1\d y\
			[ G_{t-s+\varepsilon}(x\,,y)]^2\\
		& \le 4k M^2\int_t^{t+\varepsilon}(t-s+\varepsilon)^{-1/2}\,\d s
			\le 4k M^2\sqrt{\varepsilon}
			\qquad\text{[see Lemma \ref{lem:G}]}.
	\end{align*}
	Thus, \eqref{Q1} continues to hold when $Q_1$ is replaced by $Q_2$.
	Combine the resulting estimates for $Q_1$ and $Q_2$ in order to finish the proof.
\end{proof}

\section{The Lipschitz case with $L^2$ initial data}

Our proof of Theorem \ref{th:main} requires that we first analyze a more general SPDE than
\eqref{SHE:1}. Namely, let us  consider the following generalization of \eqref{SHE:1} in which
the drift and diffusion coefficients are now allowed to be time-dependent functions:
\begin{equation}\label{SHE}
	\partial_t u(t\,,x) = \tfrac12 \partial^2_x u(t\,,x) + b(t\,,u(t\,,x)) + \sigma(t\,,u(t\,,x)) \dot{W}(t\,,x),
\end{equation}
where $(t\,,x)\in\sD(\infty)$, subject to $u(0\,,x) = u_0(x)$, for 
every $x \in [0\,,1]$, and  homogeneous zero-boundary Dirichlet boundary condition.
Throughout this section, we make the following  assumptions on the coefficients:
\begin{assumption}\label{cond-dif2}
	$b,\sigma:(0\,,\infty)\times\R\to\R$ satisfy the following:
	\begin{compactenum}[\indent\indent(1)]
		\item[\textnormal{(1)}] $M_b=\sup_{z\in\R}|b(0\,,z)|<\infty$ and
			there exist $1\le K_b,L_b<\infty$ such that 
			\begin{equation}\label{LKL}
				\lip(b(t))\le K_b+L_b\log_+(1/t)
				\quad\text{uniformly for all $t>0$;}
			\end{equation}
		\item[\textnormal{(2)}] $M_\sigma=\sup_{(t,z)\in(0\,,\infty)\times\R}|\sigma(t\,,z)|$
			and $\lip(\sigma)$ are both finite;
		\item[\textnormal{(3)}] $K_b > L_b\log(8K_b) + M_\sigma^2+ M_b^4.$
	\end{compactenum}
\end{assumption}

\begin{remark}
	The key restriction here is set by \eqref{LKL} and the conditions that
	$M_b\vee M_\sigma\vee\lip(\sigma)<\infty$. 
	The remaining conditions -- namely that $1\le K_b,L_b<\infty$ and (3) holds --
	are assumed without incurring loss in generality
	since we can always choose larger $K_b$ and $L_b$ that still satisfy
	\eqref{LKL}.
\end{remark}

We recall \eqref{D} and that a predictable random field $u$ is a \emph{mild
solution} to \eqref{SHE} when for all $(t\,,x)\in\sD(\infty)$, 
\begin{equation}\label{mild_SHE}
	u(t\,,x) = ( \cG_tu_0)(x) + I_b(t\,,x) + J_\sigma(t\,,x)\quad\text{ a.s.},
\end{equation}
where, $\{\cG_t\}_{t\ge0}$ denotes the heat semigroup, and
\begin{equation}\label{Ib:Js}\begin{split}
	I_b(t\,,x) & = \int_{\sD(t)} G_{t-s}(x\,,y)b(s\,,u(s\,,y))\,\d s\,\d y,\\
	J_\sigma(t\,,x) &  = \int_{\sD(t)} G_{t-s}(x\,,y)\sigma(s\,,u(s\,,y))\,W(\d s\,\d y).
\end{split}\end{equation}
We pause to observe that, in the notation of \eqref{int:L} and \eqref{int:W},
$I_b = \L_{b(\cdot,u)}$ and $J_b=\W_{\sigma(\cdot,u)}$.

This section is naturally divided in four parts. In the first part (\S\ref{sec:lip1})
we establish the existence of a solution to \eqref{SHE} (under the hypotheses
of Assumption \ref{cond-dif2}) and establish an \emph{a priori} energy-type
bound. That is followed by a brief discussion of regularity theory for that solution
(\S\ref{sec:regularity}), then  uniqueness of the solution (\S\ref{sec:unique}),
and finally a stability theorem (\S\ref{sec:stability}) which yields also a comparison 
result. The four subsections follow. 

\subsection{Existence}\label{sec:lip1}
The following is the main result of this subsection.

\begin{theorem} \label{thm_lip}
	If $A>1$ satisfies
	\begin{equation}\label{cond:K}
		L_bK_b^{-1}\log(8AK_b)<1,
	\end{equation} 
	then there exists a predictable random field
	$u=\{u(t\,,x)\}_{(t,x)\in \sD(\infty)}$ that is a mild solution to \eqref{SHE}
	and satisfies the following sub-Gaussian moment bound:
	\[
		\E\left( |u(t\,,x)|^k\right)
		\le (2A/t)^{k/4}\left( \|u_0\|_{L^2[0,1]} +  
		\sqrt{k} \right)^k \e^{4AK_b k t},
	\]
	valid uniformly for all numbers $k\ge1$ and all pairs $(t\,,x)\in\sD(\infty)$.
\end{theorem}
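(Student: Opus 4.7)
The plan is to construct $u$ by Picard iteration in the weighted $L^k$-norm $\cN_{k,\alpha,\beta,\infty}$ with $\alpha = 1/4$ and $\beta = 4AK_b$, and to read off the stated sub-Gaussian bound as an a priori estimate that is invariant under one step of the iteration. Writing $\cN(X) := \cN_{k,\alpha,\beta,\infty}(X)$ and using the notation \eqref{int:L}--\eqref{int:W}, set $u^{(0)}(t,x) := (\cG_t u_0)(x)$ and inductively
\[
	u^{(n+1)} := \cG_\cdot u_0 + \L_{b(\cdot,u^{(n)})} + \W_{\sigma(\cdot,u^{(n)})}.
\]
The goal is to show $\cN(u^{(n)}) \le M_k := (2A)^{1/4}(\|u_0\|_{L^2[0,1]} + \sqrt{k})$ for every $n$ and $k \ge 1$; each iterate is manifestly predictable, so is the eventual limit.

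For the individual blocks I would argue as follows. Cauchy--Schwarz and Parseval on \eqref{G} give $|(\cG_t u_0)(x)| \lesssim t^{-1/4}\|u_0\|_{L^2[0,1]}$, hence $\cN(\cG_\cdot u_0) \lesssim \|u_0\|_{L^2[0,1]}$. For the drift, the bound $|b(s,u)| \le M_b + (K_b + L_b\log_+(1/s))|u|$, the pointwise estimate $\|u^{(n)}(s,y)\|_k \le s^{-1/4}\e^{\beta s}\cN(u^{(n)})$, Minkowski's inequality, and Lemmas~\ref{lem:G}, \ref{lem:elem}, \ref{lem:rv} applied with $\chi = 0, 1$ combine to give
\[
	\cN(\L_{b(\cdot,u^{(n)})}) \le c_1\frac{M_b}{\beta^{5/4}} + c_2\frac{K_b + L_b\log\beta}{\beta}\,\cN(u^{(n)}).
\]
Since $|\sigma(s,u)| \le M_\sigma$ deterministically, Lemma~\ref{lem:N(I)} applies directly:
\[
	\cN(\W_{\sigma(\cdot,u^{(n)})}) \le c_3\frac{M_\sigma\sqrt{k}}{\beta^{1/2}}.
\]

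To close the induction I would use \eqref{cond:K} in the form $L_b\log(4AK_b) < K_b$, which gives $K_b + L_b\log\beta \le 2K_b$ at $\beta = 4AK_b$, so the drift coefficient of $\cN(u^{(n)})$ is bounded by $c_2/(2A) =: \theta$, strictly less than $1$ for $A$ large enough in terms of the universal constant $c_2$. The leftover constants fit into $(1-\theta)M_k$ via Assumption~\ref{cond-dif2}(3): $M_b^4 < K_b$ makes $M_b/\beta^{5/4}$ very small, and $M_\sigma^2 < K_b$ forces $M_\sigma\sqrt{k}/\beta^{1/2} < \sqrt{k}/(2\sqrt{A})$, which is comfortably absorbed by the $\sqrt{k}$-part of $M_k = (2A)^{1/4}(\|u_0\|_{L^2[0,1]} + \sqrt{k})$. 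Hence $\cN(u^{(n)}) \le M_k$ propagates through the iteration, and any limit of the iterates inherits the bound, yielding exactly the moment inequality in the statement.

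For convergence of the sequence I would repeat the same estimates on the differences $u^{(n+1)} - u^{(n)}$, using $\lip(b(s)) \le K_b + L_b\log_+(1/s)$ and $|\sigma(s,u) - \sigma(s,v)| \le \lip(\sigma)|u-v|$. The drift contribution contracts by the same factor $\theta$, while the noise difference -- handled via BDG together with $\int_0^1 G_s(x,y)^2\,\d y \lesssim s^{-1/2}$ (Lemma~\ref{lem:G}) and a Beta-integral bound -- yields a factor proportional to $\lip(\sigma)\sqrt{k}\,\beta^{-1/4}$. Since this factor depends on $k$, I would run the Cauchy argument at an inflated weight $\beta^* = \beta^*(k) \ge \beta$, which only reinforces the a priori bound (larger $\beta$ gives smaller $\cN$-norm); strong convergence in $\cP_{k,1/4,\beta^*}$ then allows passage to the limit in \eqref{mild_SHE} and yields a predictable mild solution $u$ to \eqref{SHE}. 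The main obstacle is the bookkeeping of universal constants: one has to verify that \eqref{cond:K} together with Assumption~\ref{cond-dif2}(3) provides exactly enough slack to absorb every constant-plus-$\sqrt{k}$ contribution into $(1-\theta)M_k$ with the $k$-independent weight $\beta = 4AK_b$.
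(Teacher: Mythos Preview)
Your proposal is correct and follows the same route as the paper: Picard iteration measured in the norm $\cN_{k,1/4,\beta,\infty}$, the three-term decomposition $\cG_\cdot u_0 + \L_{b(\cdot,u^{(n)})} + \W_{\sigma(\cdot,u^{(n)})}$, the same block estimates (Lemmas~\ref{lem:G}, \ref{lem:rv}, \ref{lem:N(I)}), and the choice $\beta = 4AK_b$ to make the drift coefficient a strict contraction via \eqref{cond:K}, with Assumption~\ref{cond-dif2}(3) absorbing the residual constants into the $\sqrt{k}$ budget.

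The one place where you deviate is the Cauchy step, and there you are in fact more careful than the paper. The paper asserts $\cN_{k,1/4,\beta,T}(U_{n+1}-U_n)\le\tfrac12\cN_{k,1/4,\beta,T}(U_n-U_{n-1})$ with the \emph{same} $\beta=4AK_b$ for every $k$, but the BDG estimate on $J_{\sigma,n}-J_{\sigma,n-1}$ produces a multiplicative factor $\asymp\lip(\sigma)\sqrt{k}\,\beta^{-1/4}$, which is not below $\tfrac12$ for large $k$ at that fixed $\beta$. Your fix---run the contraction at an inflated $\beta^*(k)\ge\beta$ and keep the $k$-independent a~priori bound at $\beta=4AK_b$ separately, passing it to the limit by Fatou---closes this gap cleanly and leaves the stated moment inequality intact.
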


\begin{proof}
	Recall \eqref{D} and let $U_0(t\,,x) = (\cG_t u_0)(x)$ for all $t>0$ and $x\in[0\,,1]$,
	and iteratively define $U_1,U_2,\ldots:\sD(\infty)\to\R$ via
	\begin{equation}\label{picard:n}
		U_{n+1}(t\,,x) = U_0(t\,,x) + I_{b,n}(t\,,x) + J_{\sigma,n}(t\,,x)
		\quad\forall t>0,\ x\in[0\,,1],
	\end{equation}
	where\footnote{In the notation of \eqref{int:L} and \eqref{int:W}, 
	$I_{b,n}=\L_{b(\cdot,U_n)}$ and $J_{\sigma,n} = \W_{\sigma(\cdot,U_n)}$.}
	\begin{equation}\label{Ibn:Jsn}\begin{split}
		I_{b,n}(t\,,x) & = \int_{\sD(t)} G_{t-s}(x\,,y) b(s\,,U_n(s\,,y)) \,\d y \,\d s,\\
		J_{\sigma,n}(t\,,x) & = \int_{\sD(t)} G_{t-s}(x\,,y) \sigma(s\,,U_n(s\,,y)) \, W(\d s\,\d y).
	\end{split}\end{equation}
	[Compare with \eqref{Ib:Js}.]
	In light of \eqref{mild_SHE}, the preceding 
	defines a Picard iteration scheme for the approximation of the solution to our SPDE \eqref{SHE}.
	We study in turn the three quantities on the right-hand side of \eqref{picard:n}.
	
	According to Lemma \ref{lem:G}, 
	$(\int_0^1|G_t(x\,,y)|^2\,\d y)^{1/2}\le t^{-1/4}$ pointwise. Therefore,
	H\"older's inequality yields $|U_0(t\,,x)| \le t^{-1/4} \|u_0\|_{L^2[0,1]}$	
	for all $t>0$ and $x\in[0\,,1]$. In light of \eqref{N}, it follows that
	\begin{equation}\label{N(U0)}
		\mathcal{N}_{k,\frac14,\beta,T}(U_0) \le\|u_0\|_{L^2[0,1]}
		\quad\forall \beta>0,\ k\ge1.
	\end{equation}
	This is the desired estimate for the first term in \eqref{picard:n}.
	
	We now consider the second term on the right-hand side of
	\eqref{picard:n}. Thanks to Assumption \ref{cond-dif2},
	$|b(t\,,z)|\le M_b + |z|\{ K_b + L_b\log_+(1/t)\}$ for all 
	$t>0$ and $z\in\R$. Therefore,
	for all $k\ge 1$, $n\in\Z_+$, $t\in(0\,,T]$, and $x\in[0\,,1]$,
	\begin{align*}
		&\|I_{b,n}(t\,,x)\|_k \le\int_0^t\d s\int_0^1\d y\
			G_{t-s}(x\,,y)\|b(s\,, U_n(s\,,y))\|_k\\
		&\le M_b\int_0^t\d s\int_0^1\d y\
			G_{t-s}(x\,,y) + K_b\int_0^t\d s\int_0^1\d y\
			G_{t-s}(x\,,y)\| U_n(s\,,y)\|_k \\
		&\qquad+  L_b\int_0^t\d s\int_0^1\d y\
			G_{t-s}(x\,,y)\log_+(1/s)\| U_n(s\,,y)\|_k\\
		&\le M_b t+ K_b\cN_{k,\frac14,\beta,T}(U_n)\int_0^t s^{-\frac14} \e^{-\beta s}\,\d s\\
		&\qquad 
			+ L_b\cN_{k,\frac14,\beta,T}(U_n)\int_0^t
			s^{-\frac14}\log_+(1/s)\e^{-\beta s}\,\d s.
	\end{align*}
	Since $\log_+(a)\ge1$ for all $a\ge0$, and 
	$\sup_{t>0}[t^{1+\frac14} \exp(-\beta t)]\propto \beta^{-(1+\frac14)}<\beta^{-\frac14}$,
	uniformly for all $\beta\ge\e$, it follows from
	Lemma \ref{lem:rv} that, 
	uniformly for every $k\ge1$, $\beta\ge\e$, and $n\in\Z_+$,
	\begin{equation}\label{N(Ibn)}
		\cN_{k,\frac14,\beta,T}(I_{b,n}) \lesssim
		M_b\beta^{-\frac14}  + \left[ K_b\beta^{-1} 
		+ L_b\beta^{-1}\log\beta\right]
		\cN_{k,\frac14,\beta,T}(U_n),
	\end{equation}
	where the implied constant is universal. The preceding is the desired estimate for the second
	term on the right-hand side of \eqref{picard:n}.
	
	The third quantity in \eqref{picard:n} is estimated already by Lemma \ref{lem:N(I)}
	as follows: For every $k\ge 1$, $\beta\ge\e$, and $n\in\Z_+$,
	\begin{equation}\label{N(Jsn)}
		\cN_{k,\frac14,\beta,T}(J_{\sigma,n}) \lesssim M_\sigma \sqrt{k/\beta},
	\end{equation}
	and note that the implied constant is universal.
	
	Combine \eqref{N(U0)}, \eqref{N(Ibn)}, and \eqref{N(Jsn)}
	in order to conclude that there exists a constant $C\ge1$ such that 
	\begin{align}\nonumber
		\cN_{k,\frac14,\beta,T}(U_{n+1})
			&\le \|u_0\|_{L^2[0,1]} + C M_b\beta^{-\frac14}  \\
			&+ C\left[ K_b\beta^{-1} 
			+ L_b\beta^{-1}\log\beta\right]
			\cN_{k,\frac14,\beta,T}(U_n)
		 + CM_\sigma \sqrt{k/\beta},
			\label{recur:U}
	\end{align}
	uniformly for all $k\ge 1$, $\beta\ge\e$, and $n\in\Z_+$. 
	
	The inequality \eqref{recur:U} holds for all $\beta\ge\e$; we need to now choose 
	$\beta$ (approximately) optimally. That choice critically depends on the relative
	sizes of the various loose parameters $M_b$, $K_b$, $L_b$, and $k$. For
	our purposes, $K_b$ should be viewed as the largest of those loose parameters;
	see Assumption \ref{cond-dif2} and \eqref{cond:K}.
	Therefore, we now choose and fix $\beta$ as follows in order to minimize the effect of
	$K_b$ on the size of $U_{n+1}$:
	\begin{equation}\label{beta}
		\beta=4CK_b.
	\end{equation} 
	Assumption \ref{cond-dif2} ensures, among other things, that $K_b\ge1$ whence
	$\beta\ge4>\e$, so the preceding estimates are applicable for this choice of $\beta$.
	For this particular choice of $\beta$, the recursion \eqref{recur:U} simplifies to the following: 
	\begin{align*}
		\cN_{k,\frac14,\beta,T}(U_{n+1})
			&\le \|u_0\|_{L^2[0,1]} + A M_bK_b^{-\frac14}  + 
			\tfrac14\left[L_bK_b^{-1}\log(AK_b) + 1\right]
			\cN_{k,\frac14,\beta,T}(U_n)\\
		&\hskip.5in + A M_\sigma \sqrt{k/K_b},
	\end{align*}
	for a constant $A\ge C$. The  number $A$
	was announced in the statement of Theorem \ref{thm_lip}, and thanks to 
	the condition \eqref{cond:K}, the preceding implies that, 
	with the same parameter dependencies as before, 
	\begin{equation}\label{N(U)<N(U)}
		\cN_{k,\frac14,\beta,T}(U_{n+1})
		\le B +\tfrac12 \cN_{k,\frac14,\beta,T}(U_n),
	\end{equation}
	where $B=\|u_0\|_{L^2[0,1]} + A M_bK_b^{-1/4}  + 
	A M_\sigma \sqrt{k/K_b}.$ The preceding display is an iterative inequality
	indexed by $n$, and can be solved, thanks to \eqref{N(U0)}, in order to yield 
	$\limsup_{n\to\infty}\cN_{k,\frac14,\beta,T}(U_n) \le 2B$. Thanks to
	\eqref{N}, the latter is another way to state the following:
	Uniformly for all $t\in(0\,,T]$, $x\in[0\,,1]$, and $k\ge1$,
	\begin{equation}\label{moment}\begin{split}
		&\limsup_{n\to\infty}\E\left( |U_n(t\,,x)|^k\right)
			\le (2B)^k t^{-k/4}  \exp(\beta kt)\\
		&\le 2^k t^{-k/4}\left( \|u_0\|_{L^2[0,1]} + A M_b K_b^{-1/4}  + 
			A M_\sigma \sqrt{k/K_b} \right)^k \e^{4AK_b kt}\\
		&\le (2A)^k t^{-k/4}\left( \|u_0\|_{L^2[0,1]} + 1   + 
			\sqrt{k} \right)^k \e^{4AK_b kt};
	\end{split}\end{equation}
	valid thanks to Part (3) of Assumption \ref{cond-dif2}, 
	\eqref{beta}, and the fact that $A\ge C\ge1$. 
	Because the constants do not depend on $T$, \eqref{moment}
	is in fact true for all $t>0$.
	Next we prove that $u(t\,,x)=\lim_{n\to\infty}U_n(t\,,x)$ exists in $L^k(\Omega)$
	for every $(t\,,x)\in\sD(T)$ and $k\ge1$. That, and a standard argument,
	which we skip, together imply that $u$ solves \eqref{SHE} up to time $t=T$. Moreover,
	\eqref{moment} implies the \emph{a priori} $L^k$-estimate
	of the theorem, thanks to Fatou's lemma. Thus, it remains to 
	prove that $\{U_n(t\,,x)\}_{n=1}^\infty$ is Cauchy in $L^k(\Omega)$ 
	for every $(t\,,x)\in\sD(\infty)$ and $k\ge1$. In order to do that, we replicate
	and adapt the calculation that led to \eqref{N(U)<N(U)} in order to find that,
	as long as $\beta$ is still defined by \eqref{beta},
	condition \eqref{cond:K} yields
	\begin{equation}\label{N(U-U)}
		\cN_{k,\frac14,\beta,T}(U_{n+1}-U_n)\le\tfrac12\cN_{k,\frac14,\beta,T}(U_n-U_{n-1})
		\qquad\forall n\in\N,\ T>0.
	\end{equation}
	We omit the details as they are very close to the details of
	the proof of \eqref{N(U)<N(U)}. Instead we note that, as a result, 
	$\sum_{n=1}^\infty\cN_{k,\frac14,\beta,T}(U_n-U_{n-1})<\infty$ for every $k\ge1$.
	Thank to \eqref{N}, this  implies the pointwise existence of $u(t\,,x)=\lim_{n\to\infty} U_n(t\,,x)$,
	where the limit holds in $L^k(\Omega)$ for every $k\ge1$.
	
	Embedded within this argument lies also the fact that, for the same choice of $\beta$ as in
	\eqref{beta}, and for all $k\ge1$ and $T>0$,
	\[
		\lim_{n\to\infty}\cN_{k,\frac14,\beta,T}(I_{b,n}-I_b)=
		\lim_{n\to\infty}\cN_{k,\frac14,\beta,T}(J_{\sigma,n}-J_\sigma)=0.
	\]
	See also \eqref{Ib:Js} and \eqref{Ibn:Jsn}.
	Therefore, \eqref{picard:n}, \eqref{mild_SHE}, and \eqref{mild_SHE:1} together imply  the 
	conclusion of the proof.
\end{proof}

\subsection{Regularity}\label{sec:regularity}
In this section we continue the discussion of \S\ref{sec:lip1} from which the notation
of the present section is derived as well. As the title of the section might suggest,
we now study the solution $u$ to \eqref{SHE}, which is an extension of the
SPDE \eqref{SHE:1} to the setting in which $b$ and $\sigma$ can be time-dependent.
Throughout this section, Assumption \ref{cond-dif2} is assumed as well, just as it was
in \S\ref{sec:lip1}.

Let us begin with the following well-known result, which is an immediate
consequence of \eqref{G} and Bessel's inequality.

\begin{lemma}\label{lem:IC}
	For every $t>0$, $\lim_{t\to0+}\cG_t u_0= u_0$  in $L^2[0\,,1]$.
\end{lemma}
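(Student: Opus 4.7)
The plan is to expand $u_0$ in the orthonormal basis of $L^2[0\,,1]$ that diagonalizes the Dirichlet Laplacian, read off the action of $\cG_t$ from the spectral expansion \eqref{G}, and then apply Parseval together with dominated convergence for series. The only inputs needed are the formula \eqref{G} for $G_t$ and the fact that $\{e_n\}_{n\ge1}$ defined by $e_n(y)=\sqrt{2}\sin(n\pi y)$ is a complete orthonormal basis of $L^2[0\,,1]$.

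First I would write $u_0=\sum_{n\ge1} c_n e_n$ in $L^2[0\,,1]$, where $c_n=\int_0^1 u_0(y) e_n(y)\,\d y$; by Bessel's equality (Parseval), $\|u_0\|_{L^2[0,1]}^2 = \sum_{n\ge1} c_n^2 <\infty$. For fixed $t>0$, the exponential factors $\exp(-n^2\pi^2 t/2)$ decay faster than any polynomial in $n$, so substituting \eqref{G} into \eqref{G_tf} and interchanging the sum and the $y$-integral is elementary and yields
\[
	(\cG_t u_0)(x) = \sum_{n\ge1} c_n\, e_n(x)\, \e^{-n^2\pi^2 t/2} \qquad \text{in } L^2[0\,,1].
\]

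Next I would apply Parseval a second time, to the difference $\cG_t u_0 - u_0 = \sum_{n\ge1} c_n(\e^{-n^2\pi^2 t/2}-1)\,e_n$, to obtain
\[
	\|\cG_t u_0 - u_0\|_{L^2[0,1]}^2 = \sum_{n\ge1} c_n^2 \bigl(1-\e^{-n^2\pi^2 t/2}\bigr)^2.
\]
Each summand tends to $0$ as $t\to 0+$, and each is bounded by $c_n^2$, whose sum is finite; the dominated convergence theorem for series then gives $\|\cG_t u_0 - u_0\|_{L^2[0,1]}\to 0$ as $t\to 0+$, completing the proof.

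There is essentially no obstacle here; the only mild point to check is the interchange of sum and integral in the derivation of the spectral expression for $\cG_t u_0$, which is justified by the Gaussian-in-$n$ decay of the exponentials for any fixed $t>0$.
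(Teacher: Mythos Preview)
Your proof is correct and is precisely the argument the paper has in mind: the paper merely states that the lemma ``is an immediate consequence of \eqref{G} and Bessel's inequality,'' and your spectral expansion plus Parseval/dominated-convergence computation is exactly that.
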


Recall the norms $\|\,\cdots\|_{\sC_1(\alpha\,,\beta)}$ from \eqref{sec:Banach}. 
The main result of this subsection is
the following \emph{a priori} bound. 

\begin{proposition}\label{pr:lip2}
	In the context of Theorem \ref{thm_lip}, $u$ is continuous on $(0\,,\infty)\times[0\,,1]$,
	and for every fixed $\alpha>\frac14$,
	\[
		\left\{\E\left( \| u \|_{\sC_1(\alpha,8AK_b)}^k\right)\right\}^{1/k}\lesssim
		\|u_0\|_{L^2[0,1]} + \sqrt k,
	\]
	where the implied constant does not depend on $(u_0,k)
	\in L^2([0\,,1])\times[1\,,\infty)$, nor does it depend on $(b\,,\sigma)$
	that satisfy Assumption \ref{cond-dif2} and \eqref{cond:K}.
\end{proposition}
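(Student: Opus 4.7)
The plan is to invoke Remark \ref{rem:norm<N} (the local-in-time version of Proposition \ref{pr:norm<N}) with $\beta=4AK_b$ and $\bar\beta=8AK_b=2\beta$, so that the prefactor $L$ in \eqref{L} depends only on $(\alpha,\bar\alpha)$ and therefore not on $(b,\sigma)$. The pointwise $L^k$-moment bound from Theorem \ref{thm_lip}, multiplied by $t^{1/4}\e^{-4AK_b t}$, already yields
\[
\cN_{k,1/4,4AK_b,1}(u)\lesssim \|u_0\|_{L^2[0,1]}+\sqrt{k},
\]
with an implied constant that depends only on $A$. What is left to do is bound the spatial and temporal weighted $L^k$-moduli of $u$ in the form demanded by \eqref{Kolm:cond}.

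To this end I would decompose $u=\cG_\cdot u_0+I_b+J_\sigma$ as in \eqref{mild_SHE} and treat each term separately. The stochastic convolution $J_\sigma=\W_{\sigma(\cdot,u)}$ has coefficient bounded deterministically by $M_\sigma$ (Assumption \ref{cond-dif2}(2)), so Lemmas \ref{lem:IX-IX:x} and \ref{lem:IX-IX:t} apply directly and produce spatial and temporal $L^k$-Hölder increments of exponent $1/2$ with constants of order $M_\sigma\sqrt k (4AK_b)^{-\alpha}$. For the deterministic term $\cG_t u_0$, the spectral expansion \eqref{G} together with Bessel's inequality gives, for every $\mu\in(0,1)$,
\[
|\cG_t u_0(x)-\cG_t u_0(y)|\lesssim \|u_0\|_{L^2[0,1]}\,t^{-(1+2\mu)/4}\,|x-y|^\mu,
\]
with an analogous estimate in time; the singularity as $t\to0$ is absorbed once one weighs by $t^{1/4+\mu/2}\e^{-\beta t}$. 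The delicate term is $I_b=\L_{b(\cdot,u)}$: from Assumption \ref{cond-dif2}(1),
\[
\|b(s,u(s,y))\|_k\le M_b+\bigl(K_b+L_b\log_+(1/s)\bigr)\|u(s,y)\|_k,
\]
and because $\log_+(1/s)\lesssim s^{-\epsilon}$ for every $\epsilon>0$, Theorem \ref{thm_lip} yields
\[
\cN_{k,1/4+\epsilon,4AK_b,1}(b(\cdot,u))\lesssim \|u_0\|_{L^2[0,1]}+\sqrt{k}.
\]
Lemmas \ref{lem:JX-JX:x} and \ref{lem:JX-JX:t}, applied with parameter $\alpha=1/4+\epsilon$, then deliver spatial and temporal Hölder $L^k$-increments for $I_b$ with the weight $t^{1/4+\epsilon}\e^{-4AK_b t}$, and with constants proportional to $\|u_0\|_{L^2[0,1]}+\sqrt k$ (uniformly in $\beta=4AK_b$ thanks to the $\beta^{-(1-\delta)}$ and $\beta^{-(1-\eta)}$ gains in those lemmas).

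To finish, I fix an arbitrary $\bar\alpha>\tfrac14$, choose $\alpha\in(\tfrac14,\bar\alpha)$, set $\epsilon=\alpha-\tfrac14>0$, and pick Hölder exponents $(\mu,\tau)\in(0,1)^2$ small enough that the three modulus estimates above are all available with the common weight $t^\alpha\e^{-4AK_b t}$; then, for every $k\ge 4(\tau^{-1}+\mu^{-1})$,
\[
\cN_{k,\alpha,4AK_b,1}(u)+C_1\lesssim \|u_0\|_{L^2[0,1]}+\sqrt{k},
\]
where $C_1$ denotes the modulus constant in Remark \ref{rem:norm<N}. That remark, applied with $\bar\beta=8AK_b=2\beta$, then yields the desired bound on $\E(\|u\|_{\sC_1(\bar\alpha,8AK_b)}^k)^{1/k}$; continuity of $u$ on $(0,\infty)\times[0,1]$ falls out of the same Kolmogorov-type increments via a standard two-parameter continuity theorem. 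The main obstacle is the logarithmic blow-up of $\lip(b(t))$ as $t\to0$, which forces a loss of an arbitrarily small $\epsilon$ in the weight exponent — this is exactly why the proposition requires the strict inequality $\bar\alpha>\tfrac14$ rather than equality, and why it is natural to employ two regimes of the weight ($\tfrac14$ for $u$ itself, $\tfrac14+\epsilon$ for $b(\cdot,u)$) at intermediate stages before collating them at the level $\alpha=\tfrac14+\epsilon$.
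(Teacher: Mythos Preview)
Your proposal is correct and follows the same overall strategy as the paper: decompose $u=\cG_\cdot u_0+I_b+J_\sigma$, establish weighted $L^k$-H\"older increment bounds for each summand, and feed these together with the $\cN_{k,1/4,4AK_b,1}$ bound from Theorem~\ref{thm_lip} into Remark~\ref{rem:norm<N} with $\bar\beta=2\beta$ so that $L$ in \eqref{L} is independent of $(b,\sigma)$.

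There are two small tactical differences worth noting. First, the paper does not derive increment estimates for the deterministic term $\cG_t u_0$ at all; since $|\cG_t u_0(x)|\le\|u_0\|_{L^2[0,1]}t^{-1/4}$ pointwise, one gets $\|\cG u_0\|_{\sC_1(\alpha,\beta)}\le\|u_0\|_{L^2[0,1]}$ directly for every $\alpha\ge\tfrac14$, and Remark~\ref{rem:norm<N} is applied only to $I_b$ and $J_\sigma$ separately before combining via the triangle inequality. Second, the paper absorbs the logarithmic blow-up of $\lip(b(t))$ not by the substitution $\log_+(1/s)\lesssim s^{-\epsilon}$ but by using the $|\log_+(1/s)|^\chi$ factor that is already built into Lemmas~\ref{lem:G-G:L1:x} and~\ref{lem:G-G:L1:t}; this lets the paper keep the weight exponent at $\tfrac14$ throughout the $I_b$ estimates rather than passing to $\tfrac14+\epsilon$. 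Both variants reach the same conclusion, and your explanation of why the strict inequality $\alpha>\tfrac14$ is needed (the $\epsilon$-loss, equivalently the gap $\bar\alpha>\alpha$ in Proposition~\ref{pr:norm<N}) is accurate.
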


\begin{proof}
	It follows readily from \eqref{N} and Theorem \ref{thm_lip} that
	\begin{equation}\label{N(u)}
		\cN_{k,\frac14,4AK_b,1}(u)
		\le (2A)^{1/4}\left( \|u_0\|_{L^2[0,1]} + \sqrt{k} \right),
	\end{equation}
	for all $\beta\ge 4AK_b$, $k\ge1$.
	Now, consider \eqref{N(Ibn)}, and let $n\to\infty$ in both inequalities in
	order to obtain the following:
	For every $k\ge1$, and $\beta\ge\e$, 
	\begin{equation}\label{N(I_b)}\begin{split}
		\cN_{k,\frac14,\beta,1}(I_b) &\lesssim M_b\beta^{-1/4}  + \left[ K_b\beta^{-1} 
			+ L_b\beta^{-1}\log\beta\right] \cN_{k,\frac14,\beta,1}(u),\\
		&\lesssim M_b\beta^{-1/4}  + \left[ K_b\beta^{-1} 
			+ L_b\beta^{-1}\log\beta\right] \left( \|u_0\|_{L^2[0,1]} + \sqrt{k} \right),
	\end{split}\end{equation}
	thanks to \eqref{N(u)} and the fact that $\beta\mapsto\cN_{k,\frac14,\beta,1}(u)$
	is non increasing; see \eqref{N}. We also pause to emphasize that 
	the implied constants in \eqref{N(I_b)} do not depend $(u_0,k\,,\beta)
	\in L^2([0\,,1])\times[1\,,\infty)\times[4AK_b\,,\infty)$, nor on $(b\,,\sigma)$
	that satisfy Assumption \ref{cond-dif2} and \eqref{cond:K}.
	
	Next, let us apply \eqref{N(u)} together with Lemma \ref{lem:JX-JX:x} to see that,
	because $4AK_b\ge 4$,
	\begin{align}\nonumber
		\sup_{t\in(0,1]}\sup_{x,z\in[0,1]:x\neq z}\left( 
			t^{1/4} \e^{-\beta t}\frac{\| I_b(t\,,x) - I_b(t\,,z) \|_k}{|x-z|^{1/4}}
			\right) & \lesssim \frac{\cN_{k,\frac14,\beta,1}(I_b)}{\beta^{3/4}}\\
		&\le\cN_{k,\frac14,\beta,1}(I_b),
			\label{Ib-Ib:x}
	\end{align}
	where the implied constants do not depend $(u_0,k\,,\beta)
	\in L^2([0\,,1])\times[1\,,\infty)\times[4AK_b\,,\infty)$, nor on $(b\,,\sigma)$
	that satisfy Assumption \ref{cond-dif2} and \eqref{cond:K}.
	Likewise, Lemma \ref{lem:JX-JX:t} yields the following:
	\begin{equation}\label{Ib-Ib:t}
		\sup_{\varepsilon\in(0,1)}\sup_{(t,x)\in \sD(1)}
		\left( t^\alpha\e^{-\beta t}
		\frac{\| I_b(t+\varepsilon\,,x) - I_b(t\,,x) \|_k}{\varepsilon^{1/4}} \right) 
		\lesssim \cN_{k,\frac14,\beta,1}(I_b),
	\end{equation}
	where the implied constants do not depend $(u_0,k\,,\beta)
	\in L^2([0\,,1])\times[1\,,\infty)\times[4AK_b\,,\infty)$, nor on $(b\,,\sigma)$
	that satisfy Assumption \ref{cond-dif2} and \eqref{cond:K}.

	Thanks to \eqref{Ib-Ib:x} and \eqref{Ib-Ib:t}, we may
	apply Proposition \ref{pr:norm<N}; see also Remark \ref{rem:norm<N}.
	Because of that proposition and the particular form of the constant $L$ in 
	\eqref{L}, applied with $\alpha=\frac14$, $\bar\alpha=\kappa$,
	$\beta=4AK_b$, and $\bar\beta=8AK_b$,  and thanks to one or two 
	back-to-back appeals to Assumption \ref{cond-dif2}
	and \eqref{cond:K},  it follows from \eqref{N(u)} that for every $\kappa>1/4$,
	\begin{equation}\label{norm:Ib}\begin{split}
		&\left\{\E\left( \| I_b \|_{\sC_1(\kappa,8AK_b)}^k\right)\right\}^{1/k}\lesssim
			\cN_{k,\frac14,8AK_b,1}(I_b)\\\
		&\lesssim \frac{M_b}{K_b^{1/4}}  + \left[ 1
			+ \frac{L_b}{K_b}\log(8AK_b)\right] \left( \|u_0\|_{L^2[0,1]} + \sqrt{k} \right)
			\lesssim \|u_0\|_{L^2[0,1]} + \sqrt{k},
	\end{split}\end{equation}
	where the implied constants do not depend $(u_0,k)
	\in L^2([0\,,1])\times[1\,,\infty)$, nor on $(b\,,\sigma)$
	that satisfy Assumption \ref{cond-dif2} and \eqref{cond:K}.
	
	Next, consider \eqref{N(Jsn)},
	and let $n\to\infty$ in both inequalities in
	order to obtain the following:
	\begin{equation}\label{N(IJ)}
		\cN_{k,\frac14,\beta,1}(J_\sigma)
		\lesssim M_\sigma \sqrt{k/\beta},
	\end{equation}
	where again the implied constant does not depend $(u_0,k\,,\beta)
	\in L^2([0\,,1])\times[1\,,\infty)\times[4AK_b\,,\infty)$, nor on $(b\,,\sigma)$
	that satisfy Assumption \ref{cond-dif2} and \eqref{cond:K}.
	We may apply \eqref{N(IJ)} together with Lemmas \ref{lem:IX-IX:x} 
	and  \ref{lem:IX-IX:t} in order to see that,
	\begin{align}
		&\sup_{t\in(0,1]}\sup_{x,z\in[0,1]:x\neq z}\left( 
			t^{1/4} \e^{-\beta t}\frac{\| J_\sigma(t\,,x) - J_\sigma(t\,,z) \|_k}{\sqrt{|x-z|}}
			\right)\lesssim M_\sigma\frac{\sqrt{k}}{\beta^{1/4}},
			\label{Js-Js:x}\\
			\intertext{and}
		&\sup_{\varepsilon\in(0,1)}\sup_{(t,x)\in \sD(1)}
			\left( t^{1/4}\e^{-\beta t}
			\frac{\| J_\sigma(t+\varepsilon\,,x) - J_\sigma(t\,,x) \|_k}{\sqrt\varepsilon} \right) \lesssim
			M_\sigma\frac{\sqrt{k}}{\beta^{1/4}},
			\label{Js-Js:t}
	\end{align}
	where the implied constants do not depend $(u_0,k)
	\in L^2([0\,,1])\times[1\,,\infty)$, nor on $(b\,,\sigma)$
	that satisfy Assumption \ref{cond-dif2} and \eqref{cond:K}.
	
	Proposition \ref{pr:norm<N}, used in conjunction with \eqref{N(IJ)}, \eqref{Js-Js:x},
	and \eqref{Js-Js:t}, yields the following: For every fixed $\kappa>\frac14$,
	\begin{equation}\label{norm:Js}
		\left\{\E\left( \| J_\sigma \|_{\sC_1(\kappa,8AK_b)}^k\right)\right\}^{1/k}\lesssim
		\sqrt k,
	\end{equation}
	where the implied constants do not depend $(u_0,k)
	\in L^2([0\,,1])\times[1\,,\infty)$, nor on $(b\,,\sigma)$
	that satisfy Assumption \ref{cond-dif2} and \eqref{cond:K}.
	
	Finally, Lemma \ref{lem:G} and the Cauchy-Schwarz inequality together imply
	that $|(\cG_t u_0)(x)|\le \|u_0\|_{L^2[0,1]}t^{-1/4}$
	for every $t>0$ and $x\in[0\,,1]$,
	and hence
	\begin{equation}\label{norm:Gu}
		\adjustlimits\inf_{\alpha\ge\frac14}\sup_{\beta>0}\|\cG u_0\|_{\sC_1(\alpha,\beta)}
		=\sup_{\beta>0}\|\cG u_0\|_{\sC_1(\frac14,\beta)}\le \|u_0\|_{L^2[0,1]}.
	\end{equation}
	The proposition follows from applying the triangle inequality to \eqref{mild_SHE:1}
	using \eqref{norm:Ib}, \eqref{norm:Js}, and
	\eqref{norm:Gu}.
\end{proof}

\subsection{Uniqueness}\label{sec:unique}
For every space-time random field $X=\{X(t\,,x)\}_{t>0,x\in[0,1]}$ we 
may define
\begin{equation}\label{M}
	\cM_t (X) = \sup_{s\in(0,t]}\sup_{x\in[0,1]}
	\left[ s^{1/4}\|X(s\,,x)\|_2\right]
	\qquad\forall t>0.
\end{equation}
Thanks to \eqref{N},
\[
	\e^{-\beta t}\cM_t(X) \le \cN_{2,\frac14,\beta,t}(X) \le \cM_t(X)
	\qquad\forall t,\beta>0.
\]
Since the \emph{a priori} moment estimate of Theorem \ref{thm_lip}
can be recast as follows
\[
	\sup_{t>0}\cN_{2,\frac14,4AK_b,t}(u) \le 2A\left(\|u_0\|_{L^2[0,1]}+\sqrt 2\right),
\]
we see that $\cM_t(u)<\infty$ for all $t>0$. The following is a uniqueness type of converse.

\begin{lemma}\label{lem:unique}
	In the context of Theorem \ref{thm_lip} -- see also Assumption \ref{cond-dif2}
	and \eqref{cond:K} -- suppose that $\tilde{u}$ is a random-field
	solution to \eqref{SHE} subject to initial data $u_0\in L^2[0\,,1]$, and such that
	$\cM_t(\tilde{u})<\infty$ for all $t>0$. Then, $\tilde{u}$ is a modification of $u$.
\end{lemma}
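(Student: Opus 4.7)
The plan is to apply the contraction that drives the Picard iteration in the proof of Theorem \ref{thm_lip} to the \emph{difference} $v=u-\tilde u$. Since $u$ and $\tilde u$ are both mild solutions of \eqref{SHE} with the same $L^2$ initial condition, the heat contribution $\cG_t u_0$ cancels and $v$ is expressed as a Lebesgue-plus-Walsh convolution of the Lipschitz differences $b(s,u)-b(s,\tilde u)$ and $\sigma(s,u)-\sigma(s,\tilde u)$. The goal is to derive, for every $T>0$ and for all $\beta=\beta(K_b,L_b,\lip(\sigma))$ sufficiently large, the strict contraction
\[
    \cN_{2,\frac14,\beta,T}(v) \le \tfrac12\cN_{2,\frac14,\beta,T}(v),
\]
and then to invoke the hypothesis $\cM_t(\tilde u)<\infty$ to know \emph{a priori} that $\cN_{2,\frac14,\beta,T}(v)$ is finite; the displayed inequality will then force it to vanish, so that $\|u(t,x)-\tilde u(t,x)\|_2=0$ at every point, as desired.

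Consider the drift first. By Assumption \ref{cond-dif2}(1), $|b(s,a)-b(s,a')|\le[K_b+L_b\log_+(1/s)]|a-a'|$; Minkowski's inequality, combined with the bound $\int_0^1 G_{t-s}(x,y)\,\d y\le 1$ from Lemma \ref{lem:G} and with Lemma \ref{lem:rv}, yields exactly as in the derivation of \eqref{N(Ibn)} that
\[
    \cN_{2,\frac14,\beta,T}(I_b^u-I_b^{\tilde u}) \lesssim \frac{K_b+L_b\log\beta}{\beta}\,\cN_{2,\frac14,\beta,T}(v).
\]
The stochastic convolution is less automatic: the integrand $\sigma(s,u)-\sigma(s,\tilde u)$ is controlled by $\lip(\sigma)|v(s,y)|$, but $v$ is not deterministically bounded, so Lemma \ref{lem:N(I)} does not apply off the shelf. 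I would instead reopen Burkholder-Davis-Gundy and combine the pointwise estimate $\int_0^1 G_{t-s}(x,y)^2\,\d y\lesssim (t-s)^{-1/2}$ with the elementary beta-type bound $\int_0^1 w^{-1/2}(1-w)^{-1/2}\e^{-2\beta t w}\,\d w\lesssim(\beta t)^{-1/2}+\e^{-\beta t}$, valid uniformly in $t>0$ for $\beta\ge 1$, to obtain after tracking the weights $s^{-1/2}\e^{2\beta s}$
\[
    \cN_{2,\frac14,\beta,T}(J_\sigma^u-J_\sigma^{\tilde u}) \lesssim \frac{\lip(\sigma)}{\beta^{1/4}}\,\cN_{2,\frac14,\beta,T}(v).
\]
Adding the two estimates and taking $\beta$ large yields the announced contraction.

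Finiteness of $\cN_{2,\frac14,\beta,T}(v)$ follows by the triangle inequality from $\cM_T(u)+\cM_T(\tilde u)<\infty$ together with the trivial bound $\cN_{2,\frac14,\beta,T}(\,\cdot\,)\le\cM_T(\,\cdot\,)$: the first summand is finite via Theorem \ref{thm_lip} (which gives $\sup_{t\in(0,T]}\cN_{2,\frac14,4AK_b,t}(u)<\infty$ and hence $\cM_T(u)<\infty$), and the second by hypothesis. Because $T>0$ is arbitrary, one deduces $\|u(t,x)-\tilde u(t,x)\|_2=0$ for every $(t,x)\in\sD(\infty)$, and therefore $\tilde u$ is a modification of $u$.

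The main obstacle is the stochastic piece, where one must bypass the packaged estimate of Lemma \ref{lem:N(I)} (which demands deterministic boundedness) and redo BDG by hand, simultaneously accounting for the $s^{-1/4}$ singularity inherited from $L^2$ initial data and the exponential weight $\e^{\beta s}$. The resulting decay is only $\beta^{-1/4}$, not $\beta^{-1/2}$, but since $\lip(\sigma)$ is a fixed constant this suffices to close the contraction for sufficiently large $\beta$; in particular, the logarithmic singularity $L_b\log_+(1/s)$ in the Lipschitz bound for $b(s,\cdot)$ causes no trouble, being neutralized by the $\beta^{-1}\log\beta$ factor supplied by Lemma \ref{lem:rv}.
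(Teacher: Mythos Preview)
Your proposal is correct and follows the same contraction strategy the paper sketches: show $\cN_{2,\frac14,\beta,T}(u-\tilde u)\le\tfrac12\,\cN_{2,\frac14,\beta,T}(u-\tilde u)$ for suitable $\beta$, invoke finiteness via $\cN_{2,\frac14,\beta,T}\le\cM_T$ and the hypothesis $\cM_T(\tilde u)<\infty$ together with Theorem~\ref{thm_lip}, and conclude. You are in fact more explicit than the paper --- which merely says ``repeat the argument leading to \eqref{N(U-U)}'' and fixes $\beta$ as in \eqref{beta} --- in redoing BDG for the stochastic piece (since Lemma~\ref{lem:N(I)} does not apply to $\sigma(s,u)-\sigma(s,\tilde u)$) and in letting $\beta$ depend also on $\lip(\sigma)$; the paper carries out essentially the same BDG computation only later, in the proof of Lemma~\ref{lem:stability}.
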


\begin{proof}[Sketch of proof.]
	Since $\cN_{2,\frac14,\beta,t}(\tilde{u})\le \cM_t(\tilde{u})<\infty$
	for all $t>0$, we can repeat the same argument that led to \eqref{N(U-U)} in order to see that,
	for the same $\beta$ as in \eqref{beta},
	\[
		\cN_{2,\frac14,\beta,t}(u-\tilde{u})\le
		\tfrac12\cN_{2,\frac14,\beta,t}(u-\tilde{u})
		\qquad\forall t>0,
	\]
	whence $\cN_{2,\frac14,\beta,t}(u-\tilde{u})=0$ for every $t>0$. 
	This proves the lemma.
\end{proof}

\subsection{Stability}\label{sec:stability}
If $u_0,\tilde{u}_0\in L^2[0\,,1]$ then Theorem \ref{thm_lip}
assures us of the existence of a solution $u$ to \eqref{SHE} with initial data $u_0$,
and also of a solution $\tilde{u}$ to \eqref{SHE} with initial data $\tilde{u}_0$.
\emph{Stability} is the assertion that $u(t\,,x)\approx \tilde{u}(t\,,x)$
for every $(t\,,x)$
when $u_0\approx \tilde{u}_0$. The following
is a stability type statement that is modest and can be improved upon, but
is good enough for our purposes.
Recall the notation in \eqref{M}, and also that Assumption \ref{cond-dif2}
and \eqref{cond:K} are assumed to hold in this section. 

\begin{lemma}\label{lem:stability}
	For every $T>0$ there exists a number $c=c(T,b\,,\sigma)>0$,
	whose value does not depend on  $(u_0\,,\tilde{u}_0)$,
	such that $\cM_T( u - \tilde{u}) \le c\|u_0-\tilde{u}_0\|_{L^2[0,1]}.$
\end{lemma}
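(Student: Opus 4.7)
The plan is to set $D = u - \tilde u$, write $D$ in mild form, and estimate its $\cN_{2,\frac14,\beta,T}$ norm (as defined in \eqref{N}) by a contraction argument in $\beta$. Subtracting \eqref{mild_SHE} written for $u$ and for $\tilde u$, we get
\begin{equation*}
	D(t\,,x) = (\cG_t(u_0-\tilde u_0))(x) + \tilde I(t\,,x) + \tilde J(t\,,x),
\end{equation*}
where $\tilde I(t\,,x) = \int_{\sD(t)} G_{t-s}(x\,,y)[b(s\,,u(s\,,y))-b(s\,,\tilde u(s\,,y))]\,\d s\,\d y$ and $\tilde J$ is the analogous Walsh integral with $\sigma$ in place of $b$. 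The first step is to observe, as in \eqref{N(U0)}, that $|(\cG_t(u_0-\tilde u_0))(x)| \le t^{-1/4}\|u_0-\tilde u_0\|_{L^2[0,1]}$, so this term contributes at most $\|u_0-\tilde u_0\|_{L^2[0,1]}$ to $\cN_{2,\frac14,\beta,T}(D)$.

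Next I would estimate the drift term. Since $\lip(b(s))\le K_b+L_b\log_+(1/s)$, Minkowski's inequality and Lemma \ref{lem:G} give
\begin{equation*}
	\|\tilde I(t\,,x)\|_2 \le \cN_{2,\frac14,\beta,T}(D) \int_0^t s^{-1/4}\e^{\beta s}(K_b+L_b\log_+(1/s))\,\d s,
\end{equation*}
and Lemma \ref{lem:rv} (exactly as in the argument leading to \eqref{N(Ibn)}) yields
\begin{equation*}
	\cN_{2,\frac14,\beta,T}(\tilde I) \lesssim \left[\frac{K_b}{\beta}+\frac{L_b\log\beta}{\beta}\right]\cN_{2,\frac14,\beta,T}(D),
\end{equation*}
uniformly for $\beta\ge\e$. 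For the stochastic term I would apply the It\^o isometry ($k=2$ BDG) followed by $\int_0^1 G_{t-s}(x\,,y)^2\,\d y\lesssim (t-s)^{-1/2}$ and the Lipschitz property of $\sigma$ to obtain
\begin{equation*}
	\|\tilde J(t\,,x)\|_2^2 \le [\lip(\sigma)]^2\,[\cN_{2,\frac14,\beta,T}(D)]^2\int_0^t (t-s)^{-1/2}s^{-1/2}\e^{2\beta s}\,\d s.
\end{equation*}
A beta-function-type estimate with a rescaling $s = tr$ (dominated by the mass near $r=1$) shows that the last integral is $\lesssim \beta^{-1/2}\e^{2\beta t}$; multiplying by $t^{1/2}\e^{-2\beta t}$ and taking the square root gives
\begin{equation*}
	\cN_{2,\frac14,\beta,T}(\tilde J) \lesssim \frac{\lip(\sigma)}{\beta^{1/4}}\,\cN_{2,\frac14,\beta,T}(D).
\end{equation*}

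Combining the three bounds and noting that $\cN_{2,\frac14,\beta,T}(D)\le \cM_T(D)\le \cM_T(u)+\cM_T(\tilde u)$ is finite (by Theorem \ref{thm_lip} and the hypothesis in Lemma \ref{lem:unique}), I choose $\beta = \beta(b\,,\sigma)$ large enough that the total coefficient of $\cN_{2,\frac14,\beta,T}(D)$ on the right is at most $\tfrac12$. Absorbing yields $\cN_{2,\frac14,\beta,T}(D) \le 2\|u_0-\tilde u_0\|_{L^2[0,1]}$, and then the elementary identity $\cM_T(D) \le \e^{\beta T}\cN_{2,\frac14,\beta,T}(D)$ gives $\cM_T(D) \le 2\e^{\beta T}\|u_0-\tilde u_0\|_{L^2[0,1]}$, which is the claim with $c = 2\e^{\beta T}$.

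The one step I expect to require some care is the stochastic convolution estimate: the results of \S5 are stated only for pathwise-bounded integrands and cannot be invoked as black boxes here, so the beta-integral $\int_0^t (t-s)^{-1/2}s^{-1/2}\e^{2\beta s}\,\d s \lesssim \beta^{-1/2}\e^{2\beta t}$ must be established by hand to ensure that the coefficient of $\cN_{2,\frac14,\beta,T}(D)$ coming from $\tilde J$ genuinely decays in $\beta$; only then does the contraction close. Every other step is a direct adaptation of bounds already developed in the proofs of Theorem \ref{thm_lip} and Lemma \ref{lem:unique}.
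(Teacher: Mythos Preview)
Your proof is correct and follows essentially the same contraction-in-$\beta$ strategy as the paper's own proof (the paper uses Lemma~\ref{lem:elem} rather than Lemma~\ref{lem:rv} for the drift and arrives at $(T/\beta)^{1/4}$ rather than $\beta^{-1/4}$ for the stochastic term, but these are cosmetic differences). One small correction to the step you flagged: the beta integral is $\lesssim (\beta t)^{-1/2}\e^{2\beta t}$ rather than $\beta^{-1/2}\e^{2\beta t}$ (test $t=1/\beta$), but after multiplying by $t^{1/2}\e^{-2\beta t}$ and taking square roots this still yields exactly your claimed $\beta^{-1/4}$ coefficient, so the contraction closes as you say.
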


\begin{proof}
	We may write $u$ via \eqref{mild_SHE} and \eqref{Ib:Js}. 
	Similarly, we can represent $\tilde{u}$ as follows:
	\[
		\tilde{u}(t\,,x) = ( \cG_t\tilde{u}_0)(x) + \tilde{I}_b(t\,,x) 
		+ \tilde{J}_\sigma(t\,,x)\quad\text{ a.s.},
	\]
	where, $\{\cG_t\}_{t\ge0}$ denotes the heat semigroup, and
	\begin{align*}
		\tilde{I}_b(t\,,x) & = \int_{\sD(t)} G_{t-s}(x\,,y)
			b(s\,,\tilde{u}(s\,,y))\,\d s\,\d y,\\
		\tilde{J}_\sigma(t\,,x) &  = \int_{\sD(t)} G_{t-s}(x\,,y)
			\sigma(s\,,\tilde{u}(s\,,y))\,W(\d s\,\d y).
	\end{align*}
	In this way, we have the natural decomposition,
	\[
		\cN_{k,\frac14,\beta,T} (u - \tilde{u})  \le Q_1 + Q_2 + Q_3,
	\]
	for every $k\ge1$ and $\beta,T>0$, where
	\begin{gather*}
		Q_1 = \sup_{t\in(0,T]}\sup_{x\in[0,1]}\left[
			t^{1/4}\e^{-\beta t}| ( \cG_t u_0)(x) - 
			( \cG_t\tilde{u}_0)(x)|\right],\\
		Q_2 = \cN_{2,\frac14,\beta,T} (I_b - \tilde{I}_b),\qquad
		 Q_3 = \cN_{2,\frac14,\beta,T} (J_\sigma - \tilde{J}_\sigma).
	\end{gather*}
	We estimate $Q_1,Q_2,Q_3$ in turn.
	
	Thanks to the Cauchy-Schwarz inequality and \eqref{G_tf},
	\begin{equation}\label{Q1:s}\begin{split}
		Q_1 &\le \sup_{t\in(0,T]}\sup_{x\in[0,1]}\left[
			t^{1/4}\e^{-\beta t}\|G_t\|_{L^2[0,1]}\right]
			\|u_0-\tilde{u}_0\|_{L^2[0,1]}\\
		&\le\|u_0-\tilde{u}_0\|_{L^2[0,1]},
	\end{split}\end{equation}
	since $\|G_t\|_{L^2[0,1]}\le t^{-1/4}$ [Lemma \ref{lem:G}] and $\exp(-\beta t)\le1$.
	This is the desired estimate for $Q_1$.
	
	Now we work toward bounding $Q_2$. For all $t,\beta>0$ and $x\in[0\,,1]$,
	\begin{align*}
		&\| I_b(t\,,x) - \tilde{I}_b(t\,,x) \|_2 \le \int_{\sD(t)}
			G_{t-s}(x\,,y)\lip(b(s))\| u(s\,,y)-\tilde{u}(s\,,y)\|_2\,\d s\,\d y\\
		&\lesssim \int_{\sD(t)} G_{t-s}(x\,,y) \log_+(1/s)\| u(s\,,y)-\tilde{u}(s\,,y)\|_2\,\d s\,\d y
			\quad\text{[Assumption \ref{cond-dif2}]}\\
		&\le\cN_{2,\frac14,\beta,t}(u-\tilde{u}) \int_0^t s^{-1/4} \e^{\beta s} \log_+(1/s) \,\d s,
	\end{align*}
	where the implied constant does not depend on $(t\,,x)$. Therefore,
	we multiply both sides by $t^{1/4}\exp(-\beta t)$ and optimize over $(t\,,x\,,\beta)$ in order
	to deduce from Lemma \ref{lem:elem} [with $\delta=1/2$] that, uniformly for all $\beta,T>0$,
	\begin{equation}\label{Q2:s}
		Q_2 \lesssim \beta^{-1/2} \cN_{2,\frac14,\beta,T}(u-\tilde{u}).
	\end{equation}
	
	Finally, we can apply the Walsh isometry for stochastic integrals 
	in order to see that, for every $t,\beta>0$ and $x\in[0\,,1]$,
	\begin{align*}
		&\| J_\sigma(t\,,x) - \tilde{J}_\sigma(t\,,x) \|_2^2 \le 
			[\lip(\sigma)]^2 \int_{\sD(t)}
			[G_{t-s}(x\,,y)  ]^2 \| u(s\,,y)-\tilde{u}(s\,,y)\|_2^2\,\d s\,\d y\\
		&\lesssim [\cN_{2,\frac14,\beta,t}(u-\tilde{u})]^2
			\int_0^t s^{-1/2}\e^{2\beta s}\,\d s\int_0^1\d y\ |G_{t-s}(x\,,y)|^2\\
		&\le [\cN_{2,\frac14,\beta,t}(u-\tilde{u})]^2
			\int_0^t  s^{-1/2} (t-s)^{-1/2} \e^{2\beta s} \,\d s
			\quad\text{[Lemma \ref{lem:G}]}\\
		&= \e^{2\beta t} [\cN_{2,\frac14,\beta,t}(u-\tilde{u})]^2
			\int_0^1 r^{-1/2} (1-r)^{-1/2} \e^{-2\beta t(1-r)}\,\d r,
	\end{align*}
	where the implied constant is independent of $(t\,,x\,,\beta)$. Since
	$\exp(-x)\lesssim x^{-1/4}$ uniformly for all $x>0$, it follows from
	the above that
	\[
		\| J_\sigma(t\,,x) - \tilde{J}_\sigma(t\,,x) \|_2
		\lesssim (\beta t)^{-1/8}\e^{\beta t} \cN_{2,\frac14,\beta,t}(u-\tilde{u}),
	\]
	Multiply both sides by $t^{1/4}$ and optimize over $(t\,,x)$ in order to see that
	\begin{equation}\label{Q3:s}
		Q_3 \lesssim (T/\beta)^{1/4}\cN_{2,\frac14,\beta,T}(u-\tilde{u}),
	\end{equation}
	uniformly for all $\beta>0$. 
 
 	Finally, combine \eqref{Q1:s}, \eqref{Q2:s}, and \eqref{Q3:s} in order to see that
	\[
		\cN_{2,\frac14,\beta,T}(u-\tilde{u}) \le \|u_0-\tilde{u}_0\|_{L^2[0,1]}
		+ c\beta^{-1/4}\
		\cN_{2,\frac14,\beta,T}(u-\tilde{u}),
	\]
	where $c=c(T,b\,,\sigma) $, independently of the value of $\beta>0$.
	This yields 
	\[
		\cN_{2,\frac14,16c,T}(u-\tilde{u}) \le 2\|u_0-\tilde{u}_0\|_{L^2[0,1]},
	\]
	which in turn implies that $\cM_T(u-\tilde{u})\le 2\exp(16cT)\|u_0-\tilde{u}_0\|_{L^2[0,1]}$,
	as desired.
\end{proof}

\begin{corollary}\label{cor:comparison}
	If $u_0,\tilde{u}_0\in L^2[0\,,1]$ satisfy
	$u_0 \le \tilde{u}_0$, then $\P\{u\le\tilde{u}\}=1$.
\end{corollary}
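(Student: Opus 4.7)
The plan is to establish the corollary by an approximation argument in which the stability result of Lemma \ref{lem:stability} does the heavy lifting. The idea is that we already have a comparison principle on the level of classical (bounded initial data) mild solutions, and stability lets us lift it to arbitrary $L^2$ initial data.

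\textbf{Step 1 (approximating initial data while preserving order).} Let $\phi_n(z) = (z\wedge n)\vee(-n)$ and set $u_0^{(n)} = \phi_n\circ u_0$ and $\tilde u_0^{(n)} = \phi_n\circ\tilde u_0$. Since $\phi_n$ is nondecreasing, $u_0^{(n)}\le \tilde u_0^{(n)}$ pointwise. Both sequences lie in $L^\infty[0\,,1]\subset L^2[0\,,1]$ and converge to $u_0$ and $\tilde u_0$ in $L^2[0\,,1]$ by dominated convergence. Let $u^{(n)}$ and $\tilde u^{(n)}$ be the corresponding solutions to \eqref{SHE} provided by Theorem \ref{thm_lip}.

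\textbf{Step 2 (comparison for bounded initial data).} For bounded initial data and coefficients whose Lipschitz constant in the spatial variable is finite on any $[\varepsilon\,,T]$ (which is guaranteed by Assumption \ref{cond-dif2} via $\lip(b(t)) \le K_b + L_b\log_+(1/t)$), the solution on $[\varepsilon\,,T]\times[0\,,1]$ is the standard mild solution to a Lipschitz SPDE, and the classical SPDE comparison principle applies (cf.\ Mueller, or Donati-Martin and Pardoux, or Geiss-Manthey). More precisely, one freezes initial values at time $\varepsilon$ to $u^{(n)}(\varepsilon)$ and $\tilde u^{(n)}(\varepsilon)$ (which are bounded and satisfy $u^{(n)}(\varepsilon)\le \tilde u^{(n)}(\varepsilon)$ a.s., as follows from iterating the comparison on a finer subdivision and passing to the limit $\varepsilon\downarrow 0$ using the continuity of solutions at times $t>0$), and applies the classical comparison on $[\varepsilon\,,T]$. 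Letting $\varepsilon\downarrow 0$ yields $u^{(n)}(t\,,x)\le \tilde u^{(n)}(t\,,x)$ a.s.\ for every $(t\,,x)\in\sD(\infty)$.

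\textbf{Step 3 (passing to the limit via stability).} Fix $T>0$. Lemma \ref{lem:stability} gives a constant $c=c(T,b,\sigma)$ independent of the initial data such that
\[
\cM_T\bigl(u^{(n)}-u\bigr) \le c\,\|u_0^{(n)}-u_0\|_{L^2[0,1]} \to 0,\qquad
\cM_T\bigl(\tilde u^{(n)}-\tilde u\bigr) \le c\,\|\tilde u_0^{(n)}-\tilde u_0\|_{L^2[0,1]} \to 0.
\]
By the definition \eqref{M} of $\cM_T$, this implies that for every fixed $(t\,,x)\in\sD(T)$, $u^{(n)}(t\,,x)\to u(t\,,x)$ and $\tilde u^{(n)}(t\,,x)\to \tilde u(t\,,x)$ in $L^2(\Omega)$. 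Extracting an a.s.-convergent subsequence and invoking the pointwise inequality of Step 2 gives $\P\{u(t\,,x)\le \tilde u(t\,,x)\}=1$ for each such $(t\,,x)$. By Proposition \ref{pr:lip2}, both $u$ and $\tilde u$ are a.s.\ continuous on $(0\,,\infty)\times[0\,,1]$, so restricting to a countable dense subset of $\sD(T)$ and then taking $T\to\infty$ upgrades this to $\P\{u(t\,,x)\le \tilde u(t\,,x)\ \forall (t\,,x)\in\sD(\infty)\}=1$.

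The main obstacle is Step 2: one must invoke a comparison theorem in a regime where the spatial Lipschitz constant of $b$ degenerates as $t\downarrow 0$. The way around is to exploit that for bounded initial data one can work on time intervals $[\varepsilon\,,T]$ on which $\lip(b(t))\le K_b+L_b\log(1/\varepsilon)$ is finite, apply the known comparison principle there, and then let $\varepsilon\downarrow 0$ using continuity of the two solutions together with the uniqueness statement of Lemma \ref{lem:unique}.
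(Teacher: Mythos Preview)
Your overall strategy matches the paper's: approximate $u_0,\tilde u_0$ by bounded ordered data, invoke a known comparison theorem at the approximate level, and pass to the limit via Lemma~\ref{lem:stability} and continuity. The paper regularizes with the heat semigroup, $u_{0,\varepsilon}=\cG_\varepsilon u_0$, rather than your truncation $\phi_n\circ u_0$; either choice preserves order and converges in $L^2[0,1]$, so this difference is cosmetic.

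The substantive gap is your Step~2. To run classical comparison on $[\varepsilon,T]$ you need $u^{(n)}(\varepsilon)\le\tilde u^{(n)}(\varepsilon)$ a.s., and you justify this by ``iterating the comparison on a finer subdivision and passing to the limit $\varepsilon\downarrow0$''. That is circular: any subdivision still contains an interval touching $t=0$, on which $\lip(b(t))$ is unbounded, so you are assuming the very ordering you are trying to establish. The paper avoids this loop entirely by citing the comparison theorem of Gei\ss\ and Manthey, which is stated for time-dependent coefficients and applies directly to the approximating SPDE on all of $(0,\infty)$ once the initial data are bounded; no $[\varepsilon,T]$ slicing is needed. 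If you prefer to stay with the $\varepsilon$-slicing idea, you would instead have to redo the comparison argument from scratch with a Gronwall inequality that tolerates the integrable singularity $\int_0^t\log_+(1/s)\,\d s<\infty$, rather than bootstrap from the bounded-Lipschitz case.
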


\begin{proof}
	For every $\varepsilon>0$ and $x\in[0\,,1]$ let 
	\[
		u_{0,\varepsilon}(x) = (\cG_\varepsilon u_0)(x)
		\quad\text{and}\quad
		\tilde{u}_{0,\varepsilon}(x) = (\cG_\varepsilon \tilde{u}_0)(x).
	\]
	Note that $u_{0,\varepsilon}$ and $\tilde{u}_{0,\varepsilon}$ are bounded [Lemma \ref{lem:G}]
	and measurable functions. Let $u_\varepsilon$ and $\tilde{u}_\varepsilon$ 
	respectively denote the solutions to \eqref{SHE} 
	with respective initial data $u_{0,\varepsilon}$ and $\tilde{u}_{0,\varepsilon}$. By the comparison
	theorem of Gei\ss\ and Manthey \cite{GM}, 
	\begin{equation}\label{eq:GM}
		\P\{u_\varepsilon\le \tilde{u}_\varepsilon\}=1.
	\end{equation}
	Thanks to Lemmas \ref{lem:IC} and \ref{lem:stability}, for every $t>0$ fixed,
	$\|u_\varepsilon(t) - \tilde{u}_\varepsilon(t)\|_{L^2[0,1]}\to 0$ in probability as $\varepsilon\to0+$.
	By Fubini's theorem, there exists a Lebesgue-null set $\mathfrak{N}\subset[0\,,1]$ --
	independently of the trajectories of the processes $u_\varepsilon$ and $\tilde{u}_\varepsilon$ --
	such that,
	for every $x\in[0\,,1]\setminus\mathfrak{N}$,
	$u_\varepsilon(t\,,x)\to u(t\,,x)$ in probability as $\varepsilon\to0+$.
	Therefore, \eqref{eq:GM} implies that
	\begin{equation}\label{eq:GM1}
		\P\{u(t\,,x)\le \tilde{u}(t\,,x)\}=1
		\quad\text{for all $t>0$ and $x\in[0\,,1]\setminus\mathfrak{N}$.}
	\end{equation}
	Since $[0\,,1]\setminus\mathfrak{N}$ is necessarily dense in $[0\,,1]$,
	the corollary follows from \eqref{eq:GM1} and the continuity of $u$ and $\tilde{u}$;
	see Theorem \ref{thm_lip}.
\end{proof}

\section{Proof of Theorem \ref{th:main}}

With the technical results of the previous sections under way,
the remainder of the proof is divided into a few steps, the first two of which are
patterned after the stopping-time arguments of
Dalang, Khoshnevisan, and Zhang \cite{DKZ}.

Throughout, we choose and fix a number
$\alpha\in(\frac14\,,1]$. Then, we
define for all functions $b:\R\to\R$,
$N\ge\e$, and $\alpha\in(\frac14\,,1)$, and 
$(t\,,z)\in (0\,,1]\times\R$, a space-time function $b_N:(0\,,\infty)\times\R\to\R$
via
\begin{equation}\label{b_N}
	b_N(t\,,z)=\begin{cases}
		b(N/t^\alpha) \quad &\text{if } \quad z>N/t^\alpha \\
		b(z) \quad &\text{if } \quad \vert z \vert \leq N/t^\alpha\\
		b(-N/t^\alpha) \quad &\text{if } \quad z<-N/t^\alpha.
	\end{cases}
\end{equation}
In principle we should really write $b_{N,\alpha}$ instead of $b_N$.
Because $\alpha$ is fixed, this notational omission should not cause
confusion.

{\bf Step 1.}
Choose and fix some $N\ge\e$.
In the first step of the proof, we shall 
consider the following special case: There exist  $\theta_1, \theta_2\ge0$
such that
\begin{equation}\label{btt}
	b(z)=\theta_1 + \theta_2 |z|\log_+|z|\qquad\forall z\in\R.
\end{equation}
For every $N\ge\e$ and $t>0$,
\begin{align*}
	\partial_z b(z) &= \theta_2 \log_+z + \frac{z}{z+\e}
		\le \theta_2\log_+(N/t^\alpha)+1
		\qquad\forall z\in(0\,,N/t^\alpha)\\
	&\le\theta_2\log_+(N) +1 + \theta_2\log_+(1/t^\alpha),
\end{align*}
where we recall $\alpha\in(\frac14\,,1)$ is held fixed.
Additionally, $\theta_2\log_+(N)+1 \le 2\theta_2\log N+1\le 2(\theta_2+1)\log N$
(with room to spare)
since $N\ge\e$, and $\log_+(1/t^\alpha)\le 2\log_+(1/t)$ since $\alpha<1$.\footnote{%
Indeed, $\log_+(1/t^\alpha)\le\log_+(1/t)$ when $t\le1$ and 
$\log_+(1/t^\alpha)\le\log(1+\e)\le 2\le2\log_+(1/t)$ when $t>1$.}
These observations, and the symmetry of $b$, together show that every $b_N$ 
satisfies Assumption \ref{cond-dif2} with  
\[ 
	M_{b_N} = |b(0)|,\quad
	K_{b_N} = 2(\theta_2+1)\log N,
	\quad\text{and}\quad
	L_{b_N} = 2.
\]
Because $A>1$ is a universal constant -- see Theorem \ref{thm_lip} --
there exists $N_0=N_0(\theta_2) \ge \e$
such that \eqref{cond:K} is equivalent to $N> N_0$. Therefore, we may apply
Theorem \ref{thm_lip} in order to see that, for every $N>N_0$, 
the SPDE \eqref{SHE} -- with $b$ replaced  by  $b_N$ and subject to 
initial data $u_0$ -- has a continuous, random-field solution $u_N$.
Choose and fix some $\alpha\in(\frac14\,,1)$. 
Proposition \ref{pr:lip2} ensures that $u_N$ satisfies
\begin{equation}\label{tight}
	\left\{ \E\left( \| u_N\|_{\sC_1(\alpha,16A(\theta_2+1)\log N)}^k \right)\right\}^{1/k}\lesssim
	(1+\|u_0\|_{L^2[0,1]}) \sqrt k,
\end{equation}
where the implied constant does not depend on $(N\,,k)\in(N_0\,,\infty)\times[1\,,\infty)$.
It is easy to see that \eqref{tight} is equivalent to the assertion that  
the random variable $\| u_N\|_{\sC_1(\alpha,16A(\theta_2+1)\log N)}$ is sub-Gaussian and hence
\begin{equation}\label{tight_1}
	\exists\lambda=\lambda(\|u_0\|_{L^[0,1]})>0:\
	\sup_{N>N_0}
	\E\exp\left( \lambda \| u_N\|_{\sC_1(\alpha,16A(\theta_2+1)\log N)}^2 \right)\le1.
\end{equation}

Next, we define the stopping times
\[
	T_N = \inf\left\{ t\in(0\,,1):\ \|u_N(t)\|_{C[0,1]} > N/t^\alpha \right\}
	\qquad\forall N>N_0.
\]
We may consistently define the random field
\begin{equation}\label{uuN}
	u(t\,,x) = u_N(t\,,x)\quad\forall (t\,,x)\in ( 0\,, T_N ) \times [0\,,1],
\end{equation}
regardless of the value of $N>N_0$, since $b_N(u_N(t))=b(u_N(t))$  for all $t<T_N$.
Proposition \ref{pr:lip2} implies that $u$ is continuous on $(0\,,T_N]\times[0\,,1]$.
Also, 
\[
	T_N = \inf\left\{ t\in(0\,,1):\ \|u(t)\|_{C[0,1]}> N/t^\alpha \right\}
	\qquad\forall N>N_0,
\]
where $\inf\varnothing=\infty$.
Define 
\begin{equation}\label{t0}
	t_0 = \frac{1}{32A(\theta_2+1)},
\end{equation}
and observe that $t_0 < 1/32 < 1$.
Because
\begin{equation}\label{NNNN}
	 \| u\|_{\sC_t(\alpha,16A(\theta_2+1)\log N)} \ge  N^{-1/2}
	 \sup_{s\in(0,t]}\left( s^\alpha \|u(s)\|_{C[0,1]}\right)
	 \qquad\forall t\in(0\,,t_0],
\end{equation}
we can now see that
\[
	\forall t\in(0\,,1),\ N>N_0:\quad
	T_N(\omega) \le t \Rightarrow 
	\| u\|_{\sC_t(\alpha,16A(\theta_2+1)\log N)}(\omega)\ge \sqrt N.
\]
Therefore, \eqref{tight_1} and Chebyshev's inequality together imply that
\begin{equation}\label{TNt}
	\P\{T_N  \le t_0\} \le \exp(-\sqrt N)\quad\forall N>N_0,
\end{equation}
whence $\lim_{N\to\infty}\P\{T_N>t_0\}=1$.
In this way we have proved that $u$ solves \eqref{SHE}, with initial profile $u_0$ and  up to time $t_0$.
This is another way to say that the existence assertion of the theorem is true when $b$
has the form \eqref{btt}, with $t_0$ given by \eqref{t0}.\\

{\bf Step 2.}
We now study the existence of a solution in 
the general case that $(b\,,\sigma)$ satisfies Assumption \ref{cond-dif2}
and $b$ satisfies \eqref{cond:K}. It might help to recall also that the constant $A$ in
\eqref{cond:K} is universal; see Theorem \ref{thm_lip}. Because $|b(z)|=\mathcal{O}(|z|\log|z|)$
as $|z|\to\infty$, we can find $\theta_1,\theta_2\ge0$ such that
\begin{equation}\label{-B<b<B}
	-B \le b\le B\text{ pointwise on $\R$, where }
	B(z)=\theta_1+\theta_2|z|\log_+|z|\quad\forall z\in\R.
\end{equation}
Recall $b_N$ from \eqref{b_N} and define $B_N$ in exactly the same way but replace ``$b$''
by ``$B$'' everywhere in \eqref{b_N}. Let $U_N^\pm$ denote the (continuous) solution to \eqref{SHE}
where $b$ is replaced by $\pm B_N$, and $u_N$ the  (continuous) solution to \eqref{SHE} where $b$
is replaced by $b_N$. The existence and uniqueness of the random fields $u_N$, $U_N^+$,
and $U_N^-$ are ensured by Theorem \ref{thm_lip}. Because of \eqref{-B<b<B} 
and Corollary \ref{cor:comparison}, the following holds almost surely:
\[
	U_N^-(t\,,x) \le u_N(t\,,x) \le U_N^+(t\,,x)\quad\forall t>0,\ x\in[0\,,1].
\]
Because
\[
	\|u_N\|_{\sC_1(\alpha,16A(\theta_2+1))\log N}\le
	\|U_N^-\|_{\sC_1(\alpha,16A(\theta_2+1))\log N}+
	\|U_N^+\|_{\sC_1(\alpha,16A(\theta_2+1))\log N},
\]
and the last two $\sC_1$-norms have moments that are bounded uniformly in $N$ (see Step 1),
the present choice of $u_N$ also satisfies \eqref{tight}. Now we may reapply the argument
of Step 1 to finish the short-time existence of $u$, which can be defined exactly as in \eqref{uuN}.
This completes the short-time existence of a solution $u$ in Theorem \ref{th:main}. It remains
to verify that this solution satisfies properties (1)--(3) of the theorem, and that it has the
said uniqueness property.\\

\textbf{Step 3.} Let $t_0$ be any fixed and nonrandom time by which $u$ solves 
\eqref{SHE}. In Step 1, we gave a formula for $t_0$ in the case that $b$ is of the
form \eqref{btt}. The argument of Step 2 also yields a formula (the time
$t_0$ for $u$ can be selected as the minimum of the two $t_0$s for 
$U^\pm=\lim_{N\to\infty}U_N^\pm$). Because $u$ is a mild solution to \eqref{SHE},
we can reuse the same moment methods that yielded continuity in Step 1 (that is
when $b$ has the form \eqref{btt}) in order
to derive the continuity of $u$ in the present general case from the Kolmogorov continuity
theorem; see \eqref{Ib-Ib:x}, \eqref{Ib-Ib:t}, \eqref{Js-Js:x}, \eqref{Js-Js:t} (the deterministic
term in \eqref{mild_SHE} is always continuous for $(t\,,x)\in(0\,,\infty)\times[0\,,1]$
thanks to the dominated convergence theorem). Assertion (1) of Theorem \ref{th:main} follows.\\

\textbf{Step 4.}
Next we prove Assertion (2) of theorem. Thanks to Step 2, it suffices to prove
(2) in the case that $b$ has the form \eqref{btt}, which we assume 
to be the case for the remainder of Step 4.
Choose and fix some $\alpha\in(\frac14\,,1)$.
Thanks to Fatou's lemma, \eqref{tight_1}, and the estimation of $u$ by 
$U^\pm =\lim_{N\to\infty} U_N^\pm$ in Step 2, there exists
$\lambda=\lambda(u_0)\in(0\,,1)$ such that
\[
	\E\exp\left( \lambda \|u\|_{\sC_1(\alpha,16A(\theta_2+1)\log N)}^2;\
	T_N>t_0\right)\le 1.
\]
Therefore, \eqref{t0}, \eqref{TNt}, and the preceding together yield the following: 
\[
	\P\left\{ \|u\|_{\sC_{t_0} (\alpha,\frac12\log N)} \ge q \right\}
	\le \e^{-\lambda q^2} + \e^{-\sqrt N}\qquad\forall q>0, N>N_0.
\]
Because of \eqref{NNNN}, this yields
\[
	\P\left\{ \sup_{s\in(0,t_0]}(s^\alpha\|u(s)\|_{C[0,1]}) \ge q\sqrt N \right\}
	\le \e^{-\lambda q^2} + \e^{-\sqrt N}\qquad\forall q>0, N>N_0.
\]
We now choose $q=N^{1/4}$ in order to see that $\sup_{s\in(0,t_0]}(s^\alpha\|u(s)\|_{C[0,1]})^{2/3}$
has exponential moments and in particular 
$\limsup_{t\to 0+} t^\alpha \|u(t)\|_{C[0,1]}<\infty$ a.s.
This shows that 
$t^\beta \|u(t)\|_{C[0,1]}\to 0$ as $t\downarrow0$
whenever $\beta\in(\alpha\,,1)$. The latter fact completes
the proof of Step 4 since, by choosing $\alpha$ sufficiently close to $\frac14$,
we can ensure that $\beta$ can be any number in $(\frac14\,,1)$.\\

\textbf{Step 5.} We now prove assertion (3) of Theorem \ref{th:main}; that is, 
\[
	\int_0^1|u(t\,,x)-u_0(x)|^2\,\d x\xrightarrow[(t\to0+)]{\P\,}0.
\]
Thanks to the stopping-time argument of Step 2, it suffices to prove the result
when $b$ has the form \eqref{btt} and, for the same $N_0$ that appeared in Step 1,
\[
	\exists N>N_0:\quad \lim_{t\to0+} \E\int_0^1|u_N(t\,,x)-u_0(x)|^2\,\d x=0.
\]
Choose and fix a non random $N>N_0$.
We can write $u_N(t)=\cG_t u_0 + I_{b,N} + J_{\sigma,N}$, in parallel with
\eqref{mild_SHE}. We have already observed in Lemma \ref{lem:IC} that
$\cG_t u_0\to u_0$ in $L^2[0\,,1]$ as $t\to0+$. It remains to prove
that $\|I_{b,N}(t)\|_{L^2([0,1]\times\Omega)}\to0$ and
$\|J_{\sigma,N}(t)\|_{L^2([0,1]\times\Omega)}\to0$, both 
as $t\to0+$. We start with a proof of
the latter. By the Walsh isometry for stochastic integrals, and thanks to the boundedness
assumption on $\sigma$ and Lemma \ref{G},
\[
	\|J_{\sigma,N}(t)\|_{L^2([0,1]\times\Omega)}^2=\E\left( \|J_{\sigma,N}(t) \|_{L^2[0,1]}^2\right)
	\le M_\sigma^2\int_0^t\d s\int_0^1\d y\ |G_{t-s}(x\,,y)|^2
	\lesssim\sqrt t,
\]
for all $t>0$. Clearly, the preceding tends to zero as $t\to0+$. Furthermore,
we may apply the triangle inequality to find that, uniformly for all $t\in(0\,,1/\e)$ [say],
\begin{align*}
	&\|I_{b,N}(t) \|_{L^2([0,1]\times\Omega)}\le\int_0^t\d s\int_0^1\d y\
		\left(\int_0^1\d x\ |G_{t-s}(x\,,y)|^2\right)^{1/2} \|b_N(u_N(s\,,y))\|_2\\
	&\le2K_b\int_0^t \log_+(1/s) \,\d s\int_0^1\d y\
		(t-s)^{-1/4} \|u_N(s\,,y)\|_2\\
	&\lesssim \int_0^t s^{-1/4}
		(t-s)^{-1/4}\log_+(1/s) \,\d s \lesssim t^{1/2}\log(1/t),
\end{align*}
thanks to the \emph{a priori} estimate in Theorem \ref{thm_lip}.
This establishes part (3) of Theorem \ref{th:main}. It remains to verify the
uniqueness portion of the theorem. \\

\textbf{Step 6.}
We conclude the proof by establishing the promised uniqueness
statement of the theorem.
Let $v$ denote another continuous random-field solution on the time interval $(0\,,t_0]$
such that $\sup_{s\in(0,t_0]}(s^\alpha\|v(s)\|_{C[0,1]})<\infty$ a.s.\ for some $\alpha>\frac14$.
Define
\[
	S_N=\inf\{s\in(0\,,t_0]:\, \|v(s)\|_{C[0,1]}>N/s^\alpha\}
	\qquad\forall N\ge1,
\]
where $\inf\varnothing=\infty$, and note that
\begin{equation}\label{SNt}
	\P\{S_N <\infty\} = \P\left\{
	\sup_{s\in(0,t_0]}(s^\alpha\|v(s)\|_{C[0,1]})\ge N\right\} \to0\quad\text{as $N\to\infty$}.
\end{equation}
The uniqueness of $u_N$ (Lemma \ref{lem:unique}) ensures that
$u(s)= u_N(s)=v(s)$ for all $s\in(0\,,t_0\wedge S_N\wedge T_N)$. Thanks to 
\eqref{TNt} and \eqref{SNt}, $u(s)=v(s)$ for all $s\in(0\,,t_0)$ a.s.
This completes the proof of Theorem \ref{th:main}.
\qed





\bibliography{L201}

\end{document}